\theoremstyle{definition}
\newtheorem{theorem}{{Theorem}}[subsection]
\newtheorem{lemma}[theorem]{Lemma}
\newtheorem{proposition}[theorem]{Proposition}
\newtheorem{corollary}[theorem]{Corollary}
\newtheorem{notation}[theorem]{Notation}
\newtheorem{remark}[theorem]{Remark}
\newtheorem{definition}[theorem]{Definition}
\numberwithin{equation}{subsection}
\numberwithin{theorem}{subsection}
\newcommand*\colvec[1]{
        \global\colveccount#1
        \begin{pmatrix}
        \colvecnext
}
\def\colvecnext#1{
        #1
        \global\advance\colveccount-1
        \ifnum\colveccount>0
                \\
                \expandafter\colvecnext
        \else
                \end{pmatrix}
        \fi
}
\newcommand{\sA}{\mathsf{A}}
\newcommand{\sH}{\mathsf{H}}
\newcommand{\sP}{\mathsf{P}}
\newcommand{\sK}{\mathsf{K}}
\newcommand{\sZ}{\mathsf{Z}}
\newcommand{\sX}{\mathsf{X}}
\newcommand{\sY}{\mathsf{Y}}
\newcommand{\sT}{\mathsf{T}}
\newcommand{\sG}{\mathsf{G}}
\newcommand{\sV}{\mathsf{V}}
\newcommand{\sM}{\mathsf{M}}
\newcommand{\sR}{\mathsf{R}}
\newcommand{\sN}{\mathsf{N}}
\newcommand{\sS}{\mathsf{S}}
\newcommand{\sB}{\mathsf{B}}
\newcommand{\sD}{\mathsf{D}}
\newcommand{\sGL}{\mathsf{GL}}
\newcommand{\sSL}{\mathsf{SL}}
\newcommand{\sPSL}{\mathsf{PSL}}
\newcommand{\sSO}{\mathsf{SO}}
\newcommand{\cA}{\mathcal{A}}
\newcommand{\cX}{\mathcal{X}}
\newcommand{\cP}{\mathcal{P}}
\newcommand{\cH}{\mathcal{H}}
\newcommand{\cR}{\mathcal{R}}
\newcommand{\cS}{\mathcal{S}}
\newcommand{\cM}{\mathcal{M}}
\newcommand{\cE}{\mathcal{E}}
\newcommand{\tA}{\mathtt{A}}
\newcommand{\tAd}{\mathtt{Ad}}
\newcommand{\tad}{\mathtt{ad}}
\newcommand{\tC}{\mathtt{C}}
\newcommand{\tB}{\mathtt{B}}
\newcommand{\tF}{\mathtt{F}}
\newcommand{\tL}{\mathtt{L}}
\newcommand{\tT}{\mathtt{T}}
\newcommand{\tI}{\mathtt{I}}
\newcommand{\tR}{\mathtt{R}}
\newcommand{\tdR}{\dot{\mathtt{R}}}
\newcommand{\ttr}{\mathtt{tr}}
\newcommand{\texp}{\mathtt{exp}}
\newcommand{\tJd}{\mathtt{Jd}}
\newcommand{\tM}{\mathtt{M}}
\newcommand{\tP}{\mathtt{P}}
\newcommand{\tth}{\mathtt{h}}
\newcommand{\te}{\mathtt{e}}
\newcommand{\tu}{\mathtt{u}}
\newcommand{\N}{\mathbb{N}}
\newcommand{\R}{\mathbb{R}}
\newcommand{\C}{\mathbb{C}}
\newcommand{\sHom}{\mathsf{Hom}}
\newcommand{\flow}{\mathsf{U}\Gamma}
\newcommand{\cflow}{\widetilde{\mathsf{U}_0\Gamma}}
\newcommand{\bdry}{\partial_\infty\Gamma}
\newcommand{\fg}{\mathfrak{g}}
\newcommand{\fa}{\mathfrak{a}}
\newcommand{\fk}{\mathfrak{k}}
\newcommand{\fp}{\mathfrak{p}}
\newcommand{\fn}{\mathfrak{n}}
\newcommand{\fm}{\mathfrak{m}}
\newcommand{\GV}{\sG\ltimes\sV}
\newcommand\irr{\operatorname{irr}}
\newcommand\mdeg{\operatorname{mdeg}}
\title[Finite step Rigidity]{Finite step Rigidity of Hitchin representations and special Margulis-Smilga spacetimes}
\author{Sourav Ghosh}
\address{Ashoka University, Rajiv Gandhi Education City, Sonipat, Haryana 131029, India}
\email{sourav.ghosh@ashoka.edu.in, sourav.ghosh.bagui@gmail.com}
\date{\today}
\thanks{The author acknowledge support from the following grants: OPEN/16/11405402 and annual research grant of Ashoka University}
\begin{document}

\begin{abstract}
In this article we use the ``escape from subvarieties lemma" introduced by Eskin--Mozes--Oh to prove finite step rigidity results for the Jordan-Lyapunov projection spectra of Hitchin representations and the Margulis-Smilga invariant spectra of some special Margulis-Smilga spacetimes. In the process, we also prove a similar finite step rigidity result for the Cartan spectra of representations of a finitely generated group inside a connected semisimple real algebraic Lie group with  trivial center.
\end{abstract}

\maketitle
\tableofcontents

\newpage

%%%%%%%%%%%%%%%%%%%%%%%%%%%%%%%%%%%%%%%%%%%%%%%%%%%%%%%%%%%%%%%%%%%%%%%%%%%%%%%%%%%%%%%%%
\section*{Introduction}
%%%%%%%%%%%%%%%%%%%%%%%%%%%%%%%%%%%%%%%%%%%%%%%%%%%%%%%%%%%%%%%%%%%%%%%%%%%%%%%%%%%%%%%%%

Since antiquity Mathematicians are interested in trying to determine geometric objects uniquely using finite data. In this article, we try to do precisely that for geometric objects known as Hichin representations and Margulis-Smilga spacetimes. We show that Hitcin representations of a surface group are determined by the Jordan-Lyapunov projections of the images of a finite number of surface group elements and Margulis-Smilga spacetimes are determined by the knowledge of a finite number of (normed) Margulis-Smilga invariants. In fact, these results can also be used to give nice embeddings of the moduli space of Hitchin representations and the moduli space of Margulis-Smilga spacetimes. We use the ``escape from subvarieties lemma" of Eskin--Mozes--Oh (for more details please see Proposition 3.2 of \cite{EMO} and Lemma 4.2 of \cite{Br}) as the main ingredient to obtain these finite step rigidity results. In the process, we also prove a more general result for representations of a finitely generated group inside a connected semisimple real algebraic Lie group with  trivial center. We show that such representations are uniquely determined by the knowledge of finitely many Cartan projections. Unfortunately, unlike Jordan-Lyapunov projections, Cartan projections are a not a reliable marker of a representation as they are not conjugation invariant. The main novelty of this article is the usage of finite data to distinguish representations. Instead, if one allows checking of infinite data to determine a representation uniquely, then there exist a plethora of important results. 

On one hand, in \cite{DK} Dal'bo--Kim proved that Zariski dense representations of a discrete group inside a semisimple algebraic Lie group without compact factors is determined upto conjugacy if and only if both have the same Jordan-Lyapunov projection spectra. Later, in \cite{BCLS} Bridgeman--Canary--Labourie--Sambarino proved that two projective Anosov representations and in particular two Hitchin representations are conjugate to each other if and only if they have the same spectral radii spectra over all elements. Similar results involving all elements were also obtained by Kim \cite{Kim2} and Cooper--Delp \cite{CDel} in the context of Hilbert length spectra of projective manifolds. Moreover, in a recent work \cite{BCL} Bridgeman--Canary--Labourie show that two Hitchin representations are conjugate to each other if and only if they have the same marked trace spectra over simple non-separating elements. Similar results involving infinite data but only over simple non-separating elements were also obtained by Brideman--Canary \cite{BC} in the context of representations of surface groups inside $\sPSL_2(\C)$ and by Duchin--Leininger--Rafi \cite{DLR} in the context of flat surfaces.

On the other hand, in \cite{CD} Charette--Drumm proved that two Margulis spacetimes coming from proper affine actions on $\R^3$ are conjugate to each other if and only if they have the same Margulis invariant spectra. Later, this result was generalized by Kim \cite{Kim} to include proper affine actions coming from examples of Abels--Margulis--Soifer \cite{AMS}. Recently, Ghosh proved an infinitesimal version of Kim's result in \cite{Ghosh4} to define and study a pressure form on the moduli space and generalized Kim's result in \cite{Ghosh5} to include proper affine actions coming from examples of Smilga \cite{Smilga,Smilga3,Smilga4} where either the linear parts are conjugate to each other or the affine Lie group is obtained from a self-contragredient representation of its semisimple linear part.

In fact, all of these above results can also be seen as successor to the much celebrated result of Otal \cite{Otal} trying to answer if one could ``hear the shape of a drum". Otal proved that two negatively curved closed surfaces have the same marked length spectrum if and only if they are isometric (see also \cite{Croke}).

%%%%%%%%%%%%%%%%%%%%%%%%%%%%%%%%%%%%%%%%%%%%%%%%%%%%%%%%%%%%%%%%%%%%%%%%%%%%%%%%%%%%%%%%%
\subsection{Hitchin representations}
%%%%%%%%%%%%%%%%%%%%%%%%%%%%%%%%%%%%%%%%%%%%%%%%%%%%%%%%%%%%%%%%%%%%%%%%%%%%%%%%%%%%%%%%%

In \cite{Hit}, Hitchin used Higgs bundles techniques to study connected components of the space of reducible representations of a surface group inside $\sPSL(n,\R)$. Extending the results obtained in \cite{Goldman} by Goldman for $n=2$, he shows that for $n>2$ the space of reducible representations has three components when $n$ is odd and six components when $n$ is even. Among these components one component (respectively two components) when $n$ is odd (respectively $n$ is even) are special and topologically trivial. These components behave like Teichm\"uller spaces and are called Hitchin components. Elements of Hitchin components are called Hitchin representations. Later, in \cite{Labourie} Labourie extended results by Choi--Goldman \cite{CG1} and gave a nice geometric characterization of Hitchin representations in terms of Anosov representations. The definition of an Anosov representation introduced by Labourie is dynamical in nature (for more details please see Definition \ref{def.anosov}). Later work done by Kapovich--Leeb--Porti \cite{KLPmain}, Bochi--Potrie--Sambarino \cite{BPS} and Kassel--Potrie \cite{KaPo} imply that Anosov representations with respect to a maximal parabolic subgroup $\sP_i$ are precisely those representations whose elements have an uniform gap between the $i$-th and $(i+1)$-th coordinate of their Jordan-Lyapunov projections. We note that the Jordan-Lyapunov projection of an element is a precise and abstract way of listing the modulus of its eigenvalues in non-increasing order. Hence, Anosov representations with respect to minimal parabolic subgroups are those representations whose elements have an uniform gap between the $i$-th and $(i+1)$-th coordinate of their Jordan-Lyapunov projections for all $i$. In fact, combining works of Labourie \cite{Labourie} and Guichard \cite{Gui1} one deduces that Hitchin representations are precisely those representations which are Anosov with respect to minimal parabolic subgroups and which admit hyperconvex limit maps. 

In this article, we prove that
\begin{theorem}
	Let $\sG$ be a connected real split semisimple algebraic Lie group with trivial center and without compact factors. Let $\Gamma$ be a finitely generated word hyperbolic group with a finite symmetric generating set $\sS \subset \Gamma$, let $\sB_\sS(n)$ be the set of all elements of $\Gamma$ whose word length with respect to $\sS$ is less than some integer $n$ and let $\lambda$ be the Jordan-Lyapunov projection (for more details please see Definition \ref{def.jordan}). Then, there exists $m\in\N$ depending only on $\sG$ such that any two representations $\rho_1,\rho_2: \Gamma \to \sG$ which satisfy the following:
	\begin{enumerate}
		\item $\rho_i(\Gamma)$ is Zariski dense in $\sG$,
		\item $\rho_i(\Gamma)$ is Anosov with respect to minimal parabolic subgroups,
		\item $\lambda(\rho_1(\gamma))=\lambda(\rho_2(\gamma))$ for all $\gamma \in \sB_\sS(m)$,
	\end{enumerate}
	are equivalent; in other words, there exists an automorphism $\sigma:\sG \to \sG$ such that we have $\sigma \circ \rho_1=\rho_2$.
\end{theorem}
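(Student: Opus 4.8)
The plan is to pass to the product representation, replace the non-algebraic Jordan--Lyapunov condition by a genuine polynomial condition on the loxodromic locus, and then feed a proper subvariety into the escape from subvarieties lemma.

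\textbf{Step 1: the product and its Zariski closure.} I would put $\rho=(\rho_1,\rho_2)\colon\Gamma\to\sG\times\sG$ and let $\sH$ be the Zariski closure of $\rho(\Gamma)$. Both projections $\sH\to\sG$ are surjective by Zariski density of the $\rho_i(\Gamma)$, and since they carry the (normal) unipotent radical and central torus of $\sH$ onto normal unipotent subgroups and normal tori of the semisimple group $\sG$ -- which are trivial -- the group $\sH^{0}$ is semisimple. Fix an embedding $\sG\times\sG\hookrightarrow\sGL_n$ with $n=n(\sG)$; algebraic subgroups of $\sGL_n$ have degree bounded by a constant depending on $n$ only. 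The goal becomes: for an appropriate $m=m(\sG)$, hypotheses (1)--(3) force $\sH$ to be the graph of an automorphism of $\sG$ intertwining $\rho_1$ and $\rho_2$.

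\textbf{Step 2: algebraising the Jordan--Lyapunov projection.} Since $\sG$ is split, the component group $\sM$ of $\sZ_\sG(\sA)$ (with $\sA$ a maximal $\R$-split torus of $\sG$) is an elementary abelian $2$-group depending only on $\sG$, so for every loxodromic $g\in\sG$ the square $g^{2}$ is conjugate to $\exp\!\big(2\lambda(g)\big)$, an element of the connected split torus. Choose representations $(V_1,\pi_1),\dots,(V_k,\pi_k)$ of $\sG$ whose highest weights span $\fa^{\ast}$. Then the eigenvalues of $\pi_j(g^{2})$ are the positive reals $e^{\nu(2\lambda(g))}$ over the weights $\nu$ of $V_j$, and since $\lambda(g)$ is dominant \emph{and regular} the largest of them is $e^{\mu_j(2\lambda(g))}$ with $\mu_j$ the highest weight; hence the characteristic polynomials of $\pi_1(g^{2}),\dots,\pi_k(g^{2})$ determine, and are determined by, $\lambda(g)$. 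By hypothesis (2) and the Anosov property every $\rho_i(\gamma)$ with $\gamma$ of infinite order is loxodromic, whence for such $\gamma$
\[
\lambda(\rho_1(\gamma))=\lambda(\rho_2(\gamma))
\ \Longleftrightarrow\
\rho(\gamma)\in\cM:=\bigl\{(g_1,g_2)\in\sG\times\sG:\ \det(xI-\pi_j(g_1^{2}))=\det(xI-\pi_j(g_2^{2}))\ \text{for all }j\bigr\}.
\]
The set $\cM$ is cut out by finitely many regular functions whose degrees are bounded in terms of $\sG$, so $\cM_\sH:=\cM\cap\sH$ is a subvariety of $\sH$ of degree at most some $D(\sG)$.

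\textbf{Step 3: the dichotomy and the escape lemma.} Let $m=m(\sG)$ be the constant furnished by the \emph{uniform} escape from subvarieties lemma (Proposition 3.2 of \cite{EMO}, Lemma 4.2 of \cite{Br}) for subvarieties of $\sGL_n$ of degree $\le D(\sG)$, and take $\rho_1,\rho_2$ satisfying (1)--(3) with this $m$. If $\cM_\sH=\sH$, then $\rho(\gamma)\in\cM$ for every $\gamma\in\Gamma$, hence $\lambda(\rho_1(\gamma))=\lambda(\rho_2(\gamma))$ for every $\gamma$ of infinite order by Step 2, and also for torsion $\gamma$ (both sides vanish); so $\rho_1$ and $\rho_2$ have equal Jordan--Lyapunov projection spectra and, being Zariski dense in the semisimple group $\sG$ without compact factors, are equivalent by Dal'bo--Kim \cite{DK}. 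If instead $\cM_\sH\subsetneq\sH$, then $\cM_\sH$ is a proper subvariety of $\sH=\overline{\rho(\Gamma)}$ of degree $\le D(\sG)$, so the escape lemma produces an infinite-order $\gamma\in\sB_\sS(m)$ with $\rho(\gamma)\notin\cM_\sH$, hence $\rho(\gamma)\notin\cM$, whence $\lambda(\rho_1(\gamma))\ne\lambda(\rho_2(\gamma))$ by Step 2 -- contradicting (3). Thus the first alternative always holds, and $\rho_1\sim\rho_2$.

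\textbf{The main obstacle.} The crux is that $m$ must depend on $\sG$ \emph{alone}: this relies on the \emph{uniform} form of the escape lemma, whose constant does not depend on the finite symmetric generating set of the Zariski-dense subgroup (nor on which algebraic subgroup of $\sGL_n$ one works in), as opposed to the cruder form whose constant degenerates as the generators approach a non-generic configuration. It also relies on the degrees of $\sH$ and $\cM_\sH$ being bounded in terms of $\sG$, and on being able to take the escaping element loxodromic; for the latter I would invoke the version of the escape lemma producing loxodromic elements together with hypothesis (2), which equates infinite-order elements of $\Gamma$ with those $\gamma$ making $\rho_i(\gamma)$ loxodromic. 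A secondary issue, when $\sG$ is not simple, is the Goursat-type classification of subdirect products of semisimple groups, needed to see that ``$\cM_\sH=\sH$'' is exactly ``$\sH$ is the graph of an automorphism''; the trivial-centre hypothesis is what renders this automorphism, and the notion of equivalence, unambiguous.
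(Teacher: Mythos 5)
Your skeleton matches the paper's: pass to $\rho=(\rho_1,\rho_2)\colon\Gamma\to\sG\times\sG$, exploit that $\sM$ is $2$-torsion for split $\sG$ so that $g^2$ is conjugate to $\exp(2\lambda(g))$ on the loxodromic locus, turn agreement of Jordan--Lyapunov projections into a polynomial condition, and apply the escape lemma. The algebraization and the closing step differ. The paper uses the single function $\cP(\aleph(g,h))=\ttr(g^2)-\ttr(h^2)$, needs only the forward implication $\lambda(g)=\lambda(h)\Rightarrow\aleph(g,h)\in\sZ_\cP$ (Lemma \ref{lem.eqjor}), forces $\aleph(\sX_\C)\subset\sZ_\cP$ via the escape lemma, and then proves directly (Lemma \ref{lem.nonempjor}) that a Goursat graph contained in $\sZ_\cP$ must have trivial kernels, after which Goursat's lemma gives the automorphism. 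You instead build a variety $\cM$ whose intersection with the loxodromic locus is the \emph{exact} locus of agreement (characteristic polynomials of $\pi_j(g^2)$ for representations with highest weights spanning $\fa^*$), and in the case $\cM\cap\sH=\sH$ you hand off to Dal'bo--Kim. Both routes are sound; yours outsources the final step, while the paper in effect reproves the needed instance of Dal'bo--Kim by a trace computation, which is what produces a self-contained and explicit constant $m$.

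Two points would need tightening. First, the claim ``algebraic subgroups of $\sGL_n$ have degree bounded by a constant depending only on $n$'' is false in general (the degree-$k$ curves $\{\operatorname{diag}(t,t^k):t\neq 0\}$ already sit inside a fixed torus in $\sGL_2$). What actually bounds the complexity of $\sH$, hence of $\cM\cap\sH$, is the Goursat classification of subdirect products of $\sG\times\sG$: this is exactly Proposition \ref{prop.linfinite}, which shows $\sH$ is one of finitely many Goursat graphs up to a linear change of coordinates (which preserves degree, number of irreducible components, and dimension). You relegate Goursat to ``interpreting the conclusion,'' but it is in fact the mechanism that makes $m$ depend on $\sG$ alone; the paper's $m=\max\{M_{\aleph(\sY_\C)\cap\sZ_\cP}\mid\sY\in\cS\}$ is the maximum of the escape constant over precisely this finite family. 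Second, the escape lemma only produces some $\gamma\in\sB_\sS(m)$ with $\rho(\gamma)\notin\cM\cap\sH$; to contradict hypothesis (3) you need $\gamma$ of infinite order. The ``version of the escape lemma producing loxodromic elements'' you invoke is not in \cite{EMO} or \cite{Br}. The repair, which matches what the paper tacitly does via the hypothesis in Theorem \ref{thm2} that all $\rho_i(\gamma)$ are loxodromic, is to assume $\Gamma$ torsion-free: since $(e,e)\in\cM$ automatically, the escaping $\gamma$ is nontrivial, hence of infinite order, hence mapped to loxodromic elements by \cite{KaPo}.
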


Moreover, using the above result and the fact that Hitchin representations are special cases of Anosov representations with respect to minimal parabolic subgroups, we prove the following:

\begin{theorem}
	Let $\Gamma$ be the fundamental group of a compact surface without boundary, $\sS\subset\Gamma$ be a finite symmetric generating set, $\sB_\sS(n)$ be the set of all elements of $\Gamma$ whose word length with respect to $\sS$ is less than some integer $n$, let $\lambda$ be the Jordan-Lyapunov projection and let $\cH_n(\Gamma)$ be the space of Hitchin representations of $\Gamma$ into $\sPSL(n,\R)$. Then, there exists $m_n\in\N$ such that any two representations $\rho_1,\rho_2\in\cH_n(\Gamma)$ which satisfy the following: $\lambda(\rho_1(\gamma))=\lambda(\rho_2(\gamma))$ for all $\gamma \in \sB_\sS(m_n)$, are equivalent; in other words, there exists an automorphism $\sigma:\sG \to \sG$ such that we have $\sigma \circ \rho_1=\rho_2$.
\end{theorem}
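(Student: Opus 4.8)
The plan is to derive this from the first theorem above---the one valid for every connected real split semisimple algebraic Lie group with trivial centre and without compact factors---applied both to $\sG=\sPSL(n,\R)$ itself and to the Zariski closures of Hitchin representations. Observe first that $\sPSL(n,\R)$ is connected, real split, semisimple, has trivial centre and no compact factors, that $\Gamma$ (a higher-genus surface group) is word hyperbolic, and that by Labourie's characterisation \cite{Labourie} every $\rho\in\cH_n(\Gamma)$ is Anosov with respect to the minimal parabolic subgroups of $\sG$. If every Hitchin representation were Zariski dense in $\sG$, the statement would be immediate: the general theorem would directly supply a constant and the required automorphism, so $m_n$ could be taken to be the constant it assigns to $\sPSL(n,\R)$. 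The real difficulty is that the Fuchsian locus---and, depending on $n$, the symplectic, the split orthogonal and (for $n=7$) the split $\mathsf{G}_2$ loci---consists of Hitchin representations that are \emph{not} Zariski dense in $\sG$, and these must be treated by descending to the Zariski closure.

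For this I would invoke the structure theory of Hitchin representations (cf.\ \cite{Labourie,Gui1}): for $\rho\in\cH_n(\Gamma)$ the Zariski closure $\sH_\rho$ of $\rho(\Gamma)$ is, up to conjugacy in $\sG$, one of the finitely many ``Hitchin-type'' subgroups---those containing a principal $\sSL_2$: namely $\sG$ itself, the principal $\sPSL_2(\R)$, and the symplectic, orthogonal or $\mathsf{G}_2$ subgroup admitted by $n$---forming a list $\cL_n$; each member of $\cL_n$ is again connected real split semisimple with trivial centre and without compact factors, the corestriction $\bar\rho\colon\Gamma\to\sH_\rho$ is (tautologically) Zariski dense and (by the structure theory) Anosov with respect to the minimal parabolic subgroups of $\sH_\rho$, and the inclusion $\kappa_\rho\colon\sH_\rho\hookrightarrow\sG$ is a principal embedding, so it carries minimal parabolics to minimal parabolics. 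A principal embedding induces an injective linear map $(\kappa_\rho)_\ast\colon\fa_{\sH_\rho}\to\fa$ of Cartan subspaces which sends the positive Weyl chamber into the positive Weyl chamber and satisfies $\lambda_{\sG}\circ\kappa_\rho=(\kappa_\rho)_\ast\circ\lambda_{\sH_\rho}$; hence the $\sG$-valued Jordan--Lyapunov data of $\rho$ on any subset of $\Gamma$ determines its $\sH_\rho$-valued data there, and when $\sH_\rho\neq\sG$ all values $\lambda(\rho(\gamma))$ lie in the proper subspace $(\kappa_\rho)_\ast(\fa_{\sH_\rho})\subsetneq\fa$ of dimension $\operatorname{rk}_{\R}\sH_\rho$. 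By Benoist's limit cone theorem the Jordan--Lyapunov values of the Zariski-dense subgroup $\bar\rho(\Gamma)$ span $\fa_{\sH_\rho}$, so over all of $\Gamma$ the set $\{\lambda(\rho(\gamma))\}$ spans exactly $(\kappa_\rho)_\ast(\fa_{\sH_\rho})$; and since the real ranks of the members of $\cL_n$ are pairwise distinct, the dimension of this span pins down the $\sG$-conjugacy class of $\sH_\rho$. The escape-from-subvarieties lemma of Eskin--Mozes--Oh (Proposition~3.2 of \cite{EMO}, Lemma~4.2 of \cite{Br}) then enters to make this effective and uniform: it should yield a constant $m^{\sharp}_n$ such that for every $\rho\in\cH_n(\Gamma)$ already the finite family $\{\lambda(\rho(\gamma)):\gamma\in\sB_\sS(m^{\sharp}_n)\}$ spans a subspace of the full dimension $\operatorname{rk}_{\R}\sH_\rho$, so that the conjugacy class of $\sH_\rho$ is visible on a uniformly bounded ball.

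Granting this, put $m_n:=\max\bigl(m^{\sharp}_n,\ \max_{\sH\in\cL_n}m_{\sH}\bigr)$, with $m_{\sH}$ the constant the general theorem produces for $\sH$, and suppose $\rho_1,\rho_2\in\cH_n(\Gamma)$ agree in $\lambda$ on $\sB_\sS(m_n)$. Because $m_n\geq m^{\sharp}_n$, the closures $\sH_{\rho_1}$ and $\sH_{\rho_2}$ are $\sG$-conjugate, so after replacing $\rho_2$ by a $\sG$-conjugate---which alters neither its membership in $\cH_n(\Gamma)$ nor any of its Jordan--Lyapunov values---I may assume $\sH_{\rho_1}=\sH_{\rho_2}=:\sH\in\cL_n$ with common inclusion $\kappa\colon\sH\hookrightarrow\sG$. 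The corestrictions $\bar\rho_i\colon\Gamma\to\sH$ are Zariski dense in $\sH$ and Anosov with respect to the minimal parabolics of $\sH$, and injectivity of $\kappa_\ast$ in $\lambda_{\sG}\circ\kappa=\kappa_\ast\circ\lambda_{\sH}$ forces $\lambda_{\sH}(\bar\rho_1(\gamma))=\lambda_{\sH}(\bar\rho_2(\gamma))$ for all $\gamma\in\sB_\sS(m_{\sH})$; the general theorem applied to $\sH$ then produces an automorphism $\sigma_{\sH}$ of $\sH$ with $\sigma_{\sH}\circ\bar\rho_1=\bar\rho_2$. It remains to extend $\sigma_{\sH}$ to an automorphism $\sigma$ of $\sG$ with $\sigma\circ\kappa=\kappa\circ\sigma_{\sH}$, which follows by a direct inspection of the short list $\cL_n$: inner automorphisms of $\sH$ extend to inner automorphisms of $\sG$, the similitude/real-form automorphisms occurring for members of $\cL_n$ are restrictions of conjugations by $\mathsf{PGL}(n,\R)\subseteq\operatorname{Aut}(\sPSL(n,\R))$, and when $\sH=\sG$ the transpose-inverse automorphism already lies in $\operatorname{Aut}(\sG)$. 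Then $\sigma\circ\rho_1=\rho_2$, and undoing the auxiliary conjugation yields the asserted equivalence.

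The step I expect to be the genuine obstacle is the construction of the uniform constant $m^{\sharp}_n$, i.e.\ making the stabilisation of the span of the limit cone of a Hitchin representation effective and uniform via the escape-from-subvarieties lemma; this is exactly where that lemma, in combination with the special nature of Hitchin representations---which confines the Zariski closure to the chain $\cL_n$ of pairwise distinct real ranks---carries the argument, and it is what promotes the known ``all-elements'' rigidity to a finite-step statement. By contrast, the reduction to the general theorem, the intertwining of Jordan--Lyapunov projections through a principal embedding, and the extension of an automorphism of a member of $\cL_n$ to $\sG$ should all be routine once the classification of Zariski closures is granted.
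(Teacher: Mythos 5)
Your plan is genuinely different from the paper's, and the step you yourself flag as the ``genuine obstacle''---the construction of a uniform radius $m^{\sharp}_n$ on which the finite set of Jordan projections already pins down the conjugacy class of the Zariski closure $\sH_\rho$---is a real gap, not a routine omission. The escape-from-subvarieties lemma concerns proper \emph{algebraic} subvarieties of the Zariski closure, while the locus $\{g\in\sG : \lambda(g)\in W\}$ for a proper subspace $W\subsetneq\fa$ is not cut out by regular functions (the Jordan projection is not a morphism of varieties), so the lemma does not directly furnish the stabilisation bound you want. Making Benoist's limit-cone span effective is a different problem from escaping a subvariety, and the paper never solves it because it takes a route that never needs it.

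The paper avoids identifying each $\sH_{\rho_i}$ from its own Jordan data. It forms the \emph{joint} Zariski closure $\sG(\rho_1,\rho_2)$ of $(\rho_1,\rho_2)(\Gamma)$ inside $\sPSL(n,\R)\times\sPSL(n,\R)$, and the object to which the escape lemma is applied is the honestly algebraic subvariety $\sZ_\cP\cap\aleph(\sG(\rho_1,\rho_2)_\C)$, where $\sZ_\cP$ is the zero set of $\ttr(g_{11}^2)-\ttr(g_{22}^2)$: this polynomial is designed so that (for loxodromic elements in a split group, using that $\sM$ is generated by involutions) equality of Jordan projections implies membership, even though the full condition $\lambda(g)=\lambda(h)$ is not algebraic. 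The escape lemma then yields $\aleph(\sG(\rho_1,\rho_2)_\C)\subset\sZ_\cP$, and a Goursat-plus-normality argument (any nontrivial normal factor in one coordinate would contain a semisimple $g$ with $\ttr(g^2)\neq n^2-1$, contradicting $\sZ_\cP$) shows $\sG(\rho_1,\rho_2)$ is the graph of an isomorphism $\sigma$ between the two individual closures, i.e.\ $\rho_2=\sigma\circ\rho_1$. The uniform constant $m$ is obtained by classifying the finitely many possible joint closures (via Guichard's classification of Zariski closures of Hitchin representations together with Goursat) rather than by reading the closure off of finite Jordan data. A secondary soft spot in your outline is the extension of $\sigma_\sH\in\operatorname{Aut}(\sH)$ to $\sigma\in\operatorname{Aut}(\sG)$ with $\sigma\circ\kappa=\kappa\circ\sigma_\sH$, which you handle by ``direct inspection''; the paper instead passes from the graph isomorphism to an automorphism of $\sPSL(n,\R)$ using the irreducibility of Hitchin representations (Lemma 2.19 of \cite{BCLS}), sidestepping the case analysis. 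Both the $m^{\sharp}_n$ gap and this extension step would need genuinely new arguments before your outline becomes a proof.
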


%%%%%%%%%%%%%%%%%%%%%%%%%%%%%%%%%%%%%%%%%%%%%%%%%%%%%%%%%%%%%%%%%%%%%%%%%%%%%%%%%%%%%%%%%
\subsection{Margulis-Smilga spacetimes}
%%%%%%%%%%%%%%%%%%%%%%%%%%%%%%%%%%%%%%%%%%%%%%%%%%%%%%%%%%%%%%%%%%%%%%%%%%%%%%%%%%%%%%%%%

While trying to answer a question asked by Milnor \cite{Mil}, in \cite{Margulis1,Margulis2} Margulis showed that non-abelian free groups admit proper discontinuous actions on $\R^3$ via affine transformations. The existence of such actions of non-abelian free groups came as a surprise to the community due to the Auslander's conjecture \cite{Aus2}. Auslander's conjecture states that the only subgroups of $\sGL_n(\R)\ltimes\R^n$ which act properly discontinuously, freely and cocompactly on $\R^n$ are virtually polycyclic. Auslander came up with this conjecture, based on his work with Markus \cite{AM}, while trying to generalize Bieberbach's results \cite{B1,B2} on classification of subgroups of $\mathsf{O}_n(\R)\ltimes\R^n$ which act properly discontinuously, freely and cocompactly on $\R^n$. Auslander's conjecture has been shown to hold true for $n\leq6$ by the work of Fried--Goldman \cite{FG} and Abels--Margulis--Soifer \cite{AMS2}.

Margulis' results were further extended by Abels--Margulis--Soifer \cite{AMS}. They showed that there exist non-abelian free subgroups of $\mathsf{SO}(n,n+1)\ltimes\R^{2n+1}$ which act properly dicontinuously and freely on $\R^{2n+1}$ if and only if $n$ is odd. In all these results the notion of a real valued invariant, subsequently known as Margulis invariant, played a central role in determining properness. In \cite{AMS3} Abels--Margulis--Soifer introduced a new vector valued invariant to replace the Margulis invariant. A similar invariant was later extensively used by Smilga \cite{Smilga,Smilga3,Smilga4} to prove existence of proper actions of non-abelian free subgroups of more general affine groups. In this article, the examples of proper affine actions of non-abelian free groups coming from the work of Smilga, generalizing previous work by Margulis and Abels--Margulis--Soifer, has been called Margulis-Smilga spacetimes and the vector valued versions of the Margulis invariants needed to gauge properness has been called Margulis-Smilga invariants. 

In this article we prove the following:

\begin{theorem}\label{thm.msinv}
	Let $\sG$ be a connected real split semisimple algebraic Lie group with trivial center and without compact factors. Let $\Gamma$ be a finitely generated group with a finite symmetric generating set $\sS \subset \Gamma$, let $\sB_\sS(n)$ be the set of all elements of $\Gamma$ whose word length with respect to $\sS$ is less than some integer $n$ and let $\tM$ be the Margulis-Smilga invariant (for more dertails please see Definition \ref{def.margi}). Then, there exists $m\in\N$ depending only on $\sG\ltimes_\tR\sV$ such that any two Margulis-Smilga spacetimes $\rho_1,\rho_2:\Gamma\to\sG\ltimes_\tR\sV$ whose linear parts are conjugate to $\varrho$ and which satisfy the following:
	\begin{enumerate}
		\item $\varrho(\Gamma)$ is Zariski dense in $\sG$,
		\item $\tM(\rho_1(\gamma))=\tM(\rho_2(\gamma))$ for all $\gamma \in \sB_\sS(m)$,
	\end{enumerate}
	are equivalent; in other words, there exists $X\in\sV$ such that we have $(g,X)^{-1}\rho_1(g,X)=\rho_2$.
\end{theorem}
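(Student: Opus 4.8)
The plan is to reduce the statement to a single application of the escape from subvarieties lemma inside the affine group $\sG\ltimes_\tR\sV$, much as one does for the Cartan and Jordan--Lyapunov spectra. First I would normalise the linear parts: since those of $\rho_1$ and $\rho_2$ are both conjugate in $\sG$ to $\varrho$, and since $\tM$ is invariant under conjugation by $\sG\ltimes_\tR\sV$ (Definition \ref{def.margi}), conjugating each $\rho_i$ by a suitable $(c_i,0)\in\sG\ltimes_\tR\sV$ makes both have linear part exactly $\varrho$ without affecting hypothesis $(2)$. Writing $\rho_i(\gamma)=(\varrho(\gamma),u_i(\gamma))$ with $u_i\colon\Gamma\to\sV$ a cocycle for $\tR\circ\varrho$, the goal becomes to show that the difference cocycle $w\defeq u_1-u_2$ is a coboundary, i.e.\ $w(\gamma)=\tR(\varrho(\gamma))X-X$ for some $X\in\sV$; the conjugating element in the conclusion is then assembled from the $c_i$ and $X$.

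Next I would use that for a loxodromic $g\in\sG$ the map $v\mapsto\tM((g,v))$ is, by construction, a linear projection of $v$ onto the neutral part of $\tR(g)$ carried into $\fa$; in particular it is linear in $v$, with kernel the $\tR(g)$-invariant complement $\mathrm{Im}(\tR(g)-\tI)$ of that neutral part. Hence for every $\gamma$ with $\varrho(\gamma)$ loxodromic one has $\tM(\rho_1(\gamma))-\tM(\rho_2(\gamma))=\tM((\varrho(\gamma),w(\gamma)))$, so hypothesis $(2)$ says precisely that $(\varrho,w)(\sB_\sS(m))\subseteq Y$, where $Y$ is the union of $\{g\in\sG:g\ \text{not loxodromic}\}\times\sV$ with the Zariski closure of $\{(g,v):g\ \text{loxodromic},\ v\in\mathrm{Im}(\tR(g)-\tI)\}$. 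This $Y$ is a subvariety whose degree $D$ depends only on $\sG\ltimes_\tR\sV$, and it is a \emph{proper} subvariety because the neutral space of a generic loxodromic element is nonzero --- exactly the feature that makes $\tM$ meaningful --- so there exist $(g,v)$ with $g$ loxodromic and $\tM((g,v))\neq0$.

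I would then form the homomorphism $(\varrho,w)\colon\Gamma\to\sG\ltimes_\tR\sV$ and let $H$ be the Zariski closure of its image. Since $\varrho(\Gamma)$ is Zariski dense, $H$ surjects onto $\sG$, so $H\cap\sV$ is a $\sG$-submodule of $\sV$; because the rational $H^1$ of a semisimple group with values in a rational module vanishes, $H\cap\sV=0$ already forces $w$ to be a coboundary, while if $w$ is not a coboundary then $H\cap\sV\neq0$, hence $H\cap\sV=\sV$ by irreducibility of $\tR$ (as for the representations in Smilga's construction), and therefore $H=\sG\ltimes_\tR\sV$, i.e.\ $(\varrho,w)(\Gamma)$ is Zariski dense. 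Now the escape from subvarieties lemma (Proposition 3.2 of \cite{EMO}, Lemma 4.2 of \cite{Br}), applied to the proper subvariety $Y$ of degree $D$, supplies $m\in\N$ depending only on $\sG\ltimes_\tR\sV$ such that no Zariski-dense subgroup of $\sG\ltimes_\tR\sV$ generated by a finite symmetric set has its radius-$m$ ball inside $Y$. If $w$ were not a coboundary, $(\varrho,w)(\Gamma)$ would be Zariski dense, so some $\gamma\in\sB_\sS(m)$ would give $(\varrho,w)(\gamma)\notin Y$, i.e.\ $\varrho(\gamma)$ loxodromic with $\tM(\rho_1(\gamma))\neq\tM(\rho_2(\gamma))$, contradicting $(2)$. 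Hence $w$ is a coboundary and, unwinding the first step, $\rho_1$ and $\rho_2$ are conjugate in $\sG\ltimes_\tR\sV$.

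The hard part will be the dichotomy in the last paragraph --- showing that a non-trivial difference cocycle forces $(\varrho,w)(\Gamma)$ to be Zariski dense in the whole affine group --- together with the bookkeeping needed to exhibit $Y$ as a proper subvariety of degree bounded purely in terms of $\sG\ltimes_\tR\sV$; this uses the structure of loxodromic elements in a split group and the non-vanishing of the weight-zero space, i.e.\ precisely the hypotheses under which Margulis--Smilga invariants are defined. The normalisation of linear parts and the linearity of $v\mapsto\tM((g,v))$ should both be routine given Definition \ref{def.margi}.
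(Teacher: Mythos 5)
Your proposal is correct and is essentially the route the paper takes: after normalising the linear part, the paper forms the difference cocycle $\rho := (\tL_{\rho_1}, \tT_{\rho_1} - \tR_g\tT_{\rho_2})$, applies the escape lemma to the subvariety $\sH_\C\cap\sZ_\cR$ (your $Y$, realised as an explicit algebraic zero locus via the polynomial $\cR(\tA,X)=\tP_\tA(\tI-\tA)X$ and Remark \ref{rem.poly}) to show the Zariski closure $\sX$ of $\rho(\Gamma)$ is proper, and then invokes the classification of normal subgroups of $\sG\ltimes_\tR\sV$ (Proposition A.2 of \cite{Ghosh5}) together with Whitehead's lemma to force $\ker(\tL|_\sX)$ trivial and obtain the coboundary. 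The two pieces you flag as ``the hard part'' --- the Zariski-density dichotomy and the bounded-degree realisation of $Y$ --- are exactly what that Proposition A.2 and the $\tP_\tA$ construction of Remark \ref{rem.poly} supply.
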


Moreover, using similar methods we also prove the following:

\begin{theorem}\label{thm.normmsinv}
	Let $\sG$ be a connected real split semisimple algebraic Lie group with trivial center and without compact factors, let $\tR$ preserves a norm $\|\cdot\|_\tR$ on $\sV$. Let $\Gamma$ be a finitely generated group with a finite symmetric generating set $\sS \subset \Gamma$, , let $\sB_\sS(n)$ be the set of all elements of $\Gamma$ whose word length with respect to $\sS$ is less than some integer $n$ and let $\tM$ be the Margulis-Smilga invariant. Then, there exists $m\in\N$ depending only on $\sG\ltimes_\tR\sV$ such that any two Margulis-Smilga spacetimes $\rho_1,\rho_2:\Gamma\to\sG\ltimes_\tR\sV$ whose linear parts are Zariski dense in $\sG$ and which satisfy the following: $\|\tM(\rho_1(\gamma))\|_\tR=\|\tM(\rho_2(\gamma))\|_\tR$ for all $\gamma \in \sB_\sS(m)$, are equivalent, i.e. there exists $(\tA,X)\in\sGL(\sV)\ltimes\sV$ such that we have $(\tA,X)^{-1}\rho_1(\tA,X)=\rho_2$.
\end{theorem}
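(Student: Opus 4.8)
The plan is to reduce Theorem~\ref{thm.normmsinv} to the corresponding infinite-data statement — that two Margulis--Smilga spacetimes with Zariski dense linear parts whose normed Margulis--Smilga invariants agree over \emph{all} of $\Gamma$ are conjugate in $\sGL(\sV)\ltimes\sV$ (which one establishes from Smilga's properness criterion, along the lines of \cite{Ghosh5}) — the role of the ``escape from subvarieties lemma'' being to promote agreement of the normed invariants on a single ball $\sB_\sS(m)$ to agreement over the whole group, with $m$ uniform.

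First I would set up the algebraic picture. Since $\Gamma$ is generated by $\sS$, a pair $(\rho_1,\rho_2)$ of Margulis--Smilga spacetimes is a point of $(\sG\ltimes_\tR\sV)^{|\sS|}\times(\sG\ltimes_\tR\sV)^{|\sS|}$, and the Zariski closure of the image of the map $\Phi\colon\Gamma\to(\sG\ltimes_\tR\sV)\times(\sG\ltimes_\tR\sV)$, $\gamma\mapsto(\rho_1(\gamma),\rho_2(\gamma))$, is a real algebraic subgroup $\sH\subseteq(\sG\ltimes_\tR\sV)^2$ whose two projections onto the $\sG$-factors are surjective (using Zariski density of the linear parts and that $\sG$ is connected algebraic); in particular $\deg\sH$ is bounded in terms of $\sG\ltimes_\tR\sV$ alone, since algebraic subgroups of a fixed algebraic group have bounded degree. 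Because in a Margulis--Smilga spacetime every non-trivial element is loxodromic, for $\gamma\neq e$ the point $\rho_i(\gamma)$ lies in the Zariski-open locus $\Omega\subseteq\sG\ltimes_\tR\sV$ on which $\tM$ is a regular map into the neutral subspace; since $\|\cdot\|_\tR$ is $\tR$-invariant, its square restricts to a polynomial there. Clearing denominators, one obtains a polynomial $P$ on $(\sG\ltimes_\tR\sV)^2$, of degree bounded in terms of $\sG\ltimes_\tR\sV$, such that for $\gamma\neq e$ one has $P(\Phi(\gamma))=0$ exactly when $\|\tM(\rho_1(\gamma))\|_\tR=\|\tM(\rho_2(\gamma))\|_\tR$.

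Next I would run the escape argument. Suppose the normed invariants agree on $\sB_\sS(m)$ but disagree at some $\gamma_0\in\Gamma$. Then $W\defeq\{h\in\sH:P(h)=0\}$ is a proper subvariety of $\sH$ — it omits $\Phi(\gamma_0)$, which lies in $\sH$ because $\Phi(\Gamma)$ is Zariski dense there — of degree bounded in terms of $\sG\ltimes_\tR\sV$, and it contains the Zariski-dense generating subset $\{\Phi(\gamma):\gamma\in\sB_\sS(m),\ \gamma\neq e\}$ of $\sH$. By Proposition~3.2 of \cite{EMO} and Lemma~4.2 of \cite{Br}, applied inside the fixed ambient group $(\sG\ltimes_\tR\sV)^2$, there is an $m$ — depending only on the dimension and degree data of $(\sG\ltimes_\tR\sV)^2$ and on the uniform degree bounds for $\sH$ and $W$, hence only on $\sG\ltimes_\tR\sV$ — for which no such configuration exists. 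Therefore $\|\tM(\rho_1(\gamma))\|_\tR=\|\tM(\rho_2(\gamma))\|_\tR$ for every $\gamma\in\Gamma$, and the infinite-data rigidity statement then produces $(\tA,X)\in\sGL(\sV)\ltimes\sV$ with $(\tA,X)^{-1}\rho_1(\tA,X)=\rho_2$: the properness constraint pins down the orientation data lost on taking norms and yields a linear isometry of the neutral subspaces intertwining the two invariant maps, which upgrades to the conjugating $\tA\in\sGL(\sV)$ matching $\tR\circ\varrho_1$ with $\tR\circ\varrho_2$, and once the linear parts coincide the two translational cocycles differ by a coboundary, realised by translation by a suitable $X\in\sV$.

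The main obstacle I anticipate lies in the second step: one must verify that the locus where the normed invariants disagree is genuinely cut out inside $\sH$ by equations of degree bounded only in terms of $\sG\ltimes_\tR\sV$, which requires both that $\tM$ be a bounded-degree rational map on $\sG\ltimes_\tR\sV$ — so that the generalized eigenspace decomposition of the linear part and the linear system defining the translation along the invariant affine subspace are algebraic of controlled complexity — and that $\|\cdot\|_\tR^2$ be polynomial on the neutral subspace; and one must also confirm that the escape-from-subvarieties threshold can be taken uniform over the varying Zariski closures $\sH$, which again rests on the boundedness of degrees of algebraic subgroups of the fixed group $(\sG\ltimes_\tR\sV)^2$.
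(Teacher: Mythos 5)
Your overall strategy is essentially the same as the paper's: you apply the escape-from-subvarieties lemma to the zero locus of the disagreement polynomial inside the Zariski closure $\sX$ of $(\rho_1,\rho_2)(\Gamma)$ to force that Zariski closure to lie inside the agreement variety $\sZ_\cE$, and you then extract the conjugating element in $\sGL(\sV)\ltimes\sV$. The paper's proof of Theorem~\ref{thm.normmsinv} goes through Theorem~\ref{thm4}: after showing $\aleph(\sX_\C)\subset\sZ_\cE$ by the same escape argument, it invokes Goursat's lemma, Lemma~9.2 of \cite{Ghosh5}, and a dimension count to force the two Goursat kernels $\sN_1,\sN_2$ to be trivial, and then applies Proposition~8.5 of \cite{Ghosh5}; you instead re-package this last stretch as ``the invariants now agree on all of $\Gamma$, apply the infinite-data rigidity.'' That repackaging is harmless.

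The genuine gap is in your justification of the \emph{uniformity} of $m$. You write that ``$\deg\sH$ is bounded in terms of $\sG\ltimes_\tR\sV$ alone, since algebraic subgroups of a fixed algebraic group have bounded degree.'' That general principle is false: already in $(\C^*)^2$ the algebraic subgroups $\{x^a y^b=1\}$ have unbounded degree as $(a,b)$ varies. The boundedness that you need is special to this situation, and the route to it is the content of Proposition~\ref{prop.affinite}: one first upgrades ``linear parts Zariski dense in $\sG$'' to ``$\rho_i(\Gamma)$ Zariski dense in $\sG\ltimes_\tR\sV$'' (this is where you need the non-vanishing of the Margulis--Smilga invariants from Remark~\ref{rem.nonzero}, applied together with Corollary~7.3 of \cite{Ghosh5} — note that your $\sH$ only surjecting onto the two $\sG$-factors is too weak), then applies Goursat's lemma to see $\sX$ is a graph over quotients by normal subgroups, then uses that $\sG\ltimes_\tR\sV$ has only finitely many normal subgroups relevant here (Proposition~A.2 of \cite{Ghosh5} plus Theorem~4.3 of \cite{Milne}), and finally uses that Lie-group isomorphisms are linear, hence do not change degrees, dimensions, or number of irreducible components. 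You flag this uniformity issue at the end of your proposal, so you anticipated the obstacle, but as written the step is not justified; without Proposition~\ref{prop.affinite} or its equivalent you do not get an $m$ depending only on $\sG\ltimes_\tR\sV$. A smaller point: you should also ensure that the disagreement polynomial vanishes at the identity $(e,0,e,0)$ (as the paper's $\cE$ does by construction), so that the escape lemma's conclusion ``$\sB_\sS(M)\not\subset W$'' is genuinely contradicted by inclusion of the full ball.
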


We observe that for the special case of self-contragradient representations Theorem \ref{thm.msinv} follows directly from Theorem \ref{thm.normmsinv}.

%%%%%%%%%%%%%%%%%%%%%%%%%%%%%%%%%%%%%%%%%%%%%%%%%%%%%%%%%%%%%%%%%%%%%%%%%%%%%%%%%%%%%%%%%
\subsection{Key Ingredient}
%%%%%%%%%%%%%%%%%%%%%%%%%%%%%%%%%%%%%%%%%%%%%%%%%%%%%%%%%%%%%%%%%%%%%%%%%%%%%%%%%%%%%%%%%

The main ingredient that is repeatedly used in this article to obtain the various finite step rigidity results mentioned above, is a variation of the following escape from subvarieties lemma.

\begin{lemma}[Eskin--Mozes--Oh, see Proposition 3.2 \cite{EMO} and Lemma 4.2 \cite{Br}]\label{lemma.escape}
Let $\Gamma\subset \sGL_n(\C)$ be any finitely generated subgroup, let $\sS$ be a finite set of generators of $\Gamma$, let $\sB_\sS(n)$ be the set of all elements of $\Gamma$ whose word length with respect to $\sS$ is less than some integer $n$  and let $\sH$ be the Zariski closure of $\Gamma$, which is also assumed to be Zariski-connected. Then for any proper subvariety $\sX$ of $\sH$, there exists a positive integer $M$ depending only on $\sX$ such that for any finite generating set $\sS$ of $\Gamma$, we have $\sB_\sS(M)\not\subset \sX$.
\end{lemma}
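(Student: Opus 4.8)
The plan is to turn the escape statement into a problem of linear algebra inside a finite-dimensional $\sH$-module of regular functions of bounded degree, where it becomes a cheap dimension count.

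First I would reduce to a single equation: since $\sX$ is a proper closed subvariety of the irreducible affine variety $\sH$, its ideal in $\C[\sH]$ is nonzero, so I may fix $0\ne f$ in that ideal and let $D$ be its degree with respect to a fixed embedding $\sH\subset\sGL_n(\C)\subset\C^{n^2+1}$ whose last coordinate records $(\det)^{-1}$; it then suffices to escape the hypersurface $\{f=0\}\supset\sX$. Next I would introduce the finite-dimensional space $W$ of restrictions to $\sH$ of polynomials of degree at most $D$ — whose dimension depends only on $D$ and $n$ — together with the representation $\rho\colon\sH\to\sGL(W)$ given by $(\rho(h)g)(x)=g(xh)$. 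This is well defined and algebraic because right translation $x\mapsto xh$ is linear in the matrix entries and rescales the $(\det)^{-1}$-coordinate, hence preserves the degree filtration, and because $\sH$ is a subgroup, so right translation maps $\sH$ to itself. Note that $f\in W$, that $f\ne0$ in $\C[\sH]$, and that $f|_\sX=0$.

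The core of the argument takes place in the dual $W^*$. Writing $\mathrm{ev}_\gamma$ for evaluation at $\gamma$, I would first record that $\{\mathrm{ev}_h:h\in\sH\}$ spans $W^*$ (a nonzero regular function on $\sH$ does not vanish identically) and that $\rho(h)^*\mathrm{ev}_e=\mathrm{ev}_h$, so that $W^*$ is generated as an $\sH$-module by $\mathrm{ev}_e$. After reducing to $\sS$ symmetric — which only enlarges each $\sB_\sS(M)$ — so that $\sB_\sS(M+1)=\sB_\sS(M)\cdot(\sS\cup\{e\})$, I would set $L_M:=\mathrm{span}\{\mathrm{ev}_\gamma:\gamma\in\sB_\sS(M)\}\subset W^*$ and use $\mathrm{ev}_{\gamma s}=\rho(s)^*\mathrm{ev}_\gamma$ to obtain $L_{M+1}=\sum_{s\in\sS\cup\{e\}}\rho(s)^*L_M$, an increasing chain $L_1\subseteq L_2\subseteq\cdots\subseteq W^*$. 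The key step is: if $L_M=L_{M+1}$ for some $M$, then $\rho(s)^*L_M\subseteq L_M$ for every $s\in\sS$, so $L_M$ is invariant under $\rho(\gamma)^*$ for all $\gamma\in\Gamma$; since the set of elements of $\sH$ preserving the subspace $L_M$ is Zariski closed and $\Gamma$ is Zariski dense, $L_M$ is $\rho(\sH)^*$-invariant, and as $\mathrm{ev}_e\in L_M$ this forces $L_M=W^*$.

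It follows that $L_1\subseteq L_2\subseteq\cdots$ strictly increases until it equals $W^*$, so $L_M=W^*$ already for $M\ge\dim W$; I would take $M(\sX):=\dim W$, which depends only on $D$ and $n$, hence only on $\sX$. To conclude: if some generating set $\sS$ had $\sB_\sS(M)\subset\sX$, then $f$ would vanish on $\sB_\sS(M)$, i.e. annihilate $L_M=W^*$, forcing $f=0$ — a contradiction; hence $\sB_\sS(M)\not\subset\sX$ for this $M$ and every $\sS$. The one step I expect to require care, and which carries the whole argument, is recognizing that escape from $\sX$ is equivalent to the evaluation functionals along $\sB_\sS(M)$ spanning $W^*$; once that is in place the uniform bound is merely the length bound for a strictly increasing chain of subspaces, the degree filtration is preserved because $\sH$ is a group, and Zariski density upgrades $\Gamma$-invariance of $L_M$ to $\sH$-invariance because the stabilizer of a subspace is an algebraic subgroup.
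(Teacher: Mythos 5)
Your proof is correct, but it takes a genuinely different route from the paper's. The paper (Lemma \ref{lem:ineq} together with Proposition \ref{prop.main}, closely following Eskin--Mozes--Oh) works directly on the variety: it repeatedly intersects $\sX$ with translates $s\sX$ by carefully chosen generators, invokes the generalized Bezout theorem to control the degree and the number of irreducible components of the successive intersections, and shows inductively that the dimension drops to $-1$ after a controlled number of steps; the resulting explicit constant $M_\sX=M(\irr(\sX),\mdeg(\sX),d(\sX))$ grows like a tower of exponentials. You instead reduce to a single hypersurface $\{f=0\}$, pass to the finite-dimensional right regular representation on restrictions to $\sH$ of polynomials of degree at most $D=\deg f$, and run a Burnside-type stabilization argument in $W^*$: the chain $L_1\subseteq L_2\subseteq\cdots$ of spans of evaluation functionals along $\sS$-balls must strictly increase until it equals $W^*$, since a stable $L_M$ is $\sS$-invariant, hence $\Gamma$-invariant, hence (stabilizers of subspaces being Zariski closed, and $\Gamma$ being Zariski dense) $\sH$-invariant, hence all of $W^*$ because it contains the generator $\mathrm{ev}_e$. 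Your argument is shorter, avoids Bezout entirely, and gives a much better constant $M=\dim W\le\binom{n^2+1+D}{\,n^2+1\,}$, polynomial in $D$ for fixed $n$; on the other hand the paper's constant is expressed precisely in terms of $\irr(\sX)$, $\mdeg(\sX)$, $d(\sX)$, which is the form that Propositions \ref{prop.linfinite} and \ref{prop.affinite} and the theorems of Section \ref{sec.fsr} later feed into, so substituting your argument would require bounding the minimal degree $D$ of a nonzero element of $I(\sX)$ uniformly over the relevant finite families of Zariski closures instead.

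One small correction to your write-up: the reduction to $\sS$ symmetric is both unnecessary and, as stated, justified in the wrong direction. The identity $\sB_\sS(M+1)=\sB_\sS(M)\cdot(\sS\cup\{e\})$ holds for any generating set, and once $\rho(s)^*L_M\subseteq L_M$ you automatically get $\rho(s)^*L_M=L_M$ (an injective endomorphism of a finite-dimensional space is surjective), hence $\rho(s^{-1})^*L_M=L_M$, so $L_M$ is $\Gamma$-invariant with no symmetry hypothesis. Moreover the stated reason, that symmetrizing ``only enlarges each $\sB_\sS(M)$'', goes the wrong way: escape of a larger ball from $\sX$ does not imply escape of the smaller one.
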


In this article, we closely follow the original proof of the escape from subvarieties lemma and reprove it to obtain the explicit bounds in the aforementioned finite step rigidity results. In fact, exact formulas for the degrees of various representations of Lie groups appearing in this text can be found in \cite{Kazar} and Proposition 4.7.18 of \cite{DeKe}. Moreover, for the particular case of $\sSO(n,\C)$, an explicit formula can also be found in \cite{BBBKR}.

%%%%%%%%%%%%%%%%%%%%%%%%%%%%%%%%%%%%%%%%%%%%%%%%%%%%%%%%%%%%%%%%%%%%%%%%%%%%%%%%%%%%%%%%%
\section*{Acknowledgements}
%%%%%%%%%%%%%%%%%%%%%%%%%%%%%%%%%%%%%%%%%%%%%%%%%%%%%%%%%%%%%%%%%%%%%%%%%%%%%%%%%%%%%%%%%
I would like to warmly thank \c Ca\u gri Sert for informing me about the escape from subvarieties lemma and explaining it to me. This article grew out of discussions with him. I would also like to thank CIRM, Luminy for hosting us.

%%%%%%%%%%%%%%%%%%%%%%%%%%%%%%%%%%%%%%%%%%%%%%%%%%%%%%%%%%%%%%%%%%%%%%%%%%%%%%%%%%%%%%%%%
\section{Escape from subvarieties}
%%%%%%%%%%%%%%%%%%%%%%%%%%%%%%%%%%%%%%%%%%%%%%%%%%%%%%%%%%%%%%%%%%%%%%%%%%%%%%%%%%%%%%%%%
In this section we will reprove the escape from subvarieties lemma due to Eskin--Mozes--Oh \cite{EMO} in order to obtain explicit bounds. We will closely follow the arguments given in \cite{EMO} but will keep track of the bounds to obtain the needed constants.

%%%%%%%%%%%%%%%%%%%%%%%%%%%%%%%%%%%%%%%%%%%%%%%%%%%%%%%%%%%%%%%%%%%%%%%%%%%%%%%%%%%%%%%%%
\subsection{Inequalities}\label{subsec.ineq}
%%%%%%%%%%%%%%%%%%%%%%%%%%%%%%%%%%%%%%%%%%%%%%%%%%%%%%%%%%%%%%%%%%%%%%%%%%%%%%%%%%%%%%%%%

We now define certain terminologies and use them to provide an explicit description of the constant $M$ appearing in Lemma \ref{lemma.escape}. Let $\sX$ be an algebraic variety. If $\sX=\cup_{i=1}^l \sX_i$ and for $1\leq i \leq l$, $\sX_i$'s are the irreducible components of $\sX$. We define $d(\sX) := \max_i\dim(\sX_i)$, $\irr(\sX)$ to be the number of irreducible components of $\sX$ i.e $\irr(\sX)=l$, $\irr_{md}(\sX)$ to be the number of irreducible components of $\sX$ of the maximal dimension $d(\sX)$ and $\mdeg(\sX)$ to be the maximal degree of an irreducible component of $\sX$. In the following lemma we state and prove a modified version of Lemma 3.5 of \cite{EMO} and use its help to obtain an explicit description of $M$:
\begin{lemma}\label{lem:ineq}
Let $\Gamma\subset \sGL_n(\C)$ be a subgroup with a finite generating set $\sS$. Moreover, let $\sH$ be the Zariski closure of $\Gamma$, which is also assumed to be Zariski-connected and let $\sY$ be a proper subvariety of $\sH$. Then there exists an integer $m\leq\irr_{md}(\sY)$ and a sequence of $m$ elements $s_0, s_1,\dots, s_{m-1}$ of $\sS$ so that if we define the following sequence of varieties: $\sV_0\sY=\sY$, $\sV_{i+1}\sY=\sV_i\sY\cap s_i \sV_i\sY$, for $0\leq i\leq m-1$, then $\sV_m\sY$ satisfies $d(\sV_m\sY)<d(\sY)$. Moreover, the following inequalities hold:
\begin{align*}
    \irr(\sV_m\sY)&\leq\irr(\sY)^{2^{\irr(\sY)}}\mdeg(\sY)^{\irr(\sY)2^{\irr(\sY)}},\\
    \mdeg(\sV_m\sY)&\leq\mdeg(\sY)^{2^{\irr(\sY)}}.
\end{align*}
\end{lemma}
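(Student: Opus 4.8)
The plan is to follow the proof of Lemma~3.5 of \cite{EMO}, carrying the invariants $\irr$ and $\mdeg$ carefully along the construction. Set $d := d(\sY)$ and, for each $i$, let $\cC_i$ be the finite set of $d$-dimensional irreducible components of $\sV_i\sY$, so $|\cC_0| = \irr_{md}(\sY)$. First I would record the elementary fact that, since $\sV_{i+1}\sY = \sV_i\sY \cap s_i\sV_i\sY$ and every component of $s_i\sV_i\sY$ has dimension $\le d$, a $d$-dimensional irreducible variety $\sW$ is a component of $\sV_{i+1}\sY$ precisely when both $\sW$ and $s_i^{-1}\sW$ are components of $\sV_i\sY$; equivalently $\cC_{i+1} = \cC_i \cap s_i\cC_i$ as sets of subvarieties, so in particular $\cC_{i+1} \subseteq \cC_i$. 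I would then show that whenever $\cC_i \neq \emptyset$ some generator $s_i \in \sS$ makes $|\cC_{i+1}| < |\cC_i|$: if not, then $\cC_i = \cC_i \cap s\cC_i$ (hence, by finiteness and injectivity, $\cC_i = s\cC_i$) for every $s \in \sS$, so every $s \in \sS$ permutes $\cC_i$ and therefore fixes the closed set $Z := \bigcup_{\sW \in \cC_i}\sW$, and so does every element of $\Gamma = \langle \sS \rangle$; since $\{g \in \sGL_n(\C) : gZ = Z\}$ is a Zariski-closed subgroup containing $\Gamma$, it contains $\sH = \overline{\Gamma}$, but $\sH$ acts transitively on itself by left translation, so its only nonempty left-invariant closed subset is $\sH$ itself, contradicting $\emptyset \neq Z \subseteq \sY \subsetneq \sH$. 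Choosing $s_0, s_1, \dots$ greedily, $|\cC_i|$ drops strictly until it reaches $0$, which occurs after $m \le |\cC_0| = \irr_{md}(\sY)$ steps; and $\cC_m = \emptyset$ is precisely $d(\sV_m\sY) < d(\sY)$.

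For the quantitative part I would view $\sGL_n(\C)$ as an open subvariety of $\A^{n^2}$ and set $\deg(\sX)$ to be the degree of the Zariski closure of $\sX$ in $\A^{n^2}$ (the sum of the degrees of its irreducible components), so that $\irr(\sX) \le \deg(\sX)$, $\mdeg(\sX) \le \deg(\sX)$ and $\deg(\sX) \le \irr(\sX)\,\mdeg(\sX)$. Left translation by an element of $\sGL_n(\C)$ is the restriction of a linear automorphism of $\A^{n^2}$, hence preserves $d$, $\irr$, $\mdeg$ and $\deg$. Every irreducible component of $\sV_{i+1}\sY = \sV_i\sY \cap s_i\sV_i\sY$ is a component of $A \cap B$ for some component $A$ of $\sV_i\sY$ and some component $B$ of $s_i\sV_i\sY$, and the affine Bezout inequality (valid here since the varieties in question are closed in $\sGL_n(\C)$) gives $\deg(A \cap B) \le \deg(A)\deg(B) \le \mdeg(\sV_i\sY)^2$; taking the maximum over components, and summing over the $\irr(\sV_i\sY)^2$ pairs $(A,B)$, one obtains
\begin{align*}
\mdeg(\sV_{i+1}\sY) &\le \mdeg(\sV_i\sY)^2,\\
\irr(\sV_{i+1}\sY) &\le \irr(\sV_i\sY)^2\,\mdeg(\sV_i\sY)^2.
\end{align*}
The first recursion yields $\mdeg(\sV_m\sY) \le \mdeg(\sY)^{2^m}$; substituting this into the second and iterating (a one-line induction on $i$) yields $\irr(\sV_m\sY) \le \irr(\sY)^{2^m}\mdeg(\sY)^{m 2^m}$. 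As $m \le \irr_{md}(\sY) \le \irr(\sY)$ and $\mdeg(\sY) \ge 1$, we have $2^m \le 2^{\irr(\sY)}$ and $m 2^m \le \irr(\sY)\, 2^{\irr(\sY)}$, so both estimates are dominated by the quantities in the statement.

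The only genuinely non-formal ingredient is the dimension-reduction step: it comes down to the fact that the top-dimensional components form a finite set on which $\sS$ acts with no nonempty proper invariant ``sub-union'', and this is exactly where Zariski density of $\Gamma$ in the connected group $\sH$ is used, via transitivity of left translation. The degree recursions are routine once the affine Bezout inequality is available, but it is essential to propagate $\mdeg$ and $\irr$ separately: bounding $\mdeg(\sV_{i+1}\sY)$ through the total degree $\deg(\sV_i\sY)^2$ rather than through $\mdeg(\sV_i\sY)^2$ would leak a spurious factor $\irr(\sY)^{2^{\irr(\sY)}}$ into the $\mdeg$-bound and destroy the stated inequality.
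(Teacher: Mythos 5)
Your proof is correct, and for the quantitative half (the Bezout recursions for $\mdeg$ and $\irr$ and their iteration) it coincides with the paper's argument almost line for line, except for the small but worthwhile clarification you add about taking closures in affine space before applying Bezout and about left translation preserving $d$, $\irr$, $\mdeg$ and $\deg$.

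Where you genuinely diverge from the paper is in the dimension-reduction step. The paper runs an induction on $\irr_{md}(\sY)$ and outsources the two crucial facts to Lemma 3.3 and Lemma 3.4 of \cite{EMO}: for the base case that some $s\in\sS$ drops the dimension when there is a single top-dimensional component, and for the inductive step that some $s$ drops either the dimension or the number of top-dimensional components. You instead prove the needed fact directly, and in a slightly sharper form: you observe that the set $\cC_i$ of top-dimensional components satisfies the exact identity $\cC_{i+1}=\cC_i\cap s_i\cC_i$, so in particular $\cC_{i+1}\subseteq\cC_i$, and that if no generator strictly shrinks $\cC_i$ then $\cC_i$ is stable under every $s\in\sS$ (by finiteness and the fact that translation is a bijection), hence the union $Z$ of its members is $\Gamma$-invariant, hence $\sH$-invariant since the stabilizer of a closed set is Zariski-closed, which forces $Z=\sH$ by transitivity of left translation on $\sH$, contradicting $Z\subseteq\sY\subsetneq\sH$. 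This single argument subsumes both cited EMO lemmas and yields $m\le\irr_{md}(\sY)$ by a simple greedy count rather than an induction. The trade-off is that the paper's proof is shorter on the page but less self-contained, while yours is entirely self-contained and makes visible exactly where Zariski density and connectedness of $\sH$ enter (and why the greedy bound is $\irr_{md}(\sY)$, not merely $\irr(\sY)$). Both are valid; your route would make a cleaner presentation if one did not want to lean on \cite{EMO}.
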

\begin{proof}
We start by proving the first part of the Lemma using induction on $\irr_{md}(\sY)$. Let $\sV_0\sY=\sY$. If $\irr_{md}(\sY) = 1$ then by Lemma 3.3 \cite{EMO} there exists $s_0\in \sS$ such that $d(\sY\cap s_0\sY)<d(\sY)$. Hence for $\sV_1\sY:=\sY\cap s_0 \sY$ we have $d(\sV_1\sY)<d(\sY)$ with $1\leq\irr_{md}(\sY)$ and we are done. By induction hypothesis on $\irr_{md}(\sY)$ we assume that for all $\sY$ with $\irr_{md}(\sY)\leq n$ there exists an integer $m\leq \irr_{md}(\sY)$ and a sequence of $m$ elements $s_0, s_1,\dots, s_{m-1}$ of $\sS$ so that if we define the following sequence of varieties: $\sV_0\sY=\sY$, $\sV_{i+1}\sY=\sV_i\sY\cap s_i \sV_iY$, for $0\leq i\leq m-1$, then $\sV_m\sY$ satisfies $d(\sV_m\sY)<d(\sY)$. Now let $\sY$ be such that $\irr_{md}(\sY)=n+1$. Then by Lemma 3.4 \cite{EMO} there exists $s\in \sS$ such that either $d(\sY\cap s\sY)<d(\sY)$ or $\irr_{md}(\sY\cap s\sY)<\irr_{md}(\sY)$. If $d(\sY\cap s\sY)<d(\sY)$ then for $\sV_1\sY:=\sY\cap s\sY$ we have $1\leq n+1$ and we are done. Otherwise, $d(\sY\cap s\sY)=d(\sY)$ and $\irr_{md}(\sY\cap s\sY)<\irr_{md}(\sY)$. Hence for $\sZ=\sY\cap s\sY$ we have $\irr_{md}(\sZ)\leq n$ and by induction hypothesis we obtain that there there exists an integer $m\leq \irr_{md}(\sZ)$ and a sequence of $m$ elements $s_0, s_1,\dots, s_{m-1}$ of $\sS$ so that if we define the following sequence of varieties: $\sV_0\sZ=\sZ$, $\sV_{i+1}\sZ=\sV_i\sZ\cap s_i \sV_iZ$, for $0\leq i\leq m-1$, then $\sV_m\sZ$ satisfies $d(\sV_m\sZ)<d(\sZ)$. Now, if we define $\sV_0\sY=\sY$ and $\sV_{i+1}\sY:=\sV_i\sZ$ then 
\[\sV_{i+1}\sY=\sV_i\sZ=\sV_{i-1}\sZ\cap s_{i-1} \sV_{i-1}\sZ=\sV_i\sY\cap s_{i-1} \sV_i\sY,\]
$d(\sV_{m+1}\sY)<d(\sZ)=d(\sY\cap s\sY)=d(\sY)$ and as $\irr_{md}(\sZ)<\irr_{md}(\sY)$ we obtain $m+1\leq\irr_{md}(\sZ)+1\leq\irr_{md}(\sY)$. Therefore, we obtain that for any proper subvariety $\sY$ of $\sH$ there exists an integer $m\leq\irr_{md}(\sY)$ and a sequence of $m$ elements $s_0, s_1,\dots, s_{m-1}$ of $\sS$ so that if we define the following sequence of varieties: $\sV_0\sY=\sY$, $\sV_{i+1}\sY=\sV_i\sY\cap s_i \sV_iY$, for $0\leq i\leq m-1$, then $\sV_m\sY$ satisfies $d(\sV_m\sY)<d(\sY)$. This concludes the first part of this lemma.

Now we prove the inequalities. Let $\sZ_{i,j}$ be the irreducible components of $\sV_i\sY$ for $1\leq j\leq n_i$. We observe that 
\begin{align*}
    \sV_{i+1}\sY=\left(\bigcup_{j=1}^{n_i}\sZ_{i,j}\right)\bigcap\left(\bigcup_{k=1}^{n_i}s_i\sZ_{i,k}\right)=\bigcup_{j=1}^{n_i}\bigcup_{k=1}^{n_i}(\sZ_{i,j}\cap s_i\sZ_{i,k}).
\end{align*}
Hence $n_{i+1}=\irr(\sV_{i+1}\sY)=\sum_{j=1}^{n_i}\sum_{k=1}^{n_i}\irr(\sZ_{i,j}\cap s_i\sZ_{i,k})$. We use the Generalized Bezout's Theorem (page 519 \cite{Sch}, Theorem 2.2 at page 251 \cite{Dl}, Proposition 2.3 at page 10 \cite{Fulton}) and obtain that for fixed $i,j$ and $k$:
\begin{align*}
   &\max\{\mdeg(\sZ_{i,j}\cap s_i\sZ_{i,k}), \irr(\sZ_{i,j}\cap s_i\sZ_{i,k})\}\\
   &\leq \deg{\sZ_{i,j}}\deg{s_i\sZ_{i,j}}=\deg{\sZ_{i,j}}^2\leq \mdeg{(\sV_i\sY)}^2.
\end{align*}
Therefore, it follows that $n_{i+1}\leq n_i^2\mdeg{(\sV_iY)}^2$ and 
\begin{align*}
    \mdeg{(\sV_{i+1}\sY)}=\max_{j,k}\{\mdeg{(\sZ_{i,j}\cap s_i\sZ_{i,k})}\}\leq \mdeg{(\sV_i\sY)}^2.
\end{align*}
We observe that $\irr(\sV_0Y)=\irr(\sY)\leq\irr(\sY)=\irr(\sY)^{2^0}\mdeg(\sY)^{0.2^0}$ and $\mdeg(\sV_0\sY)=\mdeg(\sY)\leq\mdeg(\sY)=\mdeg(\sY)^{2^0}$. Moreover, by induction hypothesis we assume that 
\begin{align*}
    \irr(\sV_i\sY)&\leq\irr(\sY)^{2^i}\mdeg(\sY)^{i.2^i},\\
    \mdeg(\sV_i\sY)&\leq\mdeg(\sY)^{2^i}.
\end{align*}
Then we obtain the following
\begin{align*}
    \irr(\sV_{i+1}\sY)=n_{i+1}&\leq n_i^2\mdeg{(\sV_i\sY)}^2\\
    &\leq\irr(\sY)^{2^i.2}\mdeg(\sY)^{i.2^i.2}\mdeg(\sY)^{2^i.2}\\
    &=\irr(\sY)^{2^{i+1}}\mdeg(\sY)^{(i+1).2^{i+1}},\\
    \mdeg(\sV_{i+1}\sY)&\leq\mdeg{(\sV_i\sY)}^2\leq\mdeg(\sY)^{2^i.2}=\mdeg(\sY)^{2^{i+1}}.
\end{align*}
Therefore, by induction we obtain that
\begin{align*}
    \irr(\sV_m(\sY))\leq \irr(\sY)^{2^m}\mdeg(\sY)^{m.2^m} ,\\
    \mdeg(\sV_m(\sY))\leq \mdeg(\sY)^{2^m}.
\end{align*}
Finally, we recall that $m\leq\irr_{md}(\sY)$ and conclude the inequalities by observing that $\irr_{md}(\sY)\leq\irr(\sY)$.
\end{proof}

Let us consider the following collection of functions $f_{x,y}:\N\to\N\times\N$ for $x, y\in\N$ such that $f_{x,y}(n):=(x_n,y_n)$ and 
\begin{align*}
    (x_0,y_0)&:= (x,y),\\
    (x_{n+1},y_{n+1})&:=(x_n^{2^{x_n}}y_n^{x_n2^{x_n}},y_n^{2^{x_n}}).
\end{align*}
We denote the projection onto the first coordinate from $\N\times\N$ to $\N$ by $\pi_1$ i.e. $\pi_1(a,b):=a$. Moreover, we define $M:\N\times\N\times\N\to\N$ as follows:
\begin{align*}
    M(x,y,n):=\sum_{i=0}^{n}\pi_1\circ f_{x,y}(i).
\end{align*}
\begin{remark}
Let $\sX$ be a proper subvariety of $\sGL_n(\C)$. Then by abuse of notation we denote: $M_\sX:=M(\irr(\sX),\mdeg(\sX),d(\sX))$.
\end{remark}
\begin{remark}
Let $a,b,A,B\in\N$. Then we say $(a,b)\leq(A,B)$ if and only if both $a\leq A$ and $b\leq B$.
\end{remark}
\begin{lemma}\label{lem:f}
Let $f_{x,y}$ be as above for $x, y\in\N$ with $f_{x,y}(n)=(x_n,y_n)$ and let $a,b,A,B\in\N$ be such that $(a,b)\leq (A^{2^A}B^{A2^A}, B^{2^A})$. Then for all non-negative integers $n$ we have $f_{a,b}(n)\leq f_{A,B}(n+1)$.
\end{lemma}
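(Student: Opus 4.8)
The plan is to isolate the single ``one-step'' map underlying the recursion that defines $f_{x,y}$ and to show that it is order-preserving. Let $F\colon\N\times\N\to\N\times\N$ be given by $F(u,v):=(u^{2^u}v^{u2^u},v^{2^u})$, so that the defining recursion reads $f_{x,y}(0)=(x,y)$ and $f_{x,y}(k+1)=F(f_{x,y}(k))$. The first observation is that the hypothesis $(a,b)\leq(A^{2^A}B^{A2^A},B^{2^A})$ is literally the statement $f_{a,b}(0)=(a,b)\leq F(A,B)=f_{A,B}(1)$, since $(A^{2^A}B^{A2^A},B^{2^A})$ is by definition $f_{A,B}(1)$; this will serve as the base case of an induction on $n$.

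The main step is to establish that $F$ is monotone for the coordinatewise partial order: if $(u,v)\leq(u',v')$ then $F(u,v)\leq F(u',v')$. I would derive this from a handful of elementary monotonicity facts on $\N$: the functions $t\mapsto 2^t$, $t\mapsto t2^t$ and $t\mapsto t^{2^t}$ are non-decreasing; for a fixed integer $c\geq 1$, $t\mapsto c^t$ is non-decreasing; and for a fixed exponent $e\in\N$, $s\mapsto s^e$ is non-decreasing. Chaining these yields $v^{2^u}\leq (v')^{2^u}\leq (v')^{2^{u'}}$, which controls the second coordinate, while $u^{2^u}\leq (u')^{2^u}\leq (u')^{2^{u'}}$ together with $v^{u2^u}\leq (v')^{u2^u}\leq (v')^{u'2^{u'}}$ multiply to control the first. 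The only point needing a moment's care is the behaviour at $0$: since $u^{2^u}=0$ whenever $u=0$, any apparent failure of monotonicity of the factor $v^{u2^u}$ at the boundary is absorbed by the vanishing prefactor $u^{2^u}$, so $F$ is in fact monotone on all of $\N\times\N$ (and in every application one has $a,b,A,B\geq 1$, where the issue does not even arise).

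With monotonicity of $F$ in hand, I would finish by induction on $n$. The case $n=0$ is the reformulated hypothesis $f_{a,b}(0)\leq f_{A,B}(1)$. Assuming $f_{a,b}(n)\leq f_{A,B}(n+1)$, applying $F$ to both sides and using monotonicity gives $f_{a,b}(n+1)=F(f_{a,b}(n))\leq F(f_{A,B}(n+1))=f_{A,B}(n+2)$, which closes the induction. The entire content of the lemma is the monotonicity of $F$; the only mild obstacle there is keeping track of which base or exponent is being enlarged at each stage (plus the trivial boundary checks at $0$), after which the proof is a two-line induction.
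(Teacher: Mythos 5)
Your proof is correct and takes essentially the same route as the paper's: an induction on $n$ with base case $f_{a,b}(0)\leq f_{A,B}(1)$ read off directly from the hypothesis, followed by applying the one-step recursion to both sides. The paper's inductive step silently invokes coordinatewise monotonicity of the map $(u,v)\mapsto(u^{2^u}v^{u2^u},v^{2^u})$, which you rightly isolate and verify (including the boundary behaviour at $0$); this is a justified detail that the paper leaves implicit.
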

\begin{proof}
We will prove this by induction on $n$. As $(a,b)\leq (A^{2^A}B^{A2^A}, B^{2^A})$ we obtain that $f_{a,b}(0)\leq f_{A,B}(1)$. Now by induction hypothesis we assume that for all $0\leq i\leq n-1$ we have
$f_{a,b}(i)\leq f_{A,B}(i+1)$. Then
\begin{align*}
    f_{a,b}(n)&=(a_{n-1}^{2^{a_{n-1}}}b_{n-1}^{a_{n-1}2^{a_{n-1}}},b_{n-1}^{2^{a_{n-1}}})\\
    &\leq (A_n^{2^{A_n}}B_n^{A_n2^{A_n}},B_n^{2^{A_n}})=f_{A,B}(n+1).
\end{align*}
Hence we conclude our result using induction.
\end{proof}

\begin{lemma}\label{lem:M}
Let $\Gamma\subset \sGL_n(\C)$ be a subgroup generated by a finite set $\sS$, let $\sH$ be the Zariski closure of $\Gamma$, which is also assumed to be Zariski-connected and let $\sX$ be a proper subvariety of $\sH$. Moreover, let $m\leq \irr(\sX)$ and $\{s_i\}_{i=0}^{m-1}\subset \sS$ be such that $d(\sV_m\sX)<d(\sX)$, where $\sV_m\sX$ is defined inductively as follows: $\sV_0\sX=\sX$ and $\sV_{i+1}\sX=\sV_i\sX\cap s_i \sV_i\sX$, for $0\leq i\leq m-1$. Then
\[M_{\sV_m\sX}+\irr(\sX)\leq M_\sX.\]
\end{lemma}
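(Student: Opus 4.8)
The plan is to compare $M_{\sV_m\sX}$ with $M_\sX$ by relating the orbits of the function $f_{x,y}$ started at the invariants of $\sV_m\sX$ versus the invariants of $\sX$, using the inequalities proved in Lemma \ref{lem:ineq} and the monotonicity statement of Lemma \ref{lem:f}. First I would record that, by Lemma \ref{lem:ineq} (together with the hypothesis that $s_0,\dots,s_{m-1}$ and $\sV_m\sX$ are exactly the data produced there, or at least satisfy the same inequalities — this needs a quick check that the bounds on $\irr$ and $\mdeg$ hold in our situation, which they do since the recursion $n_{i+1}\le n_i^2\mdeg(\sV_i\sX)^2$, $\mdeg(\sV_{i+1}\sX)\le\mdeg(\sV_i\sX)^2$ is purely formal and $m\le\irr_{md}(\sX)\le\irr(\sX)$), we have
\[
\bigl(\irr(\sV_m\sX),\mdeg(\sV_m\sX)\bigr)\;\le\;\bigl(\irr(\sX)^{2^{\irr(\sX)}}\mdeg(\sX)^{\irr(\sX)2^{\irr(\sX)}},\,\mdeg(\sX)^{2^{\irr(\sX)}}\bigr).
\]
The right-hand side is precisely $f_{\irr(\sX),\mdeg(\sX)}(1)$ by the defining recursion of $f_{x,y}$.

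Next I would invoke Lemma \ref{lem:f} with $(a,b)=(\irr(\sV_m\sX),\mdeg(\sV_m\sX))$ and $(A,B)=(\irr(\sX),\mdeg(\sX))$, whose hypothesis is exactly the displayed inequality above. This yields $f_{a,b}(i)\le f_{A,B}(i+1)$ for all $i\ge 0$, hence $\pi_1\circ f_{a,b}(i)\le\pi_1\circ f_{A,B}(i+1)$ for all $i$, since $\pi_1$ is monotone with respect to the partial order $\le$ on $\N\times\N$. Summing over $i$ from $0$ to $d(\sV_m\sX)$ gives
\[
M_{\sV_m\sX}=\sum_{i=0}^{d(\sV_m\sX)}\pi_1\circ f_{a,b}(i)\;\le\;\sum_{i=0}^{d(\sV_m\sX)}\pi_1\circ f_{A,B}(i+1)=\sum_{i=1}^{d(\sV_m\sX)+1}\pi_1\circ f_{A,B}(i).
\]

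Finally I would use the key dimension-drop $d(\sV_m\sX)<d(\sX)$, i.e. $d(\sV_m\sX)+1\le d(\sX)$, to extend the last sum up to $d(\sX)$ and then split off the $i=0$ term:
\[
M_{\sV_m\sX}\;\le\;\sum_{i=1}^{d(\sX)}\pi_1\circ f_{A,B}(i)=M_\sX-\pi_1\circ f_{A,B}(0)=M_\sX-\pi_1(\irr(\sX),\mdeg(\sX))=M_\sX-\irr(\sX),
\]
which is the claimed inequality $M_{\sV_m\sX}+\irr(\sX)\le M_\sX$. (Here I used that all terms $\pi_1\circ f_{A,B}(i)$ are nonnegative, so padding the sum up to $d(\sX)$ only increases it.) The main obstacle, such as it is, is purely bookkeeping: making sure the indices line up — the shift by one coming from Lemma \ref{lem:f} must be exactly absorbed by the strict dimension drop, and one must be slightly careful that the $\sV_m\sX$ in the hypothesis of this lemma genuinely inherits the $\irr$/$\mdeg$ bounds from Lemma \ref{lem:ineq}; if the statement only asserts $d(\sV_m\sX)<d(\sX)$ without the auxiliary bounds, one re-derives them from the same Bezout estimate used in the proof of Lemma \ref{lem:ineq}. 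No genuinely new idea is needed beyond combining the three preceding lemmas.
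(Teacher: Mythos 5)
Your proof is correct and follows essentially the same route as the paper's: apply Lemma~\ref{lem:ineq} to bound the invariants of $\sV_m\sX$ by $f_{\irr(\sX),\mdeg(\sX)}(1)$, invoke Lemma~\ref{lem:f} to shift indices, sum up to $d(\sV_m\sX)$, and use the dimension drop $d(\sV_m\sX)+1\le d(\sX)$ to pad the sum and split off the $i=0$ term, which is $\irr(\sX)$. Your side remark that the $\irr$/$\mdeg$ bounds of Lemma~\ref{lem:ineq} apply here is the right thing to check, and it holds because the bounding recursion depends only on the intersection structure $\sV_{i+1}\sX=\sV_i\sX\cap s_i\sV_i\sX$ and on $m\le\irr(\sX)$ (not on the particular $s_i$ or on strictness of dimension drops at intermediate stages), matching the hypothesis of the lemma.
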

\begin{proof}
As $d(\sV_m\sX)<d(\sX)$ and they are integers, we get $d(\sV_m\sX)+1\leq d(\sX)$. Now we use Lemmas \ref{lem:ineq}, \ref{lem:f} to obtain the following: 
\begin{align*}
    M_{\sV_m\sX}&=\sum_{i=0}^{d(\sV_m\sX)}\pi_1\circ f_{\irr(\sV_m\sX),\mdeg(\sV_m\sX)}(i)\leq \sum_{i=0}^{d(\sV_m\sX)}\pi_1\circ f_{\irr(\sX),\mdeg(\sX)}(i+1)\\
    &=\sum_{i=1}^{d(\sV_m\sX)+1}\pi_1\circ f_{\irr(\sX),\mdeg(\sX)}(i)\leq\sum_{i=1}^{d(\sX)}\pi_1\circ f_{\irr(\sX),\mdeg(\sX)}(i)\\
    &=M_\sX-\irr(\sX).
\end{align*}
This concludes our result.
\end{proof}

%%%%%%%%%%%%%%%%%%%%%%%%%%%%%%%%%%%%%%%%%%%%%%%%%%%%%%%%%%%%%%%%%%%%%%%%%%%%%%%%%%%%%%%%%
\subsection{Explicit bounds}
%%%%%%%%%%%%%%%%%%%%%%%%%%%%%%%%%%%%%%%%%%%%%%%%%%%%%%%%%%%%%%%%%%%%%%%%%%%%%%%%%%%%%%%%%

In this subsection, we use the inequalities from Subsection \ref{subsec.ineq} and obtain the explicit bounds that will be needed in proving the main results of this article.

\begin{proposition}\label{prop.main}
Let $\Gamma\subset \sGL_n(\C)$ be any finitely generated subgroup, let $\sH$ be the Zariski closure of $\Gamma$, which is also assumed to be Zariski-connected and let $\sX$ be a proper subvariety of $\sH$. Then for any finite generating set $\sS$ of $\Gamma$, we have $\sB_\sS(M_\sX)\not\subset \sX$.
\end{proposition}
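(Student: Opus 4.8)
The plan is to induct on the dimension $d(\sX)$ of the proper subvariety $\sX \subset \sH$, using Lemma \ref{lem:ineq} to perform one step of dimension reduction and Lemma \ref{lem:M} to control the accounting of the constant $M_\sX$ along the way. First I would set up the base case: if $d(\sX) = 0$ then $\sX$ is a finite set of points, and since $\sH$ is Zariski-connected (hence irreducible, in particular infinite as a positive-dimensional variety — or if $\dim \sH = 0$ then $\sH$ is a point and has no proper subvariety), the identity element together with at least one generator already escapes $\sX$; one checks that $M_\sX \geq 1$ suffices here, since the sum defining $M(\irr(\sX),\mdeg(\sX),0)$ has the single term $\pi_1 \circ f_{x,y}(0) = x = \irr(\sX) \geq 1$.

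For the inductive step, assume the statement holds for all proper subvarieties of dimension $< d(\sX)$. Apply Lemma \ref{lem:ineq} to $\sY = \sX$: it produces an integer $m \leq \irr_{md}(\sX) \leq \irr(\sX)$ and elements $s_0, \dots, s_{m-1} \in \sS$ such that the iterated intersection $\sV_m\sX$ satisfies $d(\sV_m\sX) < d(\sX)$. The key observation is that $\sV_m\sX$ is cut out by translates $s_{i_1}\cdots s_{i_j} \sX$ where each word $s_{i_1}\cdots s_{i_j}$ has length at most $m$; concretely, an element $\gamma$ lies in $\sV_m\sX$ only if a whole ball's worth of translates $w\gamma$, with $w$ ranging over words of length $\leq m$, lie in $\sX$ — equivalently, if $\gamma \notin \sV_m\sX$ then there is a word $w$ with $|w|_\sS \leq m$ such that $w\gamma \notin \sX$, i.e. $\gamma \notin w^{-1}\sX$. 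Now I would invoke the inductive hypothesis on $\sV_m\sX$ (shrinking to an irreducible component of top dimension if one wants to stay honest about properness inside $\sH$, or just noting $\sV_m\sX \subsetneq \sH$ since $d(\sV_m\sX) < d(\sX) \leq \dim \sH$): there is a word $v$ with $|v|_\sS < M_{\sV_m\sX}$ such that $v \notin \sV_m\sX$. Unwinding the previous observation at $\gamma = v$, there is a word $w$, $|w|_\sS \leq m \leq \irr(\sX)$, with $wv \notin \sX$; then $wv \in \sB_\sS(M_{\sV_m\sX} + \irr(\sX))$, and Lemma \ref{lem:M} gives $M_{\sV_m\sX} + \irr(\sX) \leq M_\sX$, so $wv \in \sB_\sS(M_\sX)$ witnesses $\sB_\sS(M_\sX) \not\subset \sX$.

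The step I expect to require the most care is the combinatorial bookkeeping connecting membership in $\sV_m\sX$ to short words: one must verify that the recursion $\sV_{i+1}\sX = \sV_i\sX \cap s_i \sV_i\sX$ unfolds so that $\gamma \in \sV_m\sX$ forces $w\gamma \in \sX$ for all words $w$ in the "prefixes" generated by $s_0, \dots, s_{m-1}$ of length $\leq m$, and hence that failure of membership propagates back up to a single short word of length $\leq m$ — this is exactly the mechanism in Lemma 3.3 of \cite{EMO}, and I would lean on it rather than rederive it. A secondary subtlety is ensuring the inductive hypothesis is applied to a genuine proper subvariety of $\sH$ (not merely of some smaller ambient space): since $\sV_m\sX \subseteq \sX \subsetneq \sH$, this is automatic, and the constant $M_{\sV_m\sX}$ is well-defined by the earlier Remark. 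With these two points in hand the induction closes and the proposition follows.
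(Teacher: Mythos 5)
Your proposal follows the same strategy as the paper's proof: induction on $d(\sX)$, with Lemma \ref{lem:ineq} driving the dimension reduction and Lemma \ref{lem:M} providing the accounting for $M_\sX$. Your inductive step is correct and is essentially the paper's argument (you exhibit a witness directly; the paper argues by contradiction, but the content is the same: expand $\sV_m\sX=\bigcap_i w_i\sX$ with $\lvert w_i\rvert_\sS\le m$, take a short word $v$ escaping $\sV_m\sX$ supplied by the inductive hypothesis, and use Lemma \ref{lem:M} to check that $w_i^{-1}v$ still lies in $\sB_\sS(M_\sX)$).

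The base case, however, is not handled correctly. You assert that for $d(\sX)=0$ ``the identity element together with at least one generator already escapes $\sX$,'' and that $M_\sX\ge 1$ therefore suffices. That claim is false: take $\Gamma=\langle 2\rangle\subset\sGL_1(\C)$ with symmetric generating set $\sS=\{2,2^{-1}\}$ (Zariski closure $\sH=\C^{*}$, connected), and let $\sX=\{1,2,2^{-1}\}$. Then $\{e\}\cup\sS=\sX$, so neither the identity nor any generator escapes, even though $\sX$ is a proper zero-dimensional subvariety. The correct treatment of the base case is the very mechanism you use in the inductive step, applied once more: for $d(\sX)=0$, Lemma \ref{lem:ineq} gives $d(\sV_{m_1}\sX)<0$, so $\sV_{m_1}\sX$ is \emph{empty}, and $\emptyset=\sV_{m_1}\sX=\bigcap_i w_i\sX$ with $\lvert w_i\rvert_\sS\le m_1\le\irr(\sX)$; if $\sB_\sS(M_\sX)\subset\sX$ then each $w_i^{-1}\in\sX$, which forces $e\in\bigcap_i w_i\sX=\emptyset$, a contradiction. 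This is in fact how the paper handles its base case (which it states for $d(\sX)=1$); your instinct to anchor the induction at $d(\sX)=0$ is reasonable, but the dimension-count shortcut you gave in its place does not work and must be replaced by the emptiness argument.
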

\begin{proof}
Let $\sS$ be a finite generating set of $\Gamma$. Let $\sX_0\supset \sX_1\supset \dots \supset \sX_k$ be a decreasing tower of subvarieties such that $\sX_0=\sX$, $\sX_k$ is the empty variety and $\sX_{i+1}$ is a subvariety obtained from $\sX_i$ by using Lemma \ref{lem:ineq} i.e. $\sX_{i+1}=\sV_{m_i}(\sX_i)$, $m_i\leq\irr(\sX_i)$ and $d(\sX_{i+1})<d(\sX_i)$. Clearly, $k$ is bounded above by $d(\sX)$. 

We will prove this result by induction on $d(\sX)$. If $d(\sX)=1$ then $\sX_1=\sV_{m_1}\sX$ is an empty variety and we obtain
\begin{align*}
    m_1\leq\irr(\sX)\leq \irr(\sX) + M_{\sV_{m_1}\sX}\leq M_\sX.
\end{align*}
Also, for some words $\{w_i\}_{i=1}^t\subset\Gamma$ with word length not bigger than $m_1$ with respect to $\sS$, we have $\emptyset=\sV_{m_1}\sX=\cap_{i=1}^tw_i\sX$. Now if $\sB_\sS(M_\sX)\subset \sX$ then we have $\{e,w_i^{-1}\}_{i=1}^t\subset \sX$ and hence we obtain the following contradiction: $\{e\}\subset\cap_{i=1}^tw_i\sX=\emptyset$. Therefore, $\sB_\sS(M_\sX)\not\subset \sX$. 

By induction hypothesis let us assume that for all $\sX\subset \sH$ with $d(\sX)\leq n$ we have $\sB_\sS(M_\sX)\not\subset \sX$. Now let $\sX\subset \sH$ with $d(\sX)= n+1$. We use Lemma \ref{lem:ineq} and obtain that there exists an integer $m\leq\irr_{md}(\sX)$ and a sequence of $m$ elements $s_0, s_1,\dots, s_{m-1}$ of $\sS$ so that if we define the following sequence of varieties: $\sV_0\sX=\sX$, $\sV_{i+1}\sX=\sV_i\sX\cap s_i \sV_iX$, for $0\leq i\leq m-1$, then $\sV_m\sX$ satisfies $d(\sV_m\sX)<d(\sX)$. Hence, $d(\sV_m\sX)\leq d(\sX)-1= n$ and by the induction hypothesis we obtain that $\sB_\sS(M_{\sV_m\sX})\not\subset \sV_m\sX$. We want to show that $\sB_\sS(M_\sX)\not\subset \sX$. We will prove it via contradiction. If possible let us assume that $\sB_\sS(M_{\sV_m\sX})\not\subset \sV_m\sX$ and $\sB_\sS(M_\sX)\subset \sX$. As $\sB_\sS(M_{\sV_m\sX})\not\subset \sV_m\sX$, there exists a word $w\in\Gamma$ of word length less than $M_{\sV_m\sX}$ with respect to $\sS$, such that $w\not\in \sV_m\sX$. Moreover, there exist words $\{w_i\}_{i=1}^t\subset\Gamma$ with word length not bigger than $m$ with respect to $\sS$, such that $\sV_m\sX=\cap_{i=1}^tw_i\sX$. We note that the word length of the words $\{w_i^{-1}w\}_{i=1}^t$ is not bigger than $m+M_{\sV_m\sX}$ and hence by Lemmas \ref{lem:ineq} and \ref{lem:M} is bounded by $M_\sX$. Therefore, $\{w_i^{-1}w\}_{i=1}^t\subset \sB_\sS(M_\sX)\subset \sX$ and we obtain that
$\{w\}\subset\cap_{i=1}^tw_i\sX=\sV_m(\sX)$, a contradiction. We conclude that for $\sX\subset \sH$ with $d(\sX)= n+1$ we have $\sB_\sS(M_\sX)\not\subset \sX$. 

Finally, using induction on dimension of $\sX$ we conclude our result.
\end{proof}

\begin{proposition}\label{prop.linfinite}
Let $\sG$ be a connected semisimple real algebraic Lie group with trivial center and without compact factors. Let $\Gamma$ be a finitely generated group and let $\rho_1,\rho_2: \Gamma \to \sG$ be two representations such that $\rho_1(\Gamma)$, $\rho_2(\Gamma)$ are Zariski dense in $\sG$. Moreover, let $\sX$ be the Zariski closure of $(\rho_1,\rho_2)(\Gamma)$ inside $\sG\times\sG$. Then the following holds:
\begin{enumerate}
    \item There exist two normal subgroups $\sN_1,\sN_2$ of $\sG$ and a smooth isomorphism $\sigma:\sG/\sN_1\to\sG/\sN_2$ such that 
    \[\sX=\{(g,h)\mid h\sN_2=\sigma(g\sN_1)\}.\]
    \item Let $\cS$ be the set of all $\sY\subset\sG\times\sG$ for which there exist two normal subgroups $\sN_1,\sN_2$ of $\sG$ and a smooth isomorphism $\sigma:\sG/\sN_1\to\sG/\sN_2$ with
    \[\sY=\{(g,h)\mid h\sN_2=\sigma(g\sN_1)\}\]
    then $\max\{M_\sY\mid\sY\in\cS\}$ exists and is finite.
\end{enumerate}
\end{proposition}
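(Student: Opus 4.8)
The plan is to obtain (1) from Goursat's lemma applied to the Zariski‑closed subgroup $\sX\subseteq\sG\times\sG$, and (2) from the finiteness of the lattice of normal subgroups of $\sG$ together with a translation‑invariance argument controlling $\mdeg(\sY)$.

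\textbf{Part (1).} First I would note that $\sX$, being the Zariski closure of the image of the homomorphism $(\rho_1,\rho_2)\colon\Gamma\to\sG\times\sG$, is a Zariski‑closed subgroup, and that the two coordinate projections restrict on $\sX$ to surjections onto $\sG$: each image is a closed subgroup of $\sG$ containing the Zariski‑dense subgroup $\rho_i(\Gamma)$. I would then set $\sN_1:=\{g\in\sG\mid(g,e)\in\sX\}$ and $\sN_2:=\{h\in\sG\mid(e,h)\in\sX\}$, which are Zariski‑closed subgroups and are \emph{normal} in $\sG$, since for $k\in\sG$ surjectivity of the first projection gives some $(k,k')\in\sX$ and conjugating $(g,e)\in\sX$ by $(k,k')$ returns an element of $\sX$. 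Goursat's lemma then furnishes $\sigma\colon\sG/\sN_1\to\sG/\sN_2$ by $\sigma(g\sN_1):=h\sN_2$ whenever $(g,h)\in\sX$; I would check it is well defined, a homomorphism, surjective because the second projection is, and injective because $(g,h)\in\sX$ with $h\in\sN_2$ forces $(g,e)\in\sX$, that the maps $(g,h)\mapsto g\sN_1$ and $(g,h)\mapsto h\sN_2$ on $\sX$ have the same kernel so $\sigma$ descends to a morphism of algebraic groups with algebraic inverse, hence an isomorphism and in particular smooth, and finally that $\sX=\{(g,h)\mid h\sN_2=\sigma(g\sN_1)\}$, which follows from the definition of $\sigma$ together with $\sN_1\times\{e\}\subseteq\sX$ and $\{e\}\times\sN_2\subseteq\sX$.

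\textbf{Part (2).} Since $\sG$ is connected semisimple with trivial center, it is a direct product $\sG_1\times\cdots\times\sG_k$ of connected simple adjoint groups, whose normal algebraic subgroups are exactly the partial products $\prod_{i\in I}\sG_i$ with $I\subseteq\{1,\dots,k\}$; in particular there are finitely many pairs $(\sN_1,\sN_2)$. For a fixed such pair, each $\sG/\sN_j$ is again connected semisimple with trivial center, so $\mathrm{Aut}(\sG/\sN_j)$ has finitely many connected components with identity component $\mathrm{Inn}(\sG/\sN_j)$; hence the set of smooth isomorphisms $\sG/\sN_1\to\sG/\sN_2$, when nonempty, is a torsor under $\mathrm{Aut}(\sG/\sN_2)$ and so a union of finitely many connected pieces. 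I would then show that $(\irr(\sY),\mdeg(\sY),d(\sY))$ depends only on $(\sN_1,\sN_2)$ and on the piece containing $\sigma$: each $\sY=\sY_\sigma$ is a connected algebraic group, as it contains the connected normal subgroup $\sN_1\times\sN_2$ with connected quotient $\sY/(\sN_1\times\sN_2)\cong\sG/\sN_1$, hence is irreducible with $\irr(\sY)=1$, $\mdeg(\sY)=\deg(\sY)$ and $d(\sY)=\dim\sG+\dim\sN_2$; and if $\sigma'$ differs from $\sigma$ by post‑composition with conjugation by some $a\sN_2$, then $\sY_{\sigma'}$ is the image of $\sY_\sigma$ under the automorphism $(g,h)\mapsto(g,a^{-1}ha)$ of $\sG\times\sG$, which is the restriction of a linear automorphism of the ambient matrix space and therefore degree‑preserving, so $\deg(\sY_{\sigma'})=\deg(\sY_\sigma)$. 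Thus $\{(\irr(\sY),\mdeg(\sY),d(\sY))\mid\sY\in\cS\}$ is finite; since $M$ is a function on $\N\times\N\times\N$, the set $\{M_\sY\mid\sY\in\cS\}$ is then finite, so its maximum exists and is finite.

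\textbf{Main obstacle.} The only genuinely non‑formal point is bounding $\mdeg(\sY)$: the family $\cS$ is a priori infinite because the isomorphisms $\sigma$ vary in positive‑dimensional families, so one must rule out the degree being unbounded. The resolution above is that inside a connected family of $\sigma$'s the graphs $\sY_\sigma$ differ only by a conjugation in the second factor — a linear, hence degree‑preserving, operation — so the only finiteness inputs are $\pi_0(\mathrm{Aut}(\sG/\sN_2))$ and the lattice of normal subgroups of $\sG$, both standard for $\sG$ semisimple with trivial center. A minor subtlety is that ``smooth isomorphism'' yields nothing beyond algebraic isomorphisms here, since a smooth automorphism of a semisimple Lie group is determined by its differential, an automorphism of the (algebraic) Lie algebra, so the component count is unchanged.
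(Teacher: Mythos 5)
Your proof is correct and follows essentially the same route as the paper: Goursat's lemma for part (1), and finiteness of the lattice of normal subgroups of $\sG$ together with the observation that changing $\sigma$ acts by degree-preserving linear transformations for part (2). You are noticeably more explicit in part (2) than the paper — which argues tersely via $\mathsf{Isom}(\sG_1,\sG_2)\subset\sHom(\fg_1,\fg_2)$ — by identifying the finitely many components of $\mathrm{Aut}(\sG/\sN_2)$, checking irreducibility of each $\sY_\sigma$, and verifying that inner automorphisms act by conjugation and hence linearly on the ambient matrix space; this fills in a step the paper glosses over.
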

\begin{proof}
We will prove this result in two parts.
 
$\diamond$ Let $\pi_i: \sG \times \sG \to \sG$ denote the projection onto the $i^{th}$-factor for $i=1,2$. We observe that for any $\gamma\in\Gamma$ the group $(\rho_1,\rho_2)(\Gamma)$ is normalized by $(\rho_1(\gamma),\rho_2(\gamma))$. Hence $\sX$ is normalized by $(\rho_1(\gamma),\rho_2(\gamma))$ for all $\gamma\in\Gamma$. It follows that for all $i\in\{1,2\}$, $\pi_i(\sX)$ is a normal subgroup of $\sG$. Also, as $\pi_i(\sX)\supset\rho_i(\Gamma)$ with $\rho_i(\Gamma)$ Zariski dense in $\sG$ we obtain that $\pi_1(\sX)=\sG=\pi_2(\sX)$. We denote the kernels of $\left.\pi_1\right|_\sX,\left.\pi_2\right|_\sX$ by $\sN_1,\sN_2$ respectively and observe that $\sN_1=\sX\cap(\{e\}\times\sG)$, $\sN_2=\sX\cap(\sG\times\{e\})$ and
\begin{align*}
    (e,h)\sN_1(e,h)^{-1}&=(g,h)\sN_1(g,h)^{-1}=\sN_1,\\
    (g,e)\sN_2(g,e)^{-1}&=(g,h)\sN_2(g,h)^{-1}=\sN_2,
\end{align*}
for all $(g,h)\in\sX$. It follows that $\sN_1$ is normal inside $\{e\}\times\sG$ and $\sN_2$ is normal inside $\sG\times\{e\}$. Hence by Goursat's Lemma \cite{Gour} we obtain that the image of $\sX$ inside $(\sG\times\sG)/(\sN_1\sN_2)$ is the graph of an isomorphism $\sigma: \sG/\sN_1\to\sG/\sN_2$. Moreover, as $\pi_1$ and $\pi_2$ are smooth maps and $\sX$ is an algebraic group, we deduce that $\sigma$ is smooth.

$\diamond$ As $\sN_1,\sN_2$ are normal subgroups of a connected semisimple real algebraic Lie group with trivial center and without compact factors, we obtain that there are finitely many such normal subgroups (see Theorem 4.3 of \cite{Milne}). Moreover, as any smooth isomorphism between two connected Lie groups $\sG_1$ and $\sG_2$ uniquely determine an invertible linear transformation between $\fg_1$ and $\fg_2$, we have $\mathsf{Isom}(\sG_1,\sG_2)\subset\sHom(\fg_1,\fg_2)$. We also know that invertible linear transformations do not change the number of irreducible components, the degrees and the dimensions of an algebraic variety. Hence $\{M_\sY\mid\sY\in\cS\}$ is a finite set and we conclude that $\max\{M_\sY\mid\sY\in\cS\}$ exists and is finite.
\end{proof}

\begin{proposition}\label{prop.affinite}
Let $\sG$ be a connected semisimple real algebraic Lie group with trivial center and without compact factors. Let $\sV$ be a finite dimensional vector space and $\tR:\sG\to\sGL(\sV)$ be an irreducible algebraic representation. Let $\Gamma$ be a finitely generated group and let $\rho_1,\rho_2: \Gamma \to \sG$ be two representations such that $\rho_1(\Gamma)$, $\rho_2(\Gamma)$ are Zariski dense in $\sG\ltimes_\tR\sV$. Moreover, let $\sX$ be the Zariski closure of $(\rho_1,\rho_2)(\Gamma)$ inside $(\sG\ltimes_\tR\sV)\times(\sG\ltimes_\tR\sV)$. Then the following holds:
\begin{enumerate}
    \item There exist two normal subgroups $\sN_1,\sN_2$ of $\sG\ltimes_\tR\sV$ and a smooth isomorphism $\sigma:(\sG\ltimes_\tR\sV)/\sN_1\to(\sG\ltimes_\tR\sV)/\sN_2$ such that
    \[\sX=\{(g,X,h,Y)\mid (h,Y)\sN_2=\sigma((g,X)\sN_1)\}.\]
    \item Let $\cS$ be the set of all $\sY\subset\sG\times\sG$ for which there exist two normal subgroups $\sN_1,\sN_2$ of $\sG\ltimes_\tR\sV$ and a smooth isomorphism $\sigma:(\sG\ltimes_\tR\sV)/\sN_1\to(\sG\ltimes_\tR\sV)/\sN_2$ with
    \[\sY=\{(g,X,h,Y)\mid (h,Y)\sN_2=\sigma((g,X)\sN_1)\}\]
    then $\max\{M_\sY\mid\sY\in\cS\}$ exists and is finite.
\end{enumerate}
\end{proposition}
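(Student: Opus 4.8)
The plan is to run the proof of Proposition~\ref{prop.linfinite} almost unchanged, the one genuinely new ingredient being a finiteness count for the normal algebraic subgroups of the non-reductive group $\sG\ltimes_\tR\sV$. Part~(1) is literally the proof of Proposition~\ref{prop.linfinite}(1) with $\sG$ everywhere replaced by the connected linear algebraic group $\sG\ltimes_\tR\sV$: the set $(\rho_1,\rho_2)(\Gamma)$, hence its Zariski closure $\sX$, is normalised by every $(\rho_1(\gamma),\rho_2(\gamma))$; so each $\pi_i(\sX)$ is normal in $\sG\ltimes_\tR\sV$, and containing the Zariski-dense $\rho_i(\Gamma)$ it equals $\sG\ltimes_\tR\sV$; Goursat's lemma applied to $\sX$ with $\sN_1=\sX\cap(\{e\}\times(\sG\ltimes_\tR\sV))$ and $\sN_2=\sX\cap((\sG\ltimes_\tR\sV)\times\{e\})$ exhibits $\sX$ as the stated twisted graph, and $\sigma$ is smooth since the two projections are.

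For Part~(2), exactly as in Proposition~\ref{prop.linfinite}(2), it suffices to prove: \textbf{(a)} $\sG\ltimes_\tR\sV$ has finitely many normal algebraic subgroups; and \textbf{(b)} for each pair $(\sN_1,\sN_2)$ of normal algebraic subgroups of $\sG\ltimes_\tR\sV$, the integer $M_{\sY_\sigma}$ stays bounded as $\sigma$ ranges over $\mathsf{Isom}\bigl((\sG\ltimes_\tR\sV)/\sN_1,(\sG\ltimes_\tR\sV)/\sN_2\bigr)$, where $\sY_\sigma$ is the corresponding twisted graph. For (a): a normal algebraic subgroup $N$ has $N\cap\sV$ a $\tR(\sG)$-invariant subspace of $\sV$, hence $0$ or $\sV$ by irreducibility of $\tR$. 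If $\sV\subseteq N$, then $N$ is the preimage under $\sG\ltimes_\tR\sV\to\sG$ of a normal subgroup of $\sG$, of which there are finitely many by Theorem~4.3 of \cite{Milne}. If $N\cap\sV=0$, then $N$ projects isomorphically onto a normal subgroup $\sG_N\trianglelefteq\sG$, so $N=\{(g,\phi(g)):g\in\sG_N\}$ for a morphism $\phi\colon\sG_N\to\sV$; conjugating $(g,\phi(g))$ by $(e,w)$ and using that $N$ meets the fibre over $g$ only in $(g,\phi(g))$ forces $\tR(g)w=w$ for all $w\in\sV$, i.e.\ $\sG_N\subseteq\ker\tR$, so $\phi$ is a morphism from the connected semisimple (hence perfect) group $\sG_N$ to the abelian group $\sV$ and therefore vanishes; thus $N=\sG_N\times\{0\}$. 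In all cases there are finitely many such $N$, and each is connected; in particular the $\sN_1,\sN_2$ of Part~(1) are connected.

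For (b): the quotients $Q_i:=(\sG\ltimes_\tR\sV)/\sN_i$ are connected linear algebraic groups, so $\mathsf{Isom}(Q_1,Q_2)$ — empty or a coset of the linear algebraic group $\mathsf{Aut}(Q_1)$ — has finitely many Zariski-connected components. Fix a component and a point $\sigma_0$ in it; for $\sigma$ in that component, $\sigma=\sigma_0\circ\tau$ with $\tau\in\mathsf{Aut}^0(Q_1)$, and a direct computation gives $\sY_\sigma=(\widetilde\tau^{-1}\times\mathrm{id})(\sY_{\sigma_0})$ for any lift $\widetilde\tau$ of $\tau$ to an automorphism of $\sG\ltimes_\tR\sV$ (such lifts exist, since $\mathsf{Aut}^0(Q_1)$ is generated by inner automorphisms together with the Schur scalings $v\mapsto dv$, $d$ in the centraliser of $\tR(\sG)$ in $\sGL(\sV)$, which occur when $Q_1$ is not semisimple). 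As $\sN_1,\sN_2$ are connected, each $\sY_\sigma$ is a connected algebraic subgroup of $(\sG\ltimes_\tR\sV)^2$, so $\irr(\sY_\sigma)=1$ and $d(\sY_\sigma)=\dim(\sG\ltimes_\tR\sV)+\dim\sN_2$ are independent of $\sigma$; and $\mdeg(\sY_\sigma)=\deg(\sY_\sigma)$ stays bounded, since $\sigma\mapsto\sY_\sigma$ is an algebraic family over the finite-type base $\mathsf{Isom}(Q_1,Q_2)$ and so its fibre degrees take finitely many values (equivalently: the lifts $\widetilde\tau$ are conjugations in a suitable $\sGL_N$ — using a faithful linear model of $\sG\ltimes_\tR\sV$ with $\sV$ as an affine block, inner automorphisms being conjugations by elements of $\sG\ltimes_\tR\sV$ and $v\mapsto dv$ being conjugation by $\operatorname{diag}(d,1)$ — which preserve $d(\cdot)$, $\irr(\cdot)$ and $\mdeg(\cdot)$). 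Hence $M_{\sY_\sigma}$ is constant on each component, and taking the maximum over the finitely many $(\sN_1,\sN_2)$ and the finitely many components gives the result.

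The step I expect to be the main obstacle is \textbf{(a)}: since $\sG\ltimes_\tR\sV$ is not semisimple there is no ready-made finiteness statement for its normal subgroups, and the argument works precisely because of the standing hypotheses — irreducibility of $\tR$ pins down $N\cap\sV$ (and genuinely fails for reducible $\tR$), while perfectness of connected semisimple normal subgroups of $\sG$ kills $\phi$. A subsidiary point with no analogue in the semisimple setting of Proposition~\ref{prop.linfinite} is that $\mathsf{Aut}(Q_i)$ can have positive-dimensional identity component even though $Q_i$ has trivial centre (the Schur scalings of $\sV$); one must check these automorphisms still preserve degrees, which the affine linear model above makes transparent.
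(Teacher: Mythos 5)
Your proof is correct and structurally parallel to the paper's, but at two points you do more work than the paper chooses to, and that is worth recording. Part~(1) coincides with the paper's argument verbatim (Goursat applied to the Zariski closure, with $\sG\ltimes_\tR\sV$ replacing $\sG$). For Part~(2) the paper cites Proposition~A.2 of \cite{Ghosh5} for the classification of normal subgroups of $\sG\ltimes_\tR\sV$ (trivial, or of the form $\sG_i\ltimes_\tR\sV$ with $\sG_i\trianglelefteq\sG$), whereas you rederive this from scratch: you observe that $N\cap\sV$ is $\tR(\sG)$-invariant, hence $0$ or $\sV$ by irreducibility; in the first case you show $N$ is a graph of a cocycle $\phi$ over a normal subgroup $\sG_N$ of $\sG$ contained in $\ker\tR$, which is killed by perfectness. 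Your derivation is clean and self-contained, and in fact slightly more general because it does not presuppose faithfulness of $\tR$ (the paper states faithfulness only later in Subsection~2.3 and implicitly relies on it via the citation). The second place where you add detail is the isomorphism step: the paper disposes of it with the compressed remark that ``any smooth isomorphism between two connected Lie groups is linear and any invertible linear transformation does not change \dots,'' which is exactly the point you flag as requiring care, since $\mathsf{Aut}(\sG\ltimes_\tR\sV)$ has a positive-dimensional identity component (the Schur scalings) even though the group is centerless. Your resolution — decompose $\mathsf{Isom}(Q_1,Q_2)$ into finitely many components, lift the connected automorphisms, and realize them as conjugations in the affine-linear model $\operatorname{diag}(d,1)$ so they preserve $\irr$, $\mdeg$ and $d(\cdot)$ — is a correct and explicit way of making the paper's ``linear'' remark rigorous. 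In short: same skeleton, but you supply the normal-subgroup classification that the paper outsources and you close the degree-invariance gap that the paper leaves implicit.
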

\begin{proof}
We will prove this result in two parts.
 
$\diamond$ Let $\pi_i: (\sG\ltimes_\tR\sV) \times (\sG\ltimes_\tR\sV) \to (\sG\ltimes_\tR\sV)$ denote the projection onto the $i^{th}$-factor for $i=1,2$. We observe that for any $\gamma\in\Gamma$ the group $(\rho_1,\rho_2)(\Gamma)$ is normalized by $(\rho_1(\gamma),\rho_2(\gamma))$. Hence $\sX$ is normalized by $(\rho_1(\gamma),\rho_2(\gamma))$ for all $\gamma\in\Gamma$. It follows that for all $i\in\{1,2\}$, $\pi_i(\sX)$ is a normal subgroup of $\sG\ltimes_\tR\sV$. Also, as $\pi_i(\sX)\supset\rho_i(\Gamma)$ with $\rho_i(\Gamma)$ Zariski dense in $\sG\ltimes_\tR\sV$ we obtain that $\pi_1(\sX)=(\sG\ltimes_\tR\sV)=\pi_2(\sX)$. We denote the kernels of $\pi_1,\pi_2$ by $\sN_1,\sN_2$ respectively and observe that $\sN_1=\sX\cap(\{(e,0)\}\times(\sG\ltimes_\tR\sV))$, $\sN_2=\sX\cap((\sG\ltimes_\tR\sV)\times\{(e,0)\})$ and
\begin{align*}
    (e,0,h,Y)\sN_1(e,0,h,Y)^{-1}&=(g,X,h,Y)\sN_1(g,X,h,Y)^{-1}=\sN_1,\\
    (g,X,e,0)\sN_2(g,X,e,0)^{-1}&=(g,X,h,Y)\sN_2(g,X,h,Y)^{-1}=\sN_2,
\end{align*}
for all $(g,X,h,Y)\in\sX$. It follows that $\sN_1$ is normal inside $\{(e,0)\}\times(\sG\ltimes_\tR\sV)$ and $\sN_2$ is normal inside $(\sG\ltimes_\tR\sV)\times\{(e,0)\}$. Hence by Goursat's Lemma \cite{Gour} we obtain that the image of $\sX$ inside $((\sG\ltimes_\tR\sV)\times(\sG\ltimes_\tR\sV))/(\sN_1\sN_2)$ is the graph of an isomorphism $\sigma: (\sG\ltimes_\tR\sV)/\sN_1\to(\sG\ltimes_\tR\sV)/\sN_2$. Moreover, as $\pi_1$ and $\pi_2$ are smooth and $\sX$ is an algebraic group, we deduce that $\sigma$ is smooth.

$\diamond$ We use Proposition A.2 of \cite{Ghosh5} to obtain that $\sN_1$, $\sN_2$ are trivial or $\sN_1=\sG_1\ltimes_\tR\sV$ and $\sN_2=\sG_2\ltimes_\tR\sV$ for some normal subgroups $\sG_1,\sG_2$ of $\sG$. Hence there are finitely many such normal subgroups (see Theorem 4.3 of \cite{Milne}). Moreover, we note that any smooth isomorphism between two connected Lie groups is linear and any invertible linear transformation does not change the number of irreducible components, the degrees and the dimensions of an algebraic variety. Hence $\{M_\sY\mid\sY\in\cS\}$ is a finite set and we conclude that $\max\{M_\sY\mid\sY\in\cS\}$ exists and is finite.
\end{proof}

%%%%%%%%%%%%%%%%%%%%%%%%%%%%%%%%%%%%%%%%%%%%%%%%%%%%%%%%%%%%%%%%%%%%%%%%%%%%%%%%%%%%%%%%%
\section{Finite step rigidity}\label{sec.fsr}
%%%%%%%%%%%%%%%%%%%%%%%%%%%%%%%%%%%%%%%%%%%%%%%%%%%%%%%%%%%%%%%%%%%%%%%%%%%%%%%%%%%%%%%%%

In this section, we prove finite step rigidity results for the Cartan spectra, Jordan spectra of representations in semisimple Lie groups and finite step rigidity results for the Margulis-Smilga spectra of representations in affine Lie groups satisfying certain criteria.

%%%%%%%%%%%%%%%%%%%%%%%%%%%%%%%%%%%%%%%%%%%%%%%%%%%%%%%%%%%%%%%%%%%%%%%%%%%%%%%%%%%%%%%%%
\subsection{Cartan spectrum} \label{subsec.carpro}
%%%%%%%%%%%%%%%%%%%%%%%%%%%%%%%%%%%%%%%%%%%%%%%%%%%%%%%%%%%%%%%%%%%%%%%%%%%%%%%%%%%%%%%%%

In this subsection we will prove finite step rigidity of the Cartan projection spectrum of representations.

Let $\sG$ be a real semisimple linear Lie group of noncompact type and with trivial center and let $\fg$ be its Lie algebra. We denote the conjugation map on $\sG$ by $\tC_g$ and let $\tAd_g$ be the differential at identity of the conjugation map by $g$ i.e. for $e\in \sG$, the identity element, and for any $g,h\in \sG$ we have $\tC_g(h):=ghg^{-1}$ and $\tAd_g:=d_e\tC_g$. We note that the map $\tAd: \sG \to \sSL(\fg)$ is a homomorphism. Moreover, let $\tad$ be the differential of $\tAd$ at the identity element i.e. $\tad=d_e\tAd$ . We fix a Cartan involution $\theta:\fg\to\fg$ and consider the corresponding decomposition $\fg=\fk\oplus\fp$ where $\fp$ (respectively $\fk$) is the eigenspace of eigenvalue -1 (respectively 1). We denote the maximal abelian subspace of $\fp$ by $\fa$ and let $\fa^*$ be the space of linear forms on $\fa$. We define
\[\fg_\alpha:=\{X\in\fg\mid \tad_H(X)=\alpha(H)X\text{ for all } H\in\fa\}\]
for all $\alpha\in\fa^*$ and call $\alpha\in\fa^*$ a \textit{restricted root} if and only if both $\alpha\neq0$ and $\fg_\alpha\neq0$. We denote the set of all restricted roots by $\Sigma\subset\fa^*$. As $\fg$ is finite dimensional, we obtain that $\Sigma$ is finite. We choose a connected component of $\fa\setminus\cup_{\alpha\in\Sigma}\ker(\alpha)$ and denote it by $\fa^{++}$. Also let $\fa^+$ denote the closure of $\fa^{++}$. Let $\sK\subset \sG$ (respectively $\sA\subset \sG$) be the connected subgroup whose Lie algebra is $\fk$ (respectively $\fa$) and let $\sA^+:=\exp{(\fa^+)}$. We note that $\sK$ is a maximal compact subgroup of $\sG$ (see Theorem 6.31 \cite{Knapp}).
\begin{theorem}
Let $\sG$ be a real semisimple Lie group of noncompact type. Then for any $g\in \sG$ there exists $k_1,k_2\in \sK$, not necessarily unique, and a unique $\kappa(g)\in \fa^+$ such that $g=k_1\exp{(\kappa(g))}k_2$ i.e.
$\sG=\sK\sA^+\sK$.
\end{theorem}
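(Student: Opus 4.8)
The plan is to prove the $KA^+K$ (Cartan) decomposition $\sG=\sK\sA^+\sK$ in two stages: first establish the weaker statement $\sG=\sK\sA\sK$ using the maximality of $\fa$ among abelian subspaces of $\fp$ together with the compactness of $\sK$, and then improve $\sA$ to $\sA^+$ and prove uniqueness of $\kappa(g)$ using the action of the restricted Weyl group $W=N_\sK(\fa)/Z_\sK(\fa)$ on $\fa$. First I would recall the global Cartan decomposition $\sG=\sK\exp(\fp)$, i.e. the polar decomposition: every $g\in\sG$ factors uniquely as $g=k\exp(X)$ with $k\in\sK$, $X\in\fp$; this is standard (Knapp, Theorem 6.31) and I would cite it rather than reprove it. So it suffices to understand $\exp(\fp)$.

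Next, the heart of the existence part: for any $X\in\fp$ I want $k\in\sK$ with $\tAd_k X\in\fa^+$. The key is that $\fa$ is a \emph{maximal} abelian subspace of $\fp$ and all such maximal abelian subspaces are $\sK$-conjugate. To see $\tAd_\sK\cdot\fa\supset\fp$: given $X\in\fp$, extend it to a maximal abelian subspace $\fa'$ of $\fp$; then $\fa'=\tAd_k\fa$ for some $k\in\sK$ (conjugacy of maximal abelian subspaces of $\fp$, proved e.g. by a compactness/critical-point argument using the $\sad$-invariant form), whence $\tAd_{k^{-1}}X\in\fa$. This already gives $\sG=\sK\sA\sK$: write $g=k_1\exp(X)$, then $X=\tAd_k H$ for $H\in\fa$ and $k\in\sK$, so $\exp(X)=k\exp(H)k^{-1}$ and $g=(k_1 k)\exp(H)k^{-1}\in\sK\sA\sK$. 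To land in $\fa^+$ rather than just $\fa$, I use that the Weyl group $W$ acts on $\fa$ by elements of $\sK$ (each $w\in W$ is realized by some $n\in N_\sK(\fa)$ with $\tAd_n|_\fa=w$), and that $W$ acts simply transitively on the set of Weyl chambers, so every $W$-orbit in $\fa$ meets the closed chamber $\fa^+$; replacing $H$ by $wH\in\fa^+$ and absorbing $n$ into the $\sK$-factors gives $g\in\sK\sA^+\sK$.

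For uniqueness of $\kappa(g)\in\fa^+$, suppose $k_1\exp(H)k_2=k_1'\exp(H')k_2'$ with $H,H'\in\fa^+$. Then $\exp(2H)=\exp(H)k_2 k_2'^{-1}\exp(-H')\cdot(\ast)$ — more cleanly, apply the Cartan involution / transpose: from $g=k_1\exp(H)k_2$ one gets $\theta(g)^{-1}g$ conjugate (by $k_2$) to $\exp(2H)$, so the conjugacy class of $\exp(2H)$ under $\sK$, hence the $\sad$-semisimple element $2H$ up to $\sK$-conjugacy, is determined by $g$; but two elements of $\fa^+$ that are $\tAd_\sK$-conjugate are already $W$-conjugate (a standard fact: $\tAd_\sK$-conjugacy on $\fa$ restricts to $W$-conjugacy, via $N_\sK(\fa)$), and $W$ acts simply transitively on chambers so it has no nontrivial fixed points beyond the identity on $\overline{\fa^{++}}$ up to the face structure — concretely, distinct points of $\fa^+$ lie in distinct $W$-orbits. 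Hence $H=H'$.

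The main obstacle is the conjugacy of maximal abelian subspaces of $\fp$ together with the realization of $W$ inside $\sK$ and the statement that $\tAd_\sK$-conjugacy on $\fa$ coincides with $W$-conjugacy; these are precisely the ingredients that make both the "$\fa^+$" refinement and the uniqueness work, and they rely on a genuine compactness argument (maximizing $\langle \tAd_k X, H_0\rangle$ over $k\in\sK$ for a regular $H_0$) rather than formal manipulation. Everything else — the global polar decomposition $\sG=\sK\exp(\fp)$ and the combinatorics of Weyl chambers — I would quote from the standard references (Knapp, Helgason). I would therefore structure the proof as: (i) cite polar decomposition; (ii) prove/cite $\tAd_\sK\cdot\fa=\fp$ via conjugacy of maximal abelian subspaces; (iii) deduce $\sG=\sK\sA\sK$; (iv) use $W\hookrightarrow\sK$ and chamber combinatorics to get $\sA^+$; (v) prove uniqueness via $\theta(g)^{-1}g$ and $W$-conjugacy.
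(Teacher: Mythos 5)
The paper itself does not prove this theorem: it is stated without a proof environment and is the classical $\sK\sA^+\sK$ (Cartan) decomposition, implicitly quoted from the standard references (Knapp, Helgason) that the surrounding text already cites. Your sketch is the standard textbook argument — polar decomposition $\sG=\sK\exp(\fp)$, $\sK$-conjugacy of maximal abelian subspaces of $\fp$ to get $\sG=\sK\sA\sK$, the realization of the restricted Weyl group inside $N_\sK(\fa)$ to refine $\sA$ to $\sA^+$, and uniqueness via $\theta(g)^{-1}g$ being $\sK$-conjugate to $\exp(2\kappa(g))$ together with $\fa^+$ being a strict fundamental domain for $W$ on $\fa$ — and it is correct, with the genuinely nontrivial analytic input correctly isolated (conjugacy of maximal flats in $\fp$ via a compactness/extremum argument, and $\tAd_\sK$-conjugacy on $\fa$ coinciding with $W$-conjugacy). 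Since there is no in-paper proof to compare against, the only remark worth making is that you could tighten the uniqueness paragraph by simply saying: $\exp|_\fp$ is a diffeomorphism onto its image, so $\theta(g)^{-1}g$ determines the $\tAd_\sK$-orbit of $2\kappa(g)$ in $\fp$, which meets $\fa^+$ in exactly one point.
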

\begin{definition}
The map $\kappa:\sG\to\fa^+$ is called the Cartan projection.
\end{definition}
We note that the Cartan projection is invariant under conjugation only by elements of $\sK$. Let $\tB$ be the Killing form on $\fg$ i.e. for any $X,Y\in\fg$ we have $\tB(X,Y):=\ttr(\tad_X\circ \tad_Y)$ and let $\tB^*$ be another inner product on $\fg$ defined as follows: for all $X,Y\in\fg$ we have $\tB^*(X,Y):=-\tB(\theta(X),Y)$.
Then $\tB^*$ is positive definite and for $X\in\fk$ (respectively $\fp$) the operator $\tad_X$ is skew-symmetric (respectively symmetric) with respect to $\tB^*$ (for more details please see Lemma 6.44 of \cite{Zil}). Hence, we can choose a basis of $\fg$ with respect to which $\tB^*$ is the standard Euclidean norm and obtain an isomorphism $\iota: \sSL(\fg)\to \sSL(n,\R)$ for $n=\dim\fg$. Then under this isomorphsim $\tAd(\sK)$ is a subgroup of $\sSO(n,\R)$ and $\tAd(\sA)$ is a subgroup of $\sD$, the subgroup of diagonal matrices of $\sSL(n,\R)$. We observe that for an element $g\in \sSL(n,\R)$ its transpose, $g^t$ makes sense and for any $h\in \sSO(n,\R)$
we have $h^th=e$, where $e$ is the identity element of $\sSO(n,\R)$.

Now let $\sG$ be a connected semisimple real algebraic Lie group with trivial center and without compact factors. Since $\sG$ has trivial center, we note that $\iota\circ \tAd$ gives a rational embedding of $\sG$ into $\sSL(n,\R)$ (see Proposition 4.4.5 (ii) of \cite{Sp}). Henceforth, we will treat $\sG$ as an algebraic subgroup of $\sSL(n,\R)$. We consider the complexification, $\sSL(n,\C)$ of $\sSL(n,\R)$ and observe that $\sSL(n,\R)$ is Zariski dense in $\sSL(n,\C)$. Moreover, let $\sH_\C$ denote the Zariski closure of any algebraic subgroup $\sH$ of $\sSL(n,\R)$ inside $\sSL(n,\C)$. We consider the following regular embedding:
\begin{align*}
    \aleph: \sGL(n,\C)\times \sGL(n,\C) &\rightarrow \sGL(2n,\C)\\
    (g,h)&\mapsto   \begin{bmatrix} g & 0\\
                                    0 & h     
                    \end{bmatrix}.
\end{align*}
Hence $\aleph(\sG_\C\times\sG_\C)$ is a Zariski closed subgroup of $\sGL(2n,\C)$. Now for any $g_{11},g_{12},g_{21},g_{22}\in\mathfrak{gl}(n,\C)$ with $[g_{11},g_{12};g_{21},g_{22}]:=\begin{bmatrix} g_{11} & g_{12}\\ g_{21} & g_{22} \end{bmatrix}\in\sGL(2n,\C)$ we define $\cA:\sGL(2n,\C)\to\C$ such that \[\cA([g_{11},g_{12};g_{21},g_{22}]):=\ttr(g_{11}^tg_{11})-\ttr(g_{22}^tg_{22}).\]
We observe that $\cA$ is algebraic and we denote the zero set of $\cA$ inside $\sGL(2n,\C)$ by $\sZ_\cA$. 
\begin{lemma}\label{lem.nonempcar}
Let $\sG$ be a connected semisimple real Lie group with trivial center and without compact factors and let $\sN$ be any nontrivial normal subgroup of $\sG$. Then both $\aleph(\sN\times\{e\})\setminus\sZ_\cA$ and $\aleph(\{e\}\times\sN)\setminus\sZ_\cA$ are nonempty.
\end{lemma}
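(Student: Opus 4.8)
The plan is to reduce both assertions to a single elementary statement about the Hilbert--Schmidt norm on $\sN$, and then verify that statement using the structure of $\sN$. Throughout, regard $\sG$ --- hence $\sN$ --- as a subgroup of $\sSL(n,\R)\subset\sSL(n,\C)$ via $\iota\circ\tAd$, and write $\|X\|^2:=\ttr(X^tX)$. For $g\in\sN$ one has $\aleph(g,e)=\begin{bmatrix} g & 0\\ 0 & e\end{bmatrix}$, so $\cA(\aleph(g,e))=\ttr(g^tg)-\ttr(e^te)=\|g\|^2-n$ and, symmetrically, $\cA(\aleph(e,g))=n-\|g\|^2$. Hence both $\aleph(\sN\times\{e\})\setminus\sZ_\cA$ and $\aleph(\{e\}\times\sN)\setminus\sZ_\cA$ are nonempty the moment we find a single $g_0\in\sN$ with $\|g_0\|^2\neq n$, i.e.\ as soon as $\sN$ fails to lie on the Hilbert--Schmidt sphere $\{g:\|g\|^2=n\}$ through $e$.

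To produce such a $g_0$ I would exhibit an unbounded one-parameter subgroup inside $\sN$. Since $\sG$ is connected semisimple with trivial center, it is the direct product of its simple (adjoint) factors, none of which is compact; a nontrivial normal subgroup $\sN$ is then the product of a nonempty subfamily of these factors. In particular $\sN$ is a connected closed subgroup without compact factors, its Lie algebra $\fn$ is a $\theta$-stable ideal of $\fg$, and $\fn\cap\fp\neq 0$. Choose $0\neq H\in\fn\cap\fp$. Because $\fg$ has trivial center, $\tad_H\neq 0$, and since $\tad_H$ is symmetric with respect to $\tB^*$, in the chosen $\tB^*$-orthonormal basis it is a nonzero real symmetric matrix and therefore has a nonzero real eigenvalue; replacing $H$ by $-H$ if necessary, we may take this eigenvalue to be some $\lambda>0$.

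Finally, for $s\in\R$ put $g_s:=\iota(\tAd(\exp(sH)))=\exp\!\big(s\,\iota(\tad_H)\big)\in\sN$. As the exponential of a real symmetric matrix, $g_s$ is symmetric positive definite with $e^{s\lambda}$ among its eigenvalues, so $\|g_s\|^2=\ttr(g_s^2)$ is the sum of the squares of its eigenvalues and satisfies $\|g_s\|^2\geq e^{2s\lambda}\to\infty$ as $s\to\infty$. Picking $s_0$ with $e^{2s_0\lambda}>n$ and setting $g_0:=g_{s_0}$ gives $\cA(\aleph(g_0,e))=\|g_0\|^2-n>0$ and $\cA(\aleph(e,g_0))=n-\|g_0\|^2<0$, so $\aleph(g_0,e)\in\aleph(\sN\times\{e\})\setminus\sZ_\cA$ and $\aleph(e,g_0)\in\aleph(\{e\}\times\sN)\setminus\sZ_\cA$, as desired.

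The only step that is more than a routine computation is the structure-theoretic input used above --- that a nontrivial normal subgroup of a semisimple group without compact factors again has no compact factors, equivalently that $\fn\cap\fp\neq 0$, equivalently that $\sN$ contains a nonconstant ``geodesic'' one-parameter subgroup and is therefore unbounded in $\sSL(n,\R)$. This is standard (normal subgroups of an adjoint semisimple group are subproducts of its simple factors, and a Cartan involution restricts to each factor), and once it is in hand the conclusion follows immediately from the block-diagonal form of $\cA$ and the unboundedness of $\exp$ along a ray in a Cartan subspace.
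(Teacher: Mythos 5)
Your proof is correct and reaches the same conclusion as the paper's, but the route is genuinely different in one step. The paper first characterizes the intersection $\sZ_\cA\cap\aleph(\sG\times\{e\})$ exactly: using the arithmetic mean--geometric mean inequality applied to the eigenvalues of $g^tg$ (which multiply to $1$ since $\det g=1$), it shows $\ttr(g^tg)\geq n$ with equality precisely when $g\in\sK$, so that $\sZ_\cA\cap\aleph(\sG\times\{e\})=\aleph(\sK\times\{e\})$; then $\aleph(\sN\times\{e\})\subset\sZ_\cA$ would force $\sN\subset\sK$, contradicting the absence of compact factors. You skip the exact characterization and instead directly exhibit an element of $\sN$ off the level set $\{\ttr(g^tg)=n\}$ by choosing $0\neq H\in\fn\cap\fp$ and letting $\exp(sH)$ run to infinity along a one-parameter subgroup whose Hilbert--Schmidt norm blows up. Both arguments need the same structural input --- that a nontrivial normal subgroup of an adjoint semisimple group is a subproduct of simple factors, hence noncompact and with $\fn\cap\fp\neq 0$ --- so neither is strictly simpler, but yours is more hands-on and avoids AM--GM entirely, while the paper's gives the slightly stronger statement that the complement of $\sZ_\cA$ in $\aleph(\sN\times\{e\})$ is exactly $\aleph((\sN\setminus\sK)\times\{e\})$. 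Your justification of $\theta$-stability of $\fn$ and of the symmetry and nonvanishing of $\tad_H$ is accurate, and the passage from a nonzero real eigenvalue of $\tad_H$ to unboundedness of $\ttr(g_s^2)$ is sound.
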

\begin{proof}
We use the arithmetic mean geometric mean inequality and observe that $\sZ_\cA\cap\aleph(\sG\times\{e\})=\aleph(\sK\times\{e\})$ and $\sZ_\cA\cap\aleph(\{e\}\times\sG)=\aleph(\{e\}\times\sK)$. Hence for any nontrivial normal subgroup $\sN$ of $\sG$ we have $\aleph(\sN\times\{e\})\cap\sZ_\cA=\aleph((\sN\cap\sK)\times\{e\})$ and similarly $\aleph(\{e\}\times\sN)\cap\sZ_\cA=\aleph(\{e\}\times(\sN\cap\sK))$.  

If possible, let us assume that $\aleph(\sN\times\{e\})\setminus\sZ_\cA=\emptyset$. It follows that $\aleph(\sN\times\{e\})\subset\sZ_\cA$ and we obtain that $\sN=\sN\cap\sK$. As $\sG$ have no compact factors and $\sN$ is nontrivial, we obtain a contradiction. Similarly, if we assume that $\aleph(\{e\}\times\sN)\setminus\sZ_\cA=\emptyset$, then we obtain a contradiction too. Therefore, we conclude that both $\aleph(\sN\times\{e\})\setminus\sZ_\cA\neq\emptyset$ and $\aleph(\{e\}\times\sN)\setminus\sZ_\cA\neq\emptyset$.
\end{proof}

\begin{lemma}\label{lem.eqcar}
Let $\sG$ be a connected semisimple real algebraic Lie group with trivial center and without compact factors. Then for $g,h\in\sG$ we have $\aleph(g,h)\in\sZ_\cA$, whenever $\kappa(g)=\kappa(h)$.
\end{lemma}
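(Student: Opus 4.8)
The plan is to unwind the definitions so that the statement becomes an elementary computation with traces. Recall that we have fixed an isomorphism $\iota:\sSL(\fg)\to\sSL(n,\R)$ for which $\tAd(\sK)\subset\sSO(n,\R)$ and $\tAd(\sA)\subset\sD$, and that we regard $\sG$ as a subgroup of $\sSL(n,\R)$ via $\iota\circ\tAd$. The key observation is that under this identification the Cartan decomposition $\sG=\sK\sA^+\sK$ says precisely that every $g\in\sG$ can be written as $g=O_1\,D_g\,O_2$ with $O_1,O_2\in\sSO(n,\R)$ and $D_g:=\iota(\tAd(\exp\kappa(g)))=\iota(\exp\tad_{\kappa(g)})\in\sD$, a diagonal matrix whose (positive) entries depend only on $\kappa(g)$, since $\exp$ restricted to $\fa^+$ is injective and $\iota\circ\tAd$ is an embedding.

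First I would fix $g\in\sG$, write $g=k_1\exp(\kappa(g))k_2$ in $\sG$ with $k_1,k_2\in\sK$, and apply $\iota\circ\tAd$ to obtain $g=O_1\,D_g\,O_2$ as above, where $O_i=\iota(\tAd(k_i))\in\sSO(n,\R)$. Being diagonal, $D_g$ is symmetric, so $D_g^t=D_g$. Next I would compute $g^t g$: since $O_1^tO_1=e$ (as $O_1\in\sSO(n,\R)$), we get
\[
g^t g = O_2^t\,D_g^t\,O_1^t\,O_1\,D_g\,O_2 = O_2^t\,D_g^2\,O_2,
\]
and by cyclicity of the trace $\ttr(g^t g)=\ttr(O_2^t D_g^2 O_2)=\ttr(D_g^2)$, which is the sum of the squares of the diagonal entries of $D_g$ and hence depends only on $\kappa(g)$.

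Finally, if $\kappa(g)=\kappa(h)$, then $D_g=D_h$ by the injectivity remark above, so $\ttr(g^t g)=\ttr(D_g^2)=\ttr(D_h^2)=\ttr(h^t h)$. Since $\aleph(g,h)$ is the block-diagonal matrix with blocks $g$ and $h$, the definition of $\cA$ gives $\cA(\aleph(g,h))=\ttr(g^t g)-\ttr(h^t h)=0$, i.e.\ $\aleph(g,h)\in\sZ_\cA$, as claimed.

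I do not expect a serious obstacle here; the only point requiring a little care is justifying that the decomposition $g=O_1\,D_g\,O_2$ is genuinely the image of the Cartan decomposition under the \emph{fixed} isomorphism $\iota$, so that $D_g$ is literally a function of $\kappa(g)$ rather than being determined only up to the $\sK$-ambiguity in $k_1,k_2$. This follows from $\tAd(\sA^+)=\exp(\tad_{\fa^+})$ together with injectivity of $\exp$ on $\fa^+$ and of $\iota\circ\tAd$ on $\sG$ (the latter using that $\sG$ has trivial center).
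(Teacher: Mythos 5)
Your argument is correct and is essentially the same as the paper's: both write $g=k_1\exp(\kappa(g))k_2$ under the fixed embedding $\iota\circ\tAd$, use $k^tk=e$ for $k\in\sK$ together with cyclicity of trace to conclude $\ttr(g^tg)=\ttr(\exp(\kappa(g))^t\exp(\kappa(g)))$, and deduce that $\cA(\aleph(g,h))$ vanishes when $\kappa(g)=\kappa(h)$. The extra care you take in noting that $D_g$ (equivalently $\exp\kappa(g)$) is a genuine function of $\kappa(g)$ independent of the choice of $k_1,k_2$ is correct and matches the uniqueness built into the Cartan projection.
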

\begin{proof}
Let $g,h\in \sG$, let $g=k_g\exp{(\kappa(g))}k_g^\prime$ and $h=k_h\exp{(\kappa(h))}k_h^\prime$ for some $k_g,k_h,k_g^\prime,k_h^\prime\in \sK$. As $k^tk=e$ for any $k\in\sK$, we have $\ttr(k^tgk)=\ttr(g)$ for any $g\in\sG$. Hence we conclude by observing that for any $g,h\in\sG$ with $\kappa(g)=\kappa(h)$ we have
\begin{align*}
    \cA(\aleph(g,h))&=\ttr(g^tg)-\ttr(h^th)\\
    &=\ttr(\exp{(\kappa(g))}^t\exp{(\kappa(g))})-\ttr(\exp{(\kappa(h))}^t\exp{(\kappa(h))})=0.
\end{align*}
\end{proof}

\begin{theorem}\label{thm1}
Let $\sG$ be a connected semisimple real algebraic Lie group with trivial center and without compact factors. Let $\Gamma$ be a finitely generated group with a finite symmetric generating set $\sS$ and let $\cS$ be as in Proposition \ref{prop.linfinite}. Then, for $m=\max\{M_{\aleph(\sY_\C)\cap\sZ_\cA}\mid\sY\in\cS\}$ and any two representations $\rho_1,\rho_2: \Gamma \to \sG$, which satisfy the following:
\begin{enumerate}
    \item both $\rho_1(\Gamma)$ and $\rho_2(\Gamma)$ are Zariski dense in $\sG$,
    \item $\kappa(\rho_1(\gamma))=\kappa(\rho_2(\gamma))$ for all $\gamma \in \sB_\sS(m)$,
\end{enumerate}
are equivalent; in other words, there exists an automorphism $\sigma:\sG \to \sG$ such that we have $\sigma \circ \rho_1=\rho_2$. 
\end{theorem}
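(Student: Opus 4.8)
The plan is to play the two representations off against each other inside the product group $\sG\times\sG$ and use the escape lemma (Proposition \ref{prop.main}) to convert the hypothesis ``$\kappa(\rho_1(\gamma))=\kappa(\rho_2(\gamma))$ on a large ball'' into ``$(\rho_1,\rho_2)(\Gamma)$ is contained in a small algebraic subvariety cut out by $\cA$''. Concretely, I would first form the Zariski closure $\sX$ of $(\rho_1,\rho_2)(\Gamma)$ inside $\sG\times\sG$. By Proposition \ref{prop.linfinite}(1), $\sX$ belongs to the family $\cS$, so $\sX=\{(g,h)\mid h\sN_2=\sigma(g\sN_1)\}$ for normal subgroups $\sN_1,\sN_2\trianglelefteq\sG$ and a smooth isomorphism $\sigma:\sG/\sN_1\to\sG/\sN_2$. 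The goal is then to show $\sN_1=\sN_2=\{e\}$, for in that case $\sX$ is the graph of an automorphism $\sigma:\sG\to\sG$, and since $(\rho_1(\gamma),\rho_2(\gamma))\in\sX$ for all $\gamma$ we get $\sigma\circ\rho_1=\rho_2$, which is exactly the conclusion.

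Next I would push everything to the complexification: $\aleph(\sX_\C)$ is a Zariski-closed subgroup of $\sGL(2n,\C)$, and $\aleph(\sX_\C)\cap\sZ_\cA$ is a Zariski-closed subvariety of it. By Lemma \ref{lem.eqcar}, hypothesis (2) gives $\aleph(\rho_1(\gamma),\rho_2(\gamma))\in\sZ_\cA$ for every $\gamma\in\sB_\sS(m)$; since $\aleph(\rho_i(\Gamma))$ generates (a Zariski dense subgroup of) $\aleph(\sX_\C)$ and $\sS$ is symmetric, the ball $\aleph(\sB_\sS(m))$ maps into $\aleph(\sX_\C)\cap\sZ_\cA$. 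The crucial dichotomy is: either $\aleph(\sX_\C)\cap\sZ_\cA$ is all of $\aleph(\sX_\C)$, or it is a \emph{proper} subvariety, in which case Proposition \ref{prop.main} applies with the bound $M_{\aleph(\sX_\C)\cap\sZ_\cA}\le m$ and produces an element of $\sB_\sS(m)$ escaping $\sZ_\cA$ — contradicting hypothesis (2). Hence $\aleph(\sX_\C)\subseteq\sZ_\cA$, i.e. $\cA$ vanishes identically on $\sX$ (and on $\sX_\C$). I should double-check here that $m$, defined as $\max\{M_{\aleph(\sY_\C)\cap\sZ_\cA}\mid\sY\in\cS\}$, is a legitimate finite bound: finiteness of the index set $\cS$ and invariance of the invariants $\irr,\mdeg,d$ under the linear embeddings $\aleph,\iota\circ\tAd$ is exactly Proposition \ref{prop.linfinite}(2), so this is fine.

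Finally I would rule out $\sN_1,\sN_2$ nontrivial. If say $\sN_1\neq\{e\}$, then $\sN_1\times\{e\}\subset\sN_1\sN_2\cap(\sG\times\{e\})$ and, tracking through the Goursat description, $\aleph(\sN_1\times\{e\})\subseteq\aleph(\sX_\C)\subseteq\sZ_\cA$. But Lemma \ref{lem.nonempcar} says $\aleph(\sN_1\times\{e\})\setminus\sZ_\cA\neq\emptyset$ for any nontrivial normal subgroup, a contradiction; symmetrically for $\sN_2$. (One must also note $\sN_1$ and $\sN_2$ are genuinely normal \emph{in $\sG$} and not just in the respective factors, which is the content of the conjugation computation in Proposition \ref{prop.linfinite}.) Therefore $\sN_1=\sN_2=\{e\}$ and we are done. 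I expect the main obstacle to be the bookkeeping of the bound $m$: one has to be careful that the same integer $m$ works uniformly for \emph{all} admissible pairs $(\rho_1,\rho_2)$ — this is why the theorem takes the maximum over the finite family $\cS$ rather than a constant attached to a single $\sX$ — and that the passage $\sG\hookrightarrow\sSL(n,\R)\hookrightarrow\sSL(n,\C)$ and the block embedding $\aleph$ do not disturb Zariski density or the escape estimate. The dynamical input (Anosov, hyperconvexity) plays no role at this level of generality; it enters only when specializing to Hitchin representations, where $\lambda$ replaces $\kappa$.
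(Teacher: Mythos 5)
Your proposal follows essentially the same route as the paper's proof: apply Proposition \ref{prop.linfinite} to get the Goursat description of the Zariski closure $\sX$, use Lemma \ref{lem.eqcar} plus Proposition \ref{prop.main} to force $\aleph(\sX_\C)\subset\sZ_\cA$, and then invoke Lemma \ref{lem.nonempcar} to kill $\sN_1,\sN_2$. The only quibble is a small notational slip in the last paragraph — in the paper's setup $\sN_1=\sX\cap(\{e\}\times\sG)$, so you want $\aleph(\{e\}\times\sN_1)$ rather than $\aleph(\sN_1\times\{e\})$ — but Lemma \ref{lem.nonempcar} covers both factors, so the argument goes through unchanged.
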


\begin{proof}
We use Proposition \ref{prop.linfinite} and observe that the Zariski closure of the group $(\rho_1,\rho_2)(\Gamma)$ inside $\sG\times\sG$ is
\[\sX:=\{(g,h)\mid h\sN_2=\sigma(g\sN_1)\}\]
for some normal subgroups $\sN_1,\sN_2$ of $\sG$ and isomorphism $\sigma:\sG/\sN_1\to\sG/\sN_2$. Let $\sX_\C$ be the Zariski closure of $\sX$ inside $\sGL(2n,\C)$. As $\aleph$ is a regular embedding, we observe that $\sZ_\cA\cap\aleph(\sX_\C)$ is a subvariety of $\sGL(2n,\C)$. Moreover, we have $M_{\aleph(\sX_\C)\cap\sZ_\cA}\leq m$. 

We use Lemma \ref{lem.eqcar} and observe that $\aleph(\rho_1(\gamma),\rho_2(\gamma))\in\sZ_\cA\cap\aleph(\sX_\C)$ for all $\gamma\in\sB_\sS(m)$. Hence $\aleph(\rho_1(\gamma),\rho_2(\gamma))\in\sZ_\cA\cap\aleph(\sX_\C)$ for all $\gamma\in\sB_\sS(M_{\aleph(\sX_\C)\cap\sZ_\cA})$. Therefore, by Proposition \ref{prop.main} we obtain that $\sZ_\cA\cap\aleph(\sX_\C)$ is not a proper subvariety of $\aleph(\sX_\C)$. It follows that $\aleph(\sX_\C)\subset\sZ_\cA$. Now we use Lemma \ref{lem.nonempcar} and deduce that $\sN_1$ and $\sN_2$ are trivial. Finally, we use Goursat's lemma \cite{Gour} to conclude the proof.
\end{proof}

%%%%%%%%%%%%%%%%%%%%%%%%%%%%%%%%%%%%%%%%%%%%%%%%%%%%%%%%%%%%%%%%%%%%%%%%%%%%%%%%%%%%%%%%%
\subsection{Jordan spectrum}
%%%%%%%%%%%%%%%%%%%%%%%%%%%%%%%%%%%%%%%%%%%%%%%%%%%%%%%%%%%%%%%%%%%%%%%%%%%%%%%%%%%%%%%%%

In this subsection we will prove finite step rigidity of Jordan spectrum. These results are partial generalizations of results of Dal'bo--Kim \cite{DK}. 

Let $\sG$ be a real semisimple Lie group of noncompact type and with trivial center. Also, let $\fg,\theta,\fk,\fp,\fa,\fa^{++},\fa^+,\fa^*$ and $\Sigma$ be as in Subsection \ref{subsec.carpro}. We define $\Sigma^+\subset \Sigma$ to be the set of restricted roots which take positive values on $\fa^+$ and consider the following nilpotent subalgebra:
\[\fn:=\bigoplus_{\alpha\in\Sigma^+}\fg_\alpha .\]
We denote the Lie subgroups of $\sG$ generated by $\fk,\fa,\fn$ as $\sK,\sA,\sN$ respectively and for $g\in \sG$ we say
\begin{enumerate}
    \item $g$ is \textit{elliptic} if and only if $g$ is conjugate to some element of $\sK$,
    \item $g$ is \textit{hyperbolic} if and only if $g$ is conjugate to some element of $\sA$,
    \item $g$ is \textit{unipotent} if and only if $g$ is conjugate to some element of $\sN$.
\end{enumerate}
\begin{theorem}[Jordan decomposition]
Let $\sG$ be a connected real semi-simple algebraic Lie group of noncompact type and let $g\in \sG$. Then there exist unique elliptic, hyperbolic and unipotent elements respectively $g_\te,g_\tth,g_\tu\in \sG$ such that the elements $g_\te,g_\tth,g_\tu$ commute with each other and
\[g=g_\te g_\tth g_\tu.\]
\end{theorem}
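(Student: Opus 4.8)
The plan is to deduce this from the classical multiplicative Jordan decomposition of matrices. Fix a faithful rational representation of $\sG$ in some $\sGL(N,\R)$ (for instance $\iota\circ\tAd$ when $\sG$ has trivial center), and regard $\sG\subset\sGL(N,\R)\subset\sGL(N,\C)$ with Zariski closure $\sG_\C$. For $g\in\sG$ I would first take the Jordan--Chevalley decomposition $g=\sigma_s\sigma_u$ inside $\sGL(N,\C)$, with $\sigma_s$ semisimple, $\sigma_u$ unipotent, and $\sigma_s\sigma_u=\sigma_u\sigma_s$. Since both factors are polynomials in $g$ they lie in the Zariski closure of $\langle g\rangle$, which is contained in $\sG_\C$; since $g$ is a real matrix and the decomposition is unique, complex conjugation fixes $\sigma_s$ and $\sigma_u$, so they are real. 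Writing $\sigma_u=\exp N$ with $N=\log\sigma_u$ a real nilpotent matrix lying in $\fg_\C\cap\mathfrak{gl}(N,\R)=\fg$, we get $\sigma_u\in\sG$, hence $\sigma_s=g\sigma_u^{-1}\in\sG$.

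Next I would split the semisimple part. Let $\sigma_s=\sum_i\lambda_i P_i$ be its spectral decomposition over $\C$, the $P_i$ being the spectral projections (polynomials in $\sigma_s$), and set $g_\tth:=\sum_i|\lambda_i|P_i$ and $g_\te:=\sum_i(\lambda_i/|\lambda_i|)P_i$, so that $g_\tth$ and $g_\te$ commute with each other and with $\sigma_s,\sigma_u$, and $g_\te g_\tth=\sigma_s$. Complex conjugation permutes the pairs $(\lambda_i,P_i)$ compatibly and fixes both $|\lambda_i|$ and $\lambda_i/|\lambda_i|$ along this permutation, so $g_\tth$ and $g_\te$ are real matrices. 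To see that they lie in $\sG$, diagonalise $\sigma_s$ over $\C$; its Zariski closure is a diagonalisable group, cut out in the diagonal torus by monomial equations $\prod_j x_j^{a_j}=1$ ranging over the lattice of multiplicative relations of the eigenvalues $\lambda_j$, and from $\prod_j\lambda_j^{a_j}=1$ one obtains, by conjugating, $\prod_j\bar\lambda_j^{a_j}=1$, hence $\prod_j|\lambda_j|^{a_j}=1$ and $\prod_j(\lambda_j/|\lambda_j|)^{a_j}=1$. Thus $g_\tth$ and $g_\te$ satisfy the same monomial equations, so they lie in the Zariski closure of $\langle\sigma_s\rangle$, which is contained in $\sG_\C$; being real, and with $g_\tth=\exp Y$ for $Y=\log g_\tth\in\fg_\C\cap\mathfrak{gl}(N,\R)=\fg$, we conclude $g_\tth\in\sG$ and then $g_\te=\sigma_s g_\tth^{-1}\in\sG$. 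This membership of the elliptic and hyperbolic pieces in $\sG$, not merely in $\sGL(N,\C)$, is the step I expect to demand the most care: it is exactly where the refinement past Jordan--Chevalley and the real-algebraic structure of $\sG$ enter, and where one must watch out for the possibly disconnected group of real points.

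Finally I would match these three factors with the trichotomy of the statement and establish uniqueness. The matrix $g_\te$ has all eigenvalues of modulus one, so $\langle g_\te\rangle$ is relatively compact; its closure is a compact subgroup of the closed group $\sG$, hence lies in a maximal compact subgroup, which is $\sG$-conjugate to $\sK$, so $g_\te$ is elliptic. The real semisimple matrix $Y=\log g_\tth\in\fg$ has real spectrum, so $\tad Y$ has real eigenvalues and $Y$ is $\sG$-conjugate into $\fa$; hence $g_\tth=\exp Y$ is hyperbolic. And $g_\tu=\exp N$ with $N\in\fg$ nilpotent; since every nilpotent element of a real semisimple Lie algebra is $\sG$-conjugate into $\fn=\bigoplus_{\alpha\in\Sigma^+}\fg_\alpha$ (e.g. by Jacobson--Morozov together with conjugating the neutral element of an $\mathfrak{sl}_2$-triple into $\fa^+$), $g_\tu$ is unipotent; this gives the existence $g=g_\te g_\tth g_\tu$. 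For uniqueness, suppose $g=g_\te g_\tth g_\tu$ with the three factors pairwise commuting; then $g_\te g_\tth$ is a commuting product of diagonalisable elements, hence diagonalisable, and it commutes with the unipotent $g_\tu$, so $(g_\te g_\tth,g_\tu)$ is the Jordan--Chevalley decomposition of $g$ and the pair $g_\te g_\tth,g_\tu$ is determined by $g$. Comparing with a second such triple, the quotient of the two hyperbolic parts equals the quotient of the two elliptic parts; this element is diagonalisable and has spectrum simultaneously in the positive reals and on the unit circle, hence is the identity, so $g_\te$ and $g_\tth$ are determined as well.
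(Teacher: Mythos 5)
The paper does not supply a proof of this theorem; it is stated as a standard background fact (the real multiplicative Jordan decomposition, essentially due to Kostant and found in references such as Helgason or Eberlein), so there is no argument of the author's to compare against. Your reconstruction follows the standard route: embed $\sG$ faithfully and rationally into some $\sGL(N,\R)$, take the Jordan--Chevalley factorisation $g=\sigma_s\sigma_u$ inside $\sG_\C$, descend it to $\R$ by uniqueness under complex conjugation, and then split the semisimple part into modulus and phase factors $g_\tth=\sum_i|\lambda_i|P_i$ and $g_\te=\sum_i(\lambda_i/|\lambda_i|)P_i$, showing they satisfy the same monomial relations as $\sigma_s$ and hence lie in the Zariski closure of $\langle\sigma_s\rangle\subset\sG_\C$. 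That is the right argument, and the trichotomy (elliptic, hyperbolic, unipotent) and the uniqueness reduction to Jordan--Chevalley are handled correctly.

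Two spots are stated more briefly than they deserve, though neither is a real obstruction. First, you assert $\log\sigma_u\in\fg_\C$ and $\log g_\tth\in\fg_\C$ without saying why; membership of $\sigma_u$ or $g_\tth$ in the algebraic group $\sG_\C$ does not by itself put the matrix logarithm in the Lie algebra. The clean justification is that the whole one-parameter group lands in $\sG_\C$: for $\sigma_u$ the map $t\mapsto\sigma_u^t$ is polynomial in $t$, and for $g_\tth$ any defining polynomial of $\sG_\C$ composed with $t\mapsto\sum_i|\lambda_i|^tP_i$ is an exponential sum $\sum_\alpha c_\alpha e^{\beta_\alpha t}$ with distinct \emph{real} $\beta_\alpha$ vanishing at every integer, hence identically; differentiating at $t=0$ then gives a tangent vector. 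Second, in the uniqueness paragraph you take the quotient of the two hyperbolic parts against the quotient of the two elliptic parts, which presupposes that $g_\te,g_\tth,g_\te',g_\tth'$ all commute. This is true but needs a line: each of the four commutes with $g$, hence with $\sigma_s$ (a polynomial in $g$), hence preserves each $\sigma_s$-eigenspace $V_i$, and on $V_i$ the constraints (modulus one for the elliptic factor, positive real for the hyperbolic one, product $\lambda_i\cdot I$) force each factor to be the scalar $\lambda_i/|\lambda_i|$ resp.\ $|\lambda_i|$; this gives commutativity and uniqueness simultaneously. With those two remarks added, the proof is complete and is the standard one.
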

\begin{definition}\label{def.jordan}
Let $\sG$ be a connected semisimple real Lie group of noncompact type. Then the \textit{Jordan-Lyapunov projection} of $g\in \sG$, denoted by $\lambda_g$, is the unique element in $\fa^+$ such that $\exp{(\lambda_g)}$ is a conjugate of $g_\tth$.
\end{definition}
We observe that the Jordan projection are invariant under conjugation, i.e. for all $g,h\in \sG$ we have $\lambda_{hgh^{-1}}=\lambda_g$.
\begin{definition}
Let $\sG$ be a connected semisimple real Lie group of noncompact type. Then $g\in \sG$ is called \textit{loxodromic} if and only if $\lambda_g\in\fa^{++}$.
\end{definition}
Moreover, let $\sM$ be the centralizer of $\fa$ inside $\sK$ and let $\fm$ be the Lie algebra of $\sM$. We note that $\fg_0=\fm\oplus\fa$. 

Now let $\sG$ be a connected semisimple split real algebraic Lie group with trivial center and without compact factors. Since $\sG$ has trivial center, we observe that $\iota\circ \tAd$ gives a rational embedding of $\sG$ into $\sSL(n,\R)$ (see Proposition 4.4.5 (ii) of \cite{Sp}). Henceforth, we will treat $\sG$ as an algebraic subgroup of $\sSL(n,\R)$ and let $\aleph: \sSL(n,\C)\times \sSL(n,\C) \to \sGL(2n,\C)$ be as in Subsection \ref{subsec.carpro}. Hence $\aleph(\sG_\C\times\sG_\C)$ is a Zariski closed subgroup of $\sGL(2n,\C)$. Now for any $g_{11},g_{12},g_{21},g_{22}\in\mathfrak{gl}(n,\C)$ with $[g_{11},g_{12};g_{21},g_{22}]\in\sGL(2n,\C)$ we define $\cP:\sGL(2n,\C)\to\C$ such that \[\cP([g_{11},g_{12};g_{21},g_{22}]):=\ttr(g_{11}^2)-\ttr(g_{22}^2).\]
We observe that $\cP$ is algebraic and we denote the zero set of $\cP$ inside $\sGL(2n,\C)$ by $\sZ_\cP$. 

\begin{lemma}\label{lem.eqjor}
Let $\sG$ be a connected semisimple real algebraic Lie group with trivial center and without compact factors. Then for $g,h\in\sG$ with $g,h$ loxodromic, we have $\aleph(g,h)\in\sZ_\cP$, whenever $\lambda(g)=\lambda(h)$.
\end{lemma}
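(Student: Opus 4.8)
The plan is to follow the proof of Lemma~\ref{lem.eqcar} almost verbatim, replacing the Cartan decomposition by the Jordan decomposition and using the splitness of $\sG$ in an essential way. In the matrix realisation $\iota\circ\tAd$ the element $\aleph(g,h)$ is block-diagonal with diagonal blocks $g$ and $h$, so $\cP(\aleph(g,h))=\ttr(g^2)-\ttr(h^2)=\ttr(\tAd(g^2))-\ttr(\tAd(h^2))$, the trace of a conjugate being unchanged. Hence it is enough to prove that for a loxodromic $g\in\sG$ the quantity $\ttr(\tAd(g^2))$ depends only on $\lambda(g)\in\fa^+$; granting this and taking $g,h$ with $\lambda(g)=\lambda(h)$ gives $\cP(\aleph(g,h))=0$, i.e.\ $\aleph(g,h)\in\sZ_\cP$.

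To prove the claim I would analyse the Jordan decomposition $g=g_\te g_\tth g_\tu$ of a loxodromic element. After conjugating (which leaves $\ttr(\tAd(g^2))$ unchanged) we may assume $g_\tth=\exp(\lambda(g))$ with $\lambda(g)\in\fa^{++}$, and then $g_\te$ and $g_\tu$, commuting with $g_\tth$, lie in $\sZ_\sG(\exp(\lambda(g)))$. The structural point is that regularity of $\lambda(g)$ forces $\sZ_\sG(\exp(\lambda(g)))=\sM\sA$: this centralizer normalizes $\fa$, the $1$-eigenspace of $\tAd(\exp(\lambda(g)))$, hence lies in $\sN_\sG(\fa)$; it contains $\sZ_\sG(\fa)=\sM\sA$, and its image in $\sN_\sG(\fa)/\sZ_\sG(\fa)\cong W$ consists of Weyl elements fixing the regular element $\exp(\lambda(g))$ and is therefore trivial. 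Since $\sM\sA$ has no nontrivial unipotent element, $g_\tu=e$; and writing $g_\te=ma$ with $m\in\sM$, $a\in\sA$, ellipticity of $g_\te$ makes every eigenvalue of $\tAd(g_\te)$ have modulus $1$, which forces $\beta(\log a)=0$ for all $\beta\in\Sigma$, hence $a=e$ and $g_\te=m\in\sM$.

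It remains to observe that, $\sG$ being split, each restricted root space $\fg_\beta$ is one-dimensional and real, so every $m\in\sM\subset\sK$ acts on $\fg_\beta$ by a real scalar of modulus $1$, that is by $\pm1$, while $\tAd(m)$ fixes $\fg_0=\fa$ pointwise by definition of $\sM$. Hence $\tAd(m^2)$ acts trivially on $\fa$ and on every $\fg_\beta$, so $\tAd(m^2)=\mathrm{id}$, and triviality of the centre yields $m^2=e$. Consequently a loxodromic $g$ is conjugate to $m\exp(\lambda(g))$ with $m\in\sM$, $m^2=e$, and $m$ commuting with $\exp(\lambda(g))$, so $g^2$ is conjugate to $m^2\exp(2\lambda(g))=\exp(2\lambda(g))$; therefore $\ttr(\tAd(g^2))=\ttr(\exp(2\,\tad_{\lambda(g)}))$, which depends only on $\lambda(g)$. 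This proves the claim, and the lemma follows.

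The only non-routine ingredients are the identification $\sZ_\sG(\exp(\lambda(g)))=\sM\sA$ for a regular $\lambda(g)$, from which a loxodromic element has trivial unipotent part and an elliptic part inside $\sM$, and the fact that elements of $\sM$ have order at most $2$ when $\sG$ is split; the remaining trace manipulations are purely formal, exactly as in Lemma~\ref{lem.eqcar}.
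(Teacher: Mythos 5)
Your proof is correct and follows essentially the same route as the paper's: reduce to showing that $g^2$ is conjugate to $\exp(2\lambda(g))$, then conclude by trace invariance. The paper simply cites Proposition 2.31 of Thirion for the decomposition $g = h\,m\exp(\lambda_g)\,h^{-1}$ with $m\in\sM$ and Theorem 7.5.3 of Knapp for $m^2=e$ in the split case, whereas you reprove both facts from scratch (regularity of $\lambda(g)$ forcing $\sZ_\sG(\exp\lambda(g))=\sM\sA$, hence $g_\tu=e$ and $g_\te\in\sM$, and the one-dimensionality of the restricted root spaces forcing $\tAd(m)^2=\mathrm{id}$), which makes your argument more self-contained but otherwise identical in substance.
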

\begin{proof}
Let $g_1,g_2\in\sG$ and $g_1,g_2$ be loxodromic. Hence by Proposition 2.31 of \cite{Thi} there exists $h_1,h_2\in\sG$ and $m_1,m_2\in\sM$ such that $g_1=h_1m_1(\exp\lambda_{g_1})h_1^{-1}$ and $g_2=h_2m_2(\exp\lambda_{g_2})h_2^{-1}$. As $\sG$ is split, we use Theorem 7.5.3 of \cite{Knapp} and obtain that $m_i^2=e$ for both $i=1,2$. Hence we conclude by observing that for any loxodromic elements $g_1,g_2\in\sG$ with $\lambda(g_1)=\lambda(g_2)$ we have
\begin{align*}
    \cP(\aleph(g_1,g_2))&=\ttr(g_1^2)-\ttr(g_2^2)=\ttr(m_1^2(\exp\lambda_{g_1})^2)-\ttr(m_2^2(\exp\lambda_{g_2})^2)\\
    &=\ttr((\exp\lambda_{g_1})^2)-\ttr((\exp\lambda_{g_2})^2)=0.
\end{align*}
\end{proof}

\begin{lemma}\label{lem.nonempjor}
Let $\sG$ be a connected real split semisimple algebraic Lie group with trivial center and without compact factors. Let $\sN_1,\sN_2$ be two normal subgroups of $\sG$ such that there exists an isomorphism $\sigma:\sG/\sN_1\to\sG/\sN_2$ and let $\aleph(\sX)\subset\sZ_\cP$ where
\[\sX:=\{(g,h)\mid h\sN_2=\sigma(g\sN_1)\}.\]
Then both $\sN_1$ and $\sN_2$ are trivial.
\end{lemma}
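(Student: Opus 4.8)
The plan is to mimic the structure of the proof of Lemma \ref{lem.nonempcar}, replacing the Cartan-projection inequality with its Jordan-projection counterpart and using the hypothesis that $\sG$ is split. First I would analyze the zero set $\sZ_\cP$ on each factor: for $g\in\sG$ with Jordan decomposition $g=g_\te g_\tth g_\tu$, write $g=h\,m(\exp\lambda_g)\,h^{-1}$ (using that $\sG$ is split, so the elliptic part of a general element is conjugate into $\sM$ with $m^2=e$ by Theorem 7.5.3 of \cite{Knapp}, together with Proposition 2.31 of \cite{Thi}); then $\ttr(g^2)=\ttr(m^2(\exp\lambda_g)^2)=\ttr((\exp\lambda_g)^2)=\sum_i e^{2\lambda_g(H_i)}\geq\dim\fg$ for the relevant basis, with equality forcing $\lambda_g=0$. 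Hence $\aleph(\sG\times\{e\})\cap\sZ_\cP$ consists exactly of those $\aleph(g,e)$ with $\lambda_g=0$, i.e.\ with $g$ having trivial hyperbolic part; equivalently $g$ lies in the set of elements whose eigenvalues all have modulus $1$. In a connected semisimple real algebraic group with no compact factors this locus is a proper subvariety and does not contain any nontrivial normal subgroup.

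Next I would run the argument by contradiction exactly as in Lemma \ref{lem.nonempcar}. Suppose $\sN_1$ is nontrivial. Since $\aleph(\sX)\subset\sZ_\cP$ and $\sN_1\times\{e\}=\sX\cap(\{e\}\times\sG)$... more precisely, $\{(n,e)\mid n\in\sN_1\}\subset\sX$ because $\sigma(n\sN_1)=\sigma(\sN_1)=\sN_2=e\sN_2$, so $\aleph(\sN_1\times\{e\})\subset\aleph(\sX)\subset\sZ_\cP$. By the analysis of the previous paragraph this forces every element of $\sN_1$ to have trivial hyperbolic part, i.e.\ all eigenvalues of every $n\in\sN_1$ (acting via the embedding $\iota\circ\tAd$) have modulus $1$. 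A connected semisimple algebraic group all of whose elements have this property is compact; so $\sN_1$, being a connected normal subgroup of $\sG$ with this property, would be a compact factor of $\sG$, contradicting the hypothesis that $\sG$ has no compact factors (here one uses that normal connected subgroups of a semisimple group are products of simple factors, Theorem 4.3 of \cite{Milne}). Symmetrically, using $\{(e,n)\mid n\in\sN_2\}\subset\sX$ and $\ttr(g_{22}^2)$, one gets $\sN_2$ trivial.

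The main obstacle is the verification that "$\lambda_g=0$ for all $g$ in a connected normal subgroup $\sN$" forces $\sN$ to be compact, and that equality $\ttr((\exp\lambda_g)^2)=\dim\fg$ really does force $\lambda_g=0$. For the latter: in the basis chosen in Subsection \ref{subsec.carpro} the operator $\tad_{\exp\lambda_g}$ — or rather the image of $\exp\lambda_g$ under $\iota$ — is diagonal with entries $e^{\mu(\lambda_g)}$ over the weights $\mu$ of the representation, and $\ttr((\exp\lambda_g)^2)=\sum_\mu e^{2\mu(\lambda_g)}$; since the weights span $\fa^*$ and come in $\pm$ pairs together with zero weights, $\sum_\mu e^{2\mu(\lambda_g)}\geq\sum_\mu 1=\dim\fg$ by AM--GM, with equality iff $\mu(\lambda_g)=0$ for all $\mu$, hence $\lambda_g=0$. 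For the former: if $\lambda_g=0$ for all $g\in\sN$ then $\sN$ contains no loxodromic (indeed no hyperbolic non-trivial) element, so its image under $\iota\circ\tAd$ consists of matrices with all eigenvalues of modulus one; a connected semisimple real algebraic group with that property has compact real points, i.e.\ is a compact factor — this is where "no compact factors" is consumed. Everything else is the same bookkeeping as in Lemma \ref{lem.nonempcar}, so I would keep the exposition parallel to that proof.
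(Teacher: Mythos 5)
Your proof is in the same spirit as the paper's, but one intermediate claim is wrong, and it is worth flagging because the analogy with Lemma \ref{lem.nonempcar} is not as tight as you suggest.

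In the Cartan case the equality $\sZ_\cA\cap\aleph(\sG\times\{e\})=\aleph(\sK\times\{e\})$ is genuine, because $\ttr(g^tg)$ is the squared Frobenius norm and the AM--GM bound $\ttr(g^tg)\geq n$ holds for \emph{every} $g\in\sG$, with equality exactly on $\sK$. Your corresponding claim that $\sZ_\cP\cap\aleph(\sG\times\{e\})$ is exactly $\{\aleph(g,e):\lambda_g=0\}$ is false: for an elliptic element with genuinely complex eigenvalues one has $\lambda_g=0$ but $\ttr(g^2)$ can be anything from $-n$ to $n$ (e.g.\ a quarter turn in $\sSL_2(\R)$ has $\ttr(g^2)=-2$). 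The identity $\ttr(g^2)=\ttr((\exp\lambda_g)^2)$ that feeds your AM--GM argument is only available when $g$ is loxodromic (or hyperbolic), which is precisely the setting of Proposition 2.31 of \cite{Thi} and of the split-group fact $m^2=e$; for a general $g$ the elliptic part need not land in $\sM$, and the displayed reduction breaks down. Consequently your inference ``$\aleph(\sN_1\times\{e\})\subset\sZ_\cP\Rightarrow$ every element of $\sN_1$ has trivial hyperbolic part'' is not justified as written.

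Fortunately only the one-sided implication is needed, and that part of your argument is sound: if $g\neq e$ is hyperbolic, then $g$ is conjugate to $\exp\lambda_g$ with $\lambda_g\neq0$, and $\ttr(g^2)=\sum_\mu e^{2\mu(\lambda_g)}>n$ strictly by AM--GM applied to the $\pm$ pairs of restricted roots. Since a nontrivial normal subgroup $\sN_1$ of $\sG$ is a product of noncompact simple factors (Theorem 4.3 of \cite{Milne}), it contains such a hyperbolic element $g$; then $(g,e)\in\sX$ but $\cP(\aleph(g,e))>0$, contradicting $\aleph(\sX)\subset\sZ_\cP$. This is exactly what the paper asserts in one line (``there exists a semisimple element $g\in\sN_1$ with $\ttr(g^2)\neq n$''), so the two proofs coincide in substance; yours fills in the AM--GM justification that the paper leaves implicit, but you should drop the incorrect exact description of $\sZ_\cP$ on the factors and argue directly that loxodromic elements lie outside $\sZ_\cP$.

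Two small typographical slips: in the first paragraph you write $\sZ_\cA$ where you mean $\sZ_\cP$, and the line ``$\sN_1\times\{e\}=\sX\cap(\{e\}\times\sG)$'' has the factors crossed; the correct containment, which you state immediately afterwards, is $\{(g,e):g\in\sN_1\}\subset\sX$.
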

\begin{proof}
We will prove our result via contradiction in two parts.

$\diamond$ If possible let us assume that $\sN_1$ is not trivial. Then there exists a semisimple element $g\in\sN_1$ such that $\ttr(g^2)\neq n$ (see Theorem of 4.3 \cite{Milne}). Hence $(g,e)\in\sX$ and \[\cP(\aleph(g,e))=\ttr(g^2)-\ttr(e)=\ttr(g^2)-n\neq0,\]
i.e. $\aleph(g,e)\notin\sZ_\cP$, a contradiction to the fact that $\aleph(\sX)\subset\sZ_\cP$.

$\diamond$ If possible let us assume that $\sN_2$ is not trivial. Then there exists a semisimple element $h\in\sN_2$ such that $\ttr(h^2)\neq n$. Hence $(e,h)\in\sX$ and \[\cP(\aleph(e,h))=\ttr(e)-\ttr(h^2)=n-\ttr(h^2)\neq0,\]
i.e. $\aleph(e,h)\notin\sZ_\cP$, a contradiction to the fact that $\aleph(\sX)\subset\sZ_\cP$.
\end{proof}

\begin{theorem}\label{thm2}
Let $\sG$ be a connected real split semisimple algebraic Lie group with trivial center and without compact factors. Let $\Gamma$ be a finitely generated group with a finite symmetric generating set $\sS \subset \Gamma$ and let $\cS$ be as in Proposition \ref{prop.linfinite}. Then, for $m=\max\{M_{\aleph(\sY_\C)\cap\sZ_\cP}\mid\sY\in\cS\}$ and two representations $\rho_1,\rho_2: \Gamma \to \sG$ which satisfy the following:
\begin{enumerate}
    \item $\rho_i(\Gamma)$ is Zariski dense in $\sG$,
    \item $\rho_i(\gamma)$ is loxodromic for all $\gamma\in\Gamma$,
    \item $\lambda(\rho_1(\gamma))=\lambda(\rho_2(\gamma))$ for all $\gamma \in \sB_\sS(m)$,
\end{enumerate}
are equivalent; in other words, there exists an automorphism $\sigma:\sG \to \sG$ such that we have $\sigma \circ \rho_1=\rho_2$.
\end{theorem}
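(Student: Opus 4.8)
The plan is to run the argument of Theorem~\ref{thm1} essentially verbatim, with the Cartan auxiliary polynomial $\cA$ and its zero set $\sZ_\cA$ replaced by the Jordan analogues $\cP$ and $\sZ_\cP$, and with Lemmas~\ref{lem.eqjor} and~\ref{lem.nonempjor} playing the roles that Lemmas~\ref{lem.eqcar} and~\ref{lem.nonempcar} played there. First I would invoke Proposition~\ref{prop.linfinite}: since $\rho_1(\Gamma)$ and $\rho_2(\Gamma)$ are Zariski dense in $\sG$, the Zariski closure $\sX$ of $(\rho_1,\rho_2)(\Gamma)$ inside $\sG\times\sG$ has the form $\sX=\{(g,h)\mid h\sN_2=\sigma(g\sN_1)\}$ for normal subgroups $\sN_1,\sN_2$ of $\sG$ and an isomorphism $\sigma:\sG/\sN_1\to\sG/\sN_2$; in particular $\sX\in\cS$. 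Let $\sX_\C$ be the Zariski closure of $\sX$; then $\aleph(\sX_\C)$ is a Zariski closed subgroup of $\sGL(2n,\C)$ containing $\aleph\circ(\rho_1,\rho_2)(\Gamma)$ as a Zariski dense subgroup. It is Zariski connected, because in a connected trivial-center semisimple group the normal subgroups $\sN_1,\sN_2$ are products of simple factors and hence connected, so $\sX$ is an extension of $\sG$ by $\sN_1$ and therefore connected. By the definition of $m$ we have $M_{\aleph(\sX_\C)\cap\sZ_\cP}\le m$.

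Next I would feed in the loxodromicity and spectrum hypotheses. By hypothesis each $\rho_i(\gamma)$ is loxodromic, and for $\gamma\in\sB_\sS(m)$ we have $\lambda(\rho_1(\gamma))=\lambda(\rho_2(\gamma))$; Lemma~\ref{lem.eqjor} then gives $\aleph(\rho_1(\gamma),\rho_2(\gamma))\in\sZ_\cP$, and $\aleph(\rho_1(\gamma),\rho_2(\gamma))\in\aleph(\sX_\C)$ since $(\rho_1,\rho_2)(\gamma)\in\sX$. Hence $\aleph(\rho_1(\gamma),\rho_2(\gamma))$ lies in the closed subset $\sZ_\cP\cap\aleph(\sX_\C)$ of $\aleph(\sX_\C)$ for every $\gamma\in\sB_\sS(m)$, and a fortiori for every $\gamma\in\sB_\sS(M_{\aleph(\sX_\C)\cap\sZ_\cP})$. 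Since the homomorphism $\aleph\circ(\rho_1,\rho_2)$ does not increase word length, this means the ball of radius $M_{\aleph(\sX_\C)\cap\sZ_\cP}$ in $\aleph\circ(\rho_1,\rho_2)(\Gamma)$ with respect to the image generating set is contained in $\sZ_\cP\cap\aleph(\sX_\C)$. Proposition~\ref{prop.main} then forces $\sZ_\cP\cap\aleph(\sX_\C)$ not to be a proper subvariety of $\aleph(\sX_\C)$, i.e. $\aleph(\sX_\C)\subset\sZ_\cP$, and therefore $\aleph(\sX)\subset\sZ_\cP$.

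The last step is purely structural: applying Lemma~\ref{lem.nonempjor} to $\aleph(\sX)\subset\sZ_\cP$ yields that both $\sN_1$ and $\sN_2$ are trivial, so $\sX$ is the graph of an isomorphism $\sigma:\sG\to\sG$, that is, an automorphism of $\sG$; Goursat's lemma~\cite{Gour} then gives $\sigma\circ\rho_1=\rho_2$, as claimed.

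I do not expect a genuinely hard step here, since all the structural input is packaged in the cited results; the point that requires care — and where the split hypothesis is essential — is the choice of the polynomial $\cP$ and the verification of Lemma~\ref{lem.eqjor}. Unlike the Cartan projection, the Jordan projection is conjugation invariant, so $\cP$ must be built from genuinely conjugation-stable data; the function $\ttr(g_{11}^2)-\ttr(g_{22}^2)$ works precisely because, for split $\sG$, the $\sM$-component $m_i$ of a loxodromic element satisfies $m_i^2=e$ (Theorem 7.5.3 of~\cite{Knapp}), so that $\ttr(g_i^2)=\ttr((\exp\lambda_{g_i})^2)$ depends only on $\lambda(g_i)$; without splitness this identity fails and one would need a different invariant. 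The only other thing to check carefully is the Zariski connectedness of $\aleph(\sX_\C)$ needed to apply Proposition~\ref{prop.main}, which holds because $\sN_1,\sN_2$ are connected.
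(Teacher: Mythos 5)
Your proof is correct and follows essentially the same route as the paper: invoke Proposition~\ref{prop.linfinite} to identify the Zariski closure $\sX$ of $(\rho_1,\rho_2)(\Gamma)$, use Lemma~\ref{lem.eqjor} to place $\aleph(\rho_1(\gamma),\rho_2(\gamma))$ in $\sZ_\cP\cap\aleph(\sX_\C)$ for $\gamma\in\sB_\sS(m)$, apply Proposition~\ref{prop.main} to force $\aleph(\sX_\C)\subset\sZ_\cP$, and then conclude via Lemma~\ref{lem.nonempjor} and Goursat. The one point you make that the paper leaves implicit is the verification that $\aleph(\sX_\C)$ is Zariski connected (a hypothesis of Proposition~\ref{prop.main}), which you correctly justify by noting that the normal subgroups of a connected centerless semisimple group are products of simple factors and hence connected, so $\sX$ (and therefore its Zariski closure) is connected; this is a worthwhile clarification rather than a change of approach.
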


\begin{proof}
We use Proposition \ref{prop.linfinite} and observe that the Zariski closure of the group $(\rho_1,\rho_2)(\Gamma)$ inside $\sG\times\sG$ is
\[\sX:=\{(g,h)\mid h\sN_2=\sigma(g\sN_1)\}\]
for some normal subgroups $\sN_1,\sN_2$ of $\sG$ and isomorphism $\sigma:\sG/\sN_1\to\sG/\sN_2$. Let $\sX_\C$ be the Zariski closure of $\sX$ inside $\sGL(2n,\C)$. As $\aleph$ is a regular embedding, we observe that $\sZ_\cP\cap\aleph(\sX_\C)$ is a subvariety of $\sGL(2n,\C)$. Moreover, we have $M_{\aleph(\sX_\C)\cap\sZ_\cP}\leq m$. 

We use Lemma \ref{lem.eqjor} and observe that $\aleph(\rho_1(\gamma),\rho_2(\gamma))\in\sZ_\cP\cap\aleph(\sX_\C)$ for all $\gamma\in\sB_\sS(m)$. Hence $\aleph(\rho_1(\gamma),\rho_2(\gamma))\in\sZ_\cP\cap\aleph(\sX_\C)$ for all $\gamma\in\sB_\sS(M_{\aleph(\sX_\C)\cap\sZ_\cP})$. Therefore, by Proposition \ref{prop.main} we obtain that $\sZ_\cP\cap\aleph(\sX_\C)$ is not a proper subvariety of $\aleph(\sX_\C)$. It follows that $\aleph(\sX_\C)\subset\sZ_\cP$. Now we use Lemma \ref{lem.nonempjor} and deduce that $\sN_1$ and $\sN_2$ are trivial. Finally, we use Goursat's lemma \cite{Gour} to conclude the proof.
\end{proof}

\begin{remark}
We note that if the condition of being Zariski dense is dropped from the assumptions then we get counterexamples. Indeed, let $\Gamma\subset\sSO(2,1)$ be a discrete free group with two generators and let $u:\Gamma\to\R^3$ be a map such that
\[u(\gamma\eta)=\gamma u(\eta)+u(\gamma)\]
for all $\gamma,\eta\in\Gamma$. We note that by \cite{Margulis1,Margulis2} there exists $u$ such that the Zariski closure of $\Gamma_u:=\{[\gamma,u(\gamma);0,1]\mid \gamma\in\Gamma\}\subset\sSL(4,\R)$ is
\[\sSO(2,1)\ltimes\R^3:=\{[g,v;0,1]\mid g\in\sSO(2,1), v\in\R^3\}\subset\sSL(4,\R).\]
Now we choose 
$\Gamma_0:=\{[\gamma,0;0,1]\mid \gamma\in\Gamma\}\subset\sSL(4,\R)$
and observe that the Zariski closure of $\Gamma_0$ is 
\[\{[g,0;0,1]\mid g\in\sSO(2,1)\}\subset\sSL(4,\R)\]
but $\lambda([\gamma,0;0,1])=\lambda([\gamma,u(\gamma);0,1])$ for all $\gamma\in\Gamma$.
\end{remark}

%%%%%%%%%%%%%%%%%%%%%%%%%%%%%%%%%%%%%%%%%%%%%%%%%%%%%%%%%%%%%%%%%%%%%%%%%%%%%%%%%%%%%%%%%
\subsection{Margulis-Smilga spectrum}\label{subsec.marglin}
%%%%%%%%%%%%%%%%%%%%%%%%%%%%%%%%%%%%%%%%%%%%%%%%%%%%%%%%%%%%%%%%%%%%%%%%%%%%%%%%%%%%%%%%%

In this subsection we will prove finite step rigidity of the Margulis-Smilga invariant spectrum for representations whose linear part is fixed.

Let $\sG$ be a real split semisimple algebraic Lie group of noncompact type with trivial center, $\sV$ be a finite dimensional vector space and let $\tR:\sG\to\mathsf{GL}(\sV)$ be a faithful irreducible algebraic representation. Let $\tdR:\fg\to\mathfrak{gl}(\sV)$ be the corresponding Lie algebra representation. We recall that the space of all linear forms on $\fa$ is denoted by $\fa^*$ and for all $\lambda\in\fa^*$ we consider
\[\sV^\lambda:=\{X\in\sV\mid \tdR_H(X)=\lambda(H)X\text{ for all } H\in\fa\}.\] 
We call $\lambda\in\fa^*$ a \textit{restricted weight} of the representation $\tR$ if and only if both $\lambda\neq0$ and $\sV^\lambda\neq0$. We denote the set of all nonzero restricted weights by $\Omega\subset\fa^*$. As $\sV$ is finite dimensional, we have $\Omega$ is a finite set. Moreover, we note that
\[\sV=\sV^0\oplus\bigoplus_{\lambda\in\Omega}\sV^\lambda.\]
We denote $\bigoplus_{\lambda\in\Omega}\sV^\lambda$ by $\sV^{\neq0}$. Let $\pi_0$ be the projection, with respect to the above decomposition, from $\sV$ onto $\sV^0$.

\begin{definition}\label{def.margi}
Let $(g,X)\in\GV$, let $g$ be loxodromic and let $g_\tth$ be the hyperbolic part of $g$ with respect to the Jordan decomposition. Also, let $h\in\sG$ be such that $g_\tth=h\texp(\tJd_g)h^{-1}$. Then $\tM(g,X)$, the \textit{Margulis-Smilga} invariant of $(g,X)$, is defined as follows:
\[\tM(g,X):=\pi_0(\tR_{h}^{-1}X).\]
\end{definition}

\begin{notation} 
Let $\tA\in\mathfrak{gl}(\sV)$. Hence $(\tI-\tA)\in\mathfrak{gl}(\sV)$. We define
\[\tP_\tA(x)=\sum_{k=0}^{\dim(\sV^{\neq0})}(-1)^{\dim(\sV^{\neq0})-k}\ttr(\wedge^{\dim(\sV^{\neq0})-k}(\tI-\tA))x^k.\]
\end{notation}
\begin{remark}\label{rem.poly}
Let $(g,X)\in\GV$, let $g$ be loxodromic and let $g_\tth$ be the hyperbolic part of $g$ with respect to the Jordan decomposition. Also, let $h\in\sG$ be such that $g_\tth=h\texp(\tJd_g)h^{-1}$. We use Propositions  3.9 and 5.5 of \cite{Ghosh5} to note that
\[\tP_g(\tR_e-\tR_g)X=\tP_g(0)\tR_h\tM(g,X).\]
\end{remark}
Let $n$ be the dimension of $\sV$ then without loss of generality we can consider $\sV=\R^n$. Also, as $\tR$ is algebraic and $\sG$ has no center, we obtain an embedding of $\sG\ltimes_\tR\sV$ inside $\sGL(n+1,\R)$ via the following map:
\[(g,X)\to\begin{bmatrix}\tR_g&X\\0&1\end{bmatrix}.\]
Henceforth, we will treat $\sG\ltimes_\tR\sV$ as an algebraic subgroup of $\sGL(n+1,\R)$ and denote the Zariski closure of $\sH:=\sG\ltimes_\tR\sV$ inside $\sGL(n+1,\C)$ by $\sH_\C$. Now for any $\tA\in\mathfrak{gl}(n,\C)$, $X,Y\in\C^n$, $k\in\R$ with $[\tA,X;Y^t,k]\in\sGL(n+1,\C)$ we define
$\cR:\sGL(n+1,\C)\to\C^n$ such that
\[\cR([\tA,X;Y^t,k]):=\tP_\tA(\tI-\tA)X.\]
We observe that $\cR$ is algebraic and we denote the zero set of $\cR$ inside $\sGL(n+1,\C)$ by $\sZ_\cR$. Now using Remark \ref{rem.poly}, it follows that for all loxodromic elements $g\in\sG$ and $X\in\sV$ with $\tM(g,X)=0$, we have $(\tR_g,X)\in\sZ_\cR$.

\begin{theorem}\label{thm3}
Let $\sG$ be a connected real split semisimple algebraic Lie group with trivial center and without compact factors. Let $\Gamma$ be a finitely generated group with a finite symmetric generating set $\sS \subset \Gamma$. Then, for $m=M_{\sH_\C\cap\sZ_\cR}$ and two representations $\rho_1,\rho_2: \Gamma \to \sH$ which satisfy the following:
\begin{enumerate}
    \item $\tL_{\rho_1}=g\tL_{\rho_2}g^{-1}$ for some $g\in\sG$,
    \item $\tL_{\rho_i}(\Gamma)$ is Zariski dense in $\sG$,
    \item $\tL_{\rho_i}(\gamma)$ is loxodromic for all $\gamma\in\Gamma$,
    \item $\tM(\rho_1(\gamma))=\tM(\rho_2(\gamma))$ for all $\gamma \in \sB_\sS(m)$,
\end{enumerate}
are equivalent; in other words, there exists $X\in\sV$ such that we have $(g,X)^{-1}\rho_1(g,X)=\rho_2$.
\end{theorem}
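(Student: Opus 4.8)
The plan is to mirror the structure of the proofs of Theorems \ref{thm1} and \ref{thm2}, but now working inside the affine group $\sH = \sG\ltimes_\tR\sV$ rather than a semisimple group, and using Proposition \ref{prop.affinite} in place of Proposition \ref{prop.linfinite}. First I would reduce to the case $\rho_1,\rho_2$ have the \emph{same} linear part: by hypothesis (1) there is $g\in\sG$ with $\tL_{\rho_1}=g\tL_{\rho_2}g^{-1}$, so after replacing $\rho_2$ by $(g,0)\rho_2(g,0)^{-1}$ (which changes $\tM$ only by the obvious conjugation and preserves the hypotheses, since conjugating by a linear element of $\sG$ does not change Jordan-Lyapunov projections or the loxodromic condition) we may assume $\tL_{\rho_1}=\tL_{\rho_2}=:\varrho$. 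Thus $\rho_i=(\varrho,u_i)$ for cocycles $u_i:\Gamma\to\sV$, and the goal becomes: if $\tM(\varrho(\gamma),u_1(\gamma))=\tM(\varrho(\gamma),u_2(\gamma))$ for all $\gamma\in\sB_\sS(m)$, then $u_1-u_2$ is a coboundary, i.e. there is $X\in\sV$ with $u_1(\gamma)-u_2(\gamma)=(\tI-\tR_{\varrho(\gamma)})X$ for all $\gamma$.

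Next I would set up the variety-theoretic machinery exactly as before. Let $\sX$ be the Zariski closure of $(\rho_1,\rho_2)(\Gamma)$ inside $\sH\times\sH\subset\sGL(n+1,\R)\times\sGL(n+1,\R)$, and $\sX_\C$ its complexification. Because $\tL_{\rho_i}(\Gamma)$ is Zariski dense in $\sG$ and $\tR$ is irreducible, Proposition \ref{prop.affinite} applies: $\sX$ is the graph of a smooth isomorphism $\sigma:\sH/\sN_1\to\sH/\sN_2$ for normal subgroups $\sN_1,\sN_2$ of $\sH$, and moreover (by Proposition A.2 of \cite{Ghosh5} as cited there) each $\sN_i$ is either trivial or of the form $\sG_i\ltimes_\tR\sV$. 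The key point is to define on $\sGL(n+1,\C)\times\sGL(n+1,\C)$ an algebraic vector-valued function $\cR_\Delta$ analogous to $\cA,\cP$: using the notation of Remark \ref{rem.poly}, set $\cR_\Delta$ of the pair $\bigl([\tA,X;Y^t,k],[\tB,X';Z^t,k']\bigr)$ to be $\tP_\tA(0)\,\tP_\tB(\tI-\tB)X' - \tP_\tB(0)\,\tP_\tA(\tI-\tA)X$, an algebraic map, and denote its zero set $\sZ_\Delta$. Since the linear parts agree, i.e. $\tA=\tB=\tR_{\varrho(\gamma)}$ along $(\rho_1,\rho_2)(\Gamma)$, Remark \ref{rem.poly} gives $\tP_{\varrho(\gamma)}(\tR_e-\tR_{\varrho(\gamma)})u_i(\gamma)=\tP_{\varrho(\gamma)}(0)\tR_{h_\gamma}\tM(\rho_i(\gamma))$ for a common $h_\gamma\in\sG$ (the Jordan data depend only on $\varrho(\gamma)$), so the hypothesis $\tM(\rho_1(\gamma))=\tM(\rho_2(\gamma))$ forces $(\rho_1(\gamma),\rho_2(\gamma))\in\sZ_\Delta$ for all $\gamma\in\sB_\sS(m)$.

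Then the endgame runs as in Theorems \ref{thm1} and \ref{thm2}: choosing $m=M_{\sH_\C\cap\sZ_\cR}$ (equivalently the $M$-constant of the relevant subvariety of the ambient $\sGL$) guarantees $\sB_\sS(m)$ exceeds the escape bound $M_{\aleph(\sX_\C)\cap\sZ_\Delta}$, so by Proposition \ref{prop.main} the subvariety $\sZ_\Delta\cap(\text{graph of }\sigma)$ cannot be proper; hence the entire graph lies in $\sZ_\Delta$, which says $\tM(\rho_1(\gamma))=\tM(\rho_2(\gamma))$ as elements vary over all of $\Gamma$, not just $\sB_\sS(m)$. Finally one invokes the infinite-data rigidity for Margulis-Smilga invariants with fixed linear part --- this is precisely the theorem of Ghosh generalizing Kim, used in \cite{Ghosh5} --- to conclude that $u_1-u_2$ is a coboundary $(\tI-\tR_{\varrho(\cdot)})X$, i.e. $(g,X)^{-1}\rho_1(g,X)=\rho_2$ after undoing the initial reduction. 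The main obstacle I anticipate is the bookkeeping in the second step: making sure the auxiliary algebraic function $\cR_\Delta$ is genuinely well-defined on the ambient product group (clearing denominators from the $\tP_\tA(0)$ factors, which vanish on a proper subvariety but not on $\sH$ since $g_\tth$ is hyperbolic there), verifying that $\sZ_\Delta$ restricted to $\sH_\C\times\sH_\C$ contains no normal-subgroup fibre unless $\sN_i$ is trivial (the affine analogue of Lemmas \ref{lem.nonempcar} and \ref{lem.nonempjor}, where one must exhibit, for a nontrivial $\sG_i\ltimes_\tR\sV$, an element with nonzero Margulis-Smilga invariant --- essentially because $\tR$ restricted to a nontrivial normal factor still has a nonzero $\sV^0$-component, using that the highest weight is nonzero), and tracking the constant $m$ so that it depends only on $\sG\ltimes_\tR\sV$ and not on $\Gamma$ or $\sS$, which is exactly what the uniform finiteness statement of Proposition \ref{prop.affinite}(2) delivers.
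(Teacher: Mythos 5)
Your proposal takes a genuinely different route from the paper's, and it has several gaps that the paper's own approach sidesteps.

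The paper exploits hypothesis (1) much more aggressively than you do. After noting the linear parts agree (up to the fixed $g$), the paper forms the single \emph{difference} representation $\rho := (\tL_{\rho_1},\,\tT_{\rho_1}-\tR_g\tT_{\rho_2}) : \Gamma\to\sH$. Since $\tM(\,\cdot\,,X)$ is linear in the translation variable and invariant under conjugation by $(g,0)$, one has $\tM(\rho(\gamma)) = \tM(\rho_1(\gamma))-\tM(\rho_2(\gamma))$, so the hypothesis becomes simply $\tM(\rho(\gamma))=0$ on $\sB_\sS(m)$, i.e.\ $\rho(\gamma)\in\sZ_\cR$. This lets the argument stay inside $\sH$ rather than $\sH\times\sH$: the escape lemma (Proposition \ref{prop.main}) applied to $\sH_\C\cap\sZ_\cR$ forces the Zariski closure $\sX$ of $\rho(\Gamma)$ to be a proper subvariety of $\sH$, then Proposition A.2 of \cite{Ghosh5} forces $\ker(\tL|_\sX)$ to be trivial, so $\sX$ is the graph of a cocycle $\sG\to\sV$, and Whitehead's lemma makes it a coboundary. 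No appeal to Proposition \ref{prop.affinite}, no auxiliary function on $\sGL(2n+2,\C)$, and no external infinite-data rigidity theorem is needed.

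By contrast, your version has three concrete problems. First, Proposition \ref{prop.affinite} requires $\rho_i(\Gamma)$ to be Zariski dense in $\sH=\sG\ltimes_\tR\sV$, not merely that $\tL_{\rho_i}(\Gamma)$ is Zariski dense in $\sG$; the latter does \emph{not} imply the former (take $u_i\equiv 0$, so $\overline{\rho_i(\Gamma)}=\sG\times\{0\}$), so you cannot invoke it unconditionally. In Theorem \ref{thm4} the paper handles exactly this by a case split (zero vs.\ nonzero normed spectrum, with Corollary 7.3 of \cite{Ghosh5} supplying density in the second case); your proposal omits that step. Second, the theorem's stated constant $m = M_{\sH_\C\cap\sZ_\cR}$ is the $M$-constant of a subvariety of $\sGL(n+1,\C)$, which is what the paper's single-group argument produces. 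Your argument needs a bound for $M_{\aleph(\sX_\C)\cap\sZ_\Delta}$ with $\sZ_\Delta\subset\sGL(2n+2,\C)$ (and over all possible graphs $\sX$), which is an entirely different quantity — closer to the $\max$ over $\cS$ appearing in Theorem \ref{thm4} — and you offer no comparison; your parenthetical ``(equivalently the $M$-constant of the relevant subvariety of the ambient $\sGL$)'' is not an argument. Third, your endgame deduces $\tM(\rho_1(\gamma))=\tM(\rho_2(\gamma))$ for \emph{all} $\gamma$ and then cites ``infinite-data rigidity for Margulis--Smilga invariants with fixed linear part.'' That result is not proved in this paper; the paper instead proves the conclusion directly (and self-containedly) via the cocycle/Whitehead argument, which also gives the explicit conjugating element $(g,X)$ that the statement demands.
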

\begin{proof}
We consider $\rho:=(\tL_{\rho_1},\tT_{\rho_1}-\tR_g\tT_{\rho_2})$ and let $\sX$ be the Zariski closure of $\rho(\Gamma)$ inside $\sH$. 

We start by showing that $\sX$ is a proper subvariety of $\sH$. Indeed, if it is not then $\sX=\sH$. Also, we have that $\sH_\C\cap\sZ_\cR$ is a proper subvariety of $\sH_\C=\sX_\C$. Now using Proposition \ref{prop.main} we obtain that $\rho(\sB_\sS(M_{\sH_\C\cap\sZ_\cR}))\not\subset\sH_\C\cap\sZ_\cR$, a contradiction to our hypothesis which states that $\tM(\rho_1(\gamma))=\tM(\rho_2(\gamma))$ for all $\gamma \in \sB_\sS(M_{\sH_\C\cap\sZ_\cR})$ i.e. $\rho(\gamma)\in\sH_\C\cap\sZ_\cR$ for all $\gamma\in \sB_\sS(M_{\sH_\C\cap\sZ_\cR})$.

Now we consider the map $\tL:\sX\to\sG$ and its kernel $\ker(\left.\tL\right|_{\sX})\subset\{\tI\}\ltimes\sV$. As $\tL_\rho(\Gamma)$ is Zariski dense inside $\sG$, we deduce that $\left.\tL\right|_{\sX}$ is surjective. Moreover, for all $(\tA,X)\in\sH$ we have
\[(\tA,X)(\tI,V)(\tA,X)^{-1}=(\tI,\tA V).\]
Hence, we deduce that $\ker(\left.\tL\right|_{\sX})$ is a normal subgroup of $\sH$. Therefore, using Proposition A.2 of \cite{Ghosh5} and the fact that $\sX$ is a proper subvariety of $\sH$, we deduce that $\ker(\left.\tL\right|_{\sX})$ is trivial. Hence, $\left.\tL\right|_{\sX}$ is an isomorphism between $\sX$ and $\sG$. The inverse of this isomorphism induces a smooth map $X:\sG\to\sV$ such that $X_{gh}=\tR_gX_h+X_g$ for all $g,h\in\sG$ and $\sX=\{(h,X_h)\mid h\in\sG\}$. Now using Whitehead's Lemma (see page 13 of \cite{Raghu}), we conclude that there exists $X\in\sV$ such that $X_h=X-\tR_hX$ for all $h\in\sG$. We conclude that $\rho_1=(g,X)\rho_2(g,X)^{-1}$. 
\end{proof}

%%%%%%%%%%%%%%%%%%%%%%%%%%%%%%%%%%%%%%%%%%%%%%%%%%%%%%%%%%%%%%%%%%%%%%%%%%%%%%%%%%%%%%%%%
\subsection{Normed Margulis-Smilga spectrum}
%%%%%%%%%%%%%%%%%%%%%%%%%%%%%%%%%%%%%%%%%%%%%%%%%%%%%%%%%%%%%%%%%%%%%%%%%%%%%%%%%%%%%%%%%

In this subsection we will prove finite step rigidity of the normed Margulis-Smilga invariant spectrum for self-contragredient irreducible representations of split semisimple Lie groups.

Let $\sG, \Gamma, \tR, \tM, \tP, \sH$ be as in Subsection \ref{subsec.marglin}. Moreover, let $\tR$ be such that it admits an invariant symmetric bilinear form. We denote the norm on $\sV_\C$ coming from this symmetric bilinear form by $\|\cdot\|_\tR$. Now for any $\tA\in\mathfrak{gl}(n,\C)$, $X,Y\in\C^n$, $k\in\R$ with $[\tA,X;Y^t,k]\in\sGL(n+1,\C)$ we define
$\|\cR\|^2:\sGL(n+1,\C)\to\C$ such that
\[\|\cR\|^2([\tA,X;Y^t,k]):=\|\tP_\tA(\tI-\tA)X\|^2_\tR.\]
We observe that $\|\cR\|^2$ is algebraic and we denote the zero set of $\|\cR\|^2$ inside $\sGL(n+1,\C)$ by $\sZ_{\|\cR\|^2}$. Now using Remark \ref{rem.poly}, it follows that for all loxodromic elements $g\in\sG$ and $X\in\sV$ with $\|\tM(g,X)\|_\tR=0$, we have $(\tR_g,X)\in\sZ_{\|\cR\|^2}$.

Also, let $\aleph$ be as in Subsection \ref{subsec.carpro}. Hence, we get that $\aleph(\sH_\C\times\sH_\C)$ is a Zariski closed subgroup of $\sGL(2n+2,\C)$. Now for $i,j\in\{1,2\}$ and any $\tA_{ij}\in\mathfrak{gl}(n,\C)$, $X_{ij}, Y_{ij}\in\C^n$, $k_{ij}\in\C$ with $\tB_{ij}:=[\tA_{ij},X_{ij};Y_{ij}^t,k_{ij}]$ and $[\tB_{11},\tB_{12};\tB_{21},\tB_{22}]\in\sGL(2n+2,\C)$ we define $\cE:\sGL(2n+2,\C)\to\C$ such that
\[\cE\left(\begin{bmatrix}\tB_{11}&\tB_{12}\\\tB_{21}&\tB_{22}\end{bmatrix}\right):=\|\tP_{\tA_{11}}(0)\tP_{\tA_{22}}(\tI-\tA_{22})X_{22}\|^2_\tR-\|\tP_{\tA_{22}}(0)\tP_{\tA_{11}}(\tI-\tA_{11})X_{11}\|^2_\tR.\]
We observe that $\cE$ is algebraic and we denote the set of zeroes of $\cE$ inside $\sGL(2n+2,\C)$ by $\sZ_\cE$. Now using Remark \ref{rem.poly}, it follows that for all loxodromic elements $g,h\in\sG$ and $X,Y\in\sV$, with $\tM(g,X)=\tM(h,Y)$, we have $\aleph(g,X,h,Y)\in\sZ_\cE$.

\begin{theorem}\label{thm4}
Let $\sG$ be a connected real split semisimple algebraic Lie group with trivial center and without compact factors. Let $\Gamma$ be a finitely generated group with a finite symmetric generating set $\sS \subset \Gamma$ and let $\cS$ be as in Proposition \ref{prop.affinite}. Then, for $m=\max(\{M_{\aleph(\sY_\C)\cap\sZ_\cE}\mid\sY\in\cS\}\cup\{M_{\sH_\C\cap\sZ_{\|\cR\|^2}}\})$ and two representations $\rho_1,\rho_2: \Gamma \to \sH$ which satisfy the following:
\begin{enumerate}
    \item $\tL_{\rho_i}(\Gamma)$ is Zariski dense in $\sG$,
    \item $\tL_{\rho_i}(\gamma)$ is loxodromic for all $\gamma\in\Gamma$,
    \item $\|\tM(\rho_1(\gamma))\|_\tR=\|\tM(\rho_2(\gamma))\|_\tR$ for all $\gamma \in \sB_\sS(m)$,
\end{enumerate}
are either Zariski dense inside some conjugate of $\sG$ or there exists $(\tA,X)\in\sGL(\sV)\ltimes\sV$ such that we have $(\tA,X)^{-1}\rho_1(\tA,X)=\rho_2$.
\end{theorem}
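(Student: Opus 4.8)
The plan is to follow the template of the proof of Theorem~\ref{thm3}, but first to split into two cases according to whether the images $\rho_1(\Gamma)$, $\rho_2(\Gamma)$ are Zariski dense in $\sH=\sG\ltimes_\tR\sV$, using the variety $\sZ_{\|\cR\|^2}$ to connect the two alternatives. First I would observe that if $\rho_i(\Gamma)$ is not Zariski dense in $\sH$, then — since $\tL_{\rho_i}(\Gamma)$ is Zariski dense in $\sG$ — its Zariski closure is a proper closed subgroup of $\sH$ surjecting onto $\sG$, whose intersection with $\sV$ is a $\tR(\sG)$-invariant subspace, hence by irreducibility of $\tR$ either $\sV$ (impossible, as then the closure would be all of $\sH$) or $\{0\}$; in the latter case the closure is a section of $\sH\to\sG$, hence by Whitehead's Lemma \cite{Raghu} a conjugate of $\sG$ in $\sH$. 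A representation conjugate into $\sG$ has identically vanishing Margulis--Smilga invariant — for split $\sG$ a loxodromic element has trivial unipotent part and its elliptic part is conjugate to an element of $\sM$, which acts trivially on $\sV^0$, so $\tM(g,(\tI-\tR_g)b)=0$ — so $\|\tM(\rho_i(\gamma))\|_\tR=0$ for all $\gamma$ and hence, by hypothesis, $\|\tM(\rho_j(\gamma))\|_\tR=0$ for all $\gamma\in\sB_\sS(m)$, i.e. $\rho_j(\gamma)\in\sZ_{\|\cR\|^2}$. Since the invariant symmetric bilinear form restricts to a nondegenerate form on $\sV^0$, some loxodromic $g$ and some $X$ satisfy $\|\tM(g,X)\|_\tR\neq0$, so $\sH_\C\cap\sZ_{\|\cR\|^2}$ is a proper subvariety of $\sH_\C$; as $m\geq M_{\sH_\C\cap\sZ_{\|\cR\|^2}}$, Proposition~\ref{prop.main} then forbids $\rho_j(\Gamma)$ from being Zariski dense in $\sH$, so it too is Zariski dense in a conjugate of $\sG$ — the first alternative.

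In the remaining case both images are Zariski dense in $\sH$. I would let $\sX$ be the Zariski closure of $(\rho_1,\rho_2)(\Gamma)$ in $\sH\times\sH$; by Proposition~\ref{prop.affinite} it is the graph of a smooth isomorphism $\sigma\colon\sH/\sN_1\to\sH/\sN_2$ for normal subgroups $\sN_1,\sN_2$ of $\sH$, and $\sX$ belongs to the family $\cS$, so $M_{\aleph(\sX_\C)\cap\sZ_\cE}\leq m$. The computation preceding the theorem gives, via Remark~\ref{rem.poly} and the $\sG$-invariance of the form,
\[
\cE(\aleph(g,X,h,Y))=\bigl(\tP_{\tR_g}(0)\,\tP_{\tR_h}(0)\bigr)^2\bigl(\|\tM(h,Y)\|_\tR^2-\|\tM(g,X)\|_\tR^2\bigr)
\]
for loxodromic $g,h$, so the hypothesis $\|\tM(\rho_1(\gamma))\|_\tR=\|\tM(\rho_2(\gamma))\|_\tR$ puts $\aleph(\rho_1(\gamma),\rho_2(\gamma))\in\sZ_\cE$ for all $\gamma\in\sB_\sS(m)$. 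These points lie in $\aleph(\sX_\C)$ and $m\geq M_{\aleph(\sX_\C)\cap\sZ_\cE}$, so Proposition~\ref{prop.main} forces $\aleph(\sX_\C)\cap\sZ_\cE$ not to be a proper subvariety of $\aleph(\sX_\C)$; thus $\aleph(\sX_\C)\subset\sZ_\cE$ and $\cE$ vanishes identically on $\sX$.

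Next I would show $\sN_1=\sN_2=\{e\}$. By Proposition A.2 of \cite{Ghosh5}, each $\sN_i$ is trivial or of the form $\sG_i\ltimes_\tR\sV$ (inside the relevant factor), and in particular contains a copy of $\sV$; so if, say, $\sN_1$ is nontrivial, then choosing $\gamma$ with $(\rho_1(\gamma),\rho_2(\gamma))=(g_1,X_1,g_2,X_2)$ — where $g_1,g_2$ are loxodromic by hypothesis — one finds $(g_1,X_1,g_2,Y)\in\sX$ for every $Y\in\sV$, whence the displayed identity (with $\tP_{\tR_{g_1}}(0),\tP_{\tR_{g_2}}(0)$ nonzero scalars) yields $\|\tM(g_2,Y)\|_\tR^2=\|\tM(g_1,X_1)\|_\tR^2$ for all $Y$; but $Y\mapsto\tM(g_2,Y)$ surjects onto $\sV^0$, so $\|\cdot\|_\tR^2$ would be constant on $\sV^0$, contradicting nondegeneracy of the form on $\sV^0$. (If $\sV^0=\{0\}$ then $\tM\equiv0$ and the statement is empty; this does not occur for the representations arising in Smilga's constructions.) The symmetric argument rules out a nontrivial $\sN_2$. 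Hence $\sX$ is the graph of a smooth automorphism $\sigma$ of $\sH$ and $\rho_2=\sigma\circ\rho_1$. As $\sV$ is the unipotent radical of $\sH$, $\sigma(\sV)=\sV$, so $\tB:=\sigma|_\sV\in\sGL(\sV)$; $\sigma$ descends to $\bar\sigma\in\mathsf{Aut}(\sG)$ with $\tB\,\tR_g\,\tB^{-1}=\tR_{\bar\sigma(g)}$ and $\sigma(g,0)=(\bar\sigma(g),c(g))$, where $c\colon\sG\to\sV$ is a smooth cocycle such that $\tB^{-1}c$ is a cocycle for the $\tR$-action; by Whitehead's Lemma \cite{Raghu}, $\tB^{-1}c(g)=\tR_g w-w$ for some $w\in\sV$. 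A direct computation then gives $(\tB^{-1},w)^{-1}\rho_1(\gamma)(\tB^{-1},w)=\sigma(\rho_1(\gamma))=\rho_2(\gamma)$ for all $\gamma$, so the second alternative holds with $(\tA,X):=(\tB^{-1},w)$.

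I expect the main obstacle to be running these variety arguments over $\R$ rather than $\C$: $\sZ_\cE$ and $\sZ_{\|\cR\|^2}$ only record the vanishing of a possibly indefinite quadratic form, so both the elimination of nontrivial $\sN_i$ and the properness of $\sH_\C\cap\sZ_{\|\cR\|^2}$ have to be argued at real loxodromic points, where $\|\cdot\|_\tR^2$ restricts to a genuine nonzero quadratic form on $\sV^0$; the hypothesis that every $\tL_{\rho_i}(\gamma)$ is loxodromic, combined with Zariski density, is exactly what provides a sufficient supply of such points. The subsidiary facts — that $\tP_g(0)\neq0$ for loxodromic $g$ in a split group, the coboundary computation used in the first paragraph, and the identity of Remark~\ref{rem.poly} — are routine given \cite{Ghosh5}.
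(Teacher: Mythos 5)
Your proof is correct, and it reaches the same conclusions via a genuinely somewhat different route. The overall architecture is the same — split into the ``both images lie in a conjugate of $\sG$'' case and the ``both images Zariski dense in $\sH$'' case, use Proposition~\ref{prop.main} to pass from the finitely many equalities to Zariski-density statements, and finish with Goursat-type and Whitehead-type arguments — but you make three distinct choices. First, the paper splits on whether $\|\tM(\rho_1(\gamma))\|_\tR$ vanishes for all $\gamma\in\sB_\sS(m)$, whereas you split on whether the $\rho_i(\Gamma)$ are Zariski dense in $\sH$; the two dichotomies are equivalent, but the paper's version lets it invoke Corollary~7.3 of \cite{Ghosh5} to pass to density, while yours requires the converse direction, namely that a representation valued in a conjugate of $\sG$ has identically vanishing Margulis--Smilga invariant, for which you must use that $\sM$ acts trivially on $\sV^0$. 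Second, to kill the normal subgroups $\sN_1,\sN_2$, the paper cites Lemma~9.2 of \cite{Ghosh5} (to show $\aleph(\sX)\not\subset\sZ_\cE$ if both are nontrivial) followed by a dimension count, whereas you argue directly from the already-established inclusion $\aleph(\sX_\C)\subset\sZ_\cE$: if $\sN_1$ contains $\{(e,0)\}\times\sV$ (which Proposition~A.2 of \cite{Ghosh5} forces once it is nontrivial), then fixing a loxodromic pair and varying $Y\in\sV$ makes $\|\tM(g_2,Y)\|^2_\tR$ constant via the explicit identity for $\cE$, contradicting nondegeneracy of the form on $\sV^0$. This is a cleaner, more self-contained elimination that also avoids the paper's implicit intermediate step that only one $\sN_i$ vanishes before the dimension count. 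Third, to finish, the paper invokes Proposition~8.5 of \cite{Ghosh5}, whereas you construct $(\tA,X)=(\tB^{-1},w)$ explicitly from the automorphism $\sigma$ of $\sH$, using that $\sigma$ preserves the unipotent radical $\sV$ and trivializing the resulting cocycle via Whitehead; your direct computation checks out. The one small gap worth flagging is the assertion that $\sM$ acts trivially on $\sV^0$ — you state it without proof; it is indeed a standard fact (implicit already in the well-definedness of $\tM$ and used silently in the paper when it asserts that $\sH_\C\cap\sZ_{\|\cR\|^2}$ is proper), but since your Case 1 logically depends on it where the paper's does not, you should at least cite it. You also correctly note the degenerate case $\sV^0=\{0\}$, though there the theorem's hypotheses are satisfied with trivial content rather than being vacuous.
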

\begin{proof}
We will prove this result in two parts.

$\diamond$ Let $\|\tM(\rho_1(\gamma))\|_\tR=0$ for all $\gamma\in\sB_\sS(m)$. We observe that $\sX_1$, the Zariski closure of $\rho_1(\Gamma)$ inside $\sH$, is a proper subvariety of $\sH$. Indeed, if it is not then $\sX_1=\sH$ and hence $\sH_\C\cap\sZ_{\|\cR\|^2}$ is a proper subvariety of $\sH_\C=(\sX_1)_\C$. As $M_{\sH_\C\cap\sZ_{\|\cR\|^2}}\leq m$, we use Proposition \ref{prop.main} and obtain a contradiction. 

Furthermore, as $\tL_{\rho_1}(\Gamma)$ is Zariski dense inside $\sG$, we deduce that $\left.\tL\right|_{\sX_1}$ is surjective. We also notice that $\ker(\left.\tL\right|_{\sX_1})\subset\{\tI\}\ltimes\sV$. Moreover, for all $(\tA,X)\in\sH$ we have
\[(\tA,X)(\tI,V)(\tA,X)^{-1}=(\tI,\tA V).\]
Hence, we deduce that $\ker(\left.\tL\right|_{\sX_1})$ is a normal subgroup of $\sH$. Therefore, using Proposition A.2 of \cite{Ghosh5} and the fact that $\sX_1$ is a proper subvariety of $\sH$, we deduce that $\ker(\left.\tL\right|_{\sX_1})$ is trivial. Hence, $\left.\tL\right|_{\sX_1}$ is an isomorphism between $\sX_1$ and $\sG$. The inverse of this isomorphism induces a smooth map $X:\sG\to\sV$ such that $X_{gh}=\tR_gX_h+X_g$ for all $g,h\in\sG$ and $\sX_1=\{(g,X_g)\mid g\in\sG\}$. Now using Whitehead's Lemma (see page 13 of \cite{Raghu}), we deduce that there exists $X\in\sV$ such that $\sX_1=(\tI,X)(\sG,0)(\tI,X)^{-1}$. Moreover, as $\|\tM(\rho_1(\gamma))\|_\tR=0$ for all $\gamma\in\sB_\sS(m)$, we obtain that $\|\tM(\rho_2(\gamma))\|_\tR=0$ for all $\gamma\in\sB_\sS(m)$. Hence, using a similar argument we deduce that $\sX_2$, the Zariski closure of $\rho_2(\Gamma)$ inside $\sH$, is conjugate to $\sG$.

$\diamond$ Let $\|\tM(\rho_1(\gamma))\|_\tR\neq0$ for some $\gamma\in\sB_\sS(m)$. It follows that $\|\tM(\rho_2(\gamma))\|_\tR\neq0$ for some $\gamma\in\sB_\sS(m)$. Hence, using Corollary 7.3 of \cite{Ghosh5} we obtain that both $\rho_1(\Gamma)$ and $\rho_2(\Gamma)$ are Zariski dense inside $\sG\ltimes_\tR\sV$. Let $\sX$ be the Zariski closure of $(\rho_1,\rho_2)(\Gamma)$ inside $(\sG\ltimes_\tR\sV)\times(\sG\ltimes_\tR\sV)$. Then by Proposition \ref{prop.affinite} there exist two normal subgroups $\sN_1,\sN_2$ of $\sG\ltimes_\tR\sV$ and a smooth isomorphism $\sigma:(\sG\ltimes_\tR\sV)/\sN_1\to (\sG\ltimes_\tR\sV)/\sN_2$ such that
\[\sX=\{(g,X,h,Y)\mid (h,Y)\sN_2=\sigma((g,X)\sN_1)\}.\]
We claim that either $\sN_1$ is trivial or $\sN_2$ is trivial. Indeed, if both $\sN_1$ and $\sN_2$ are non trivial, then using Lemma 9.2 of \cite{Ghosh5} we obtain that $\aleph(\sX)\setminus\sZ_\cE\neq\emptyset$. Hence, $\aleph(\sX_\C)\cap\sZ_\cE\neq\aleph(\sX_\C)$ i.e. $\aleph(\sX_\C)\cap\sZ_\cE$ is a proper subvariety of $\aleph(\sX_\C)$. Moreover, from our hypothesis it follows that $(\rho_1,\rho_2)(\sB_\sS(m))\subset\aleph(\sX_\C)\cap\sZ_\cE$. We also have that $M_{\aleph(\sX_\C)\cap\sZ_\cE}\leq m$ and we obtain a contradiction by using Proposition \ref{prop.main}. Therefore, we deduce that either $\sN_1$ is trivial or $\sN_2$ is trivial. Furthermore, we have
\begin{align*}
    \dim(\sN_1)&=\dim(\sG\ltimes_\tR\sV)-\dim((\sG\ltimes_\tR\sV)/\sN_1)\\
    &=\dim(\sG\ltimes_\tR\sV)-\dim(\sigma((\sG\ltimes_\tR\sV)/\sN_1))\\
    &=\dim(\sG\ltimes_\tR\sV)-\dim((\sG\ltimes_\tR\sV)/\sN_2)=\dim(\sN_2).
\end{align*}
As $\sG$ is semisimple without compact factors and with trivial center, we conclude that both $\sN_1$ and $\sN_2$ are trivial. Hence, $\sigma$ is a smooth automorphism of $\sG\ltimes_\tR\sV$ and we have
\[\sX=\{(g,X,h,Y)\mid(h,Y)=\sigma(g,X)\}.\]
Therefore, it follows that $\rho_2=\sigma\circ\rho_1$. Finally, we finish our proof by using Proposition 8.5 of \cite{Ghosh5}.
\end{proof}

%%%%%%%%%%%%%%%%%%%%%%%%%%%%%%%%%%%%%%%%%%%%%%%%%%%%%%%%%%%%%%%%%%%%%%%%%%%%%%%%%%%%%%%%%
\section{Applications}
%%%%%%%%%%%%%%%%%%%%%%%%%%%%%%%%%%%%%%%%%%%%%%%%%%%%%%%%%%%%%%%%%%%%%%%%%%%%%%%%%%%%%%%%%

In this section, we apply the results obtained in Section \ref{sec.fsr} and prove our main theorems about Anosov representations in split semisimple Lie groups with respect to minimal parabolic subgroups, Hitchin representations and Margulis-Smilga spacetimes.

%%%%%%%%%%%%%%%%%%%%%%%%%%%%%%%%%%%%%%%%%%%%%%%%%%%%%%%%%%%%%%%%%%%%%%%%%%%%%%%%%%%%%%%%%
\subsection{Anosov representations}
%%%%%%%%%%%%%%%%%%%%%%%%%%%%%%%%%%%%%%%%%%%%%%%%%%%%%%%%%%%%%%%%%%%%%%%%%%%%%%%%%%%%%%%%%

In this subsection we define the notion of an Anosov representation and show that for Anosov representations with respect to minimal parabolic subgroups inside split semisimple Lie groups, finite step rigidity hold for Jordan projection spectra. Anosov representations in $\sSL(n,\R)$ were introduced by Labourie in \cite{Labourie}. Later, in \cite{GW2}, Guichard--Wienhard extended the above definition and studied Anosov representations into real semisimple Lie groups. In recent years, important alternate characterizations of Anosov representations has been obtained in the works of Kapovich--Leeb--Porti \cite{KLPmain}, Bochi--Potrie--Sambarino \cite{BPS} and Potrie--Kassel \cite{KaPo}.

Let $\Gamma$ be a finitely generated word hyperbolic group and let $\bdry$ be its Gromov boundary. We note that $\Gamma$ has a natural action on $\bdry$. We consider
\[\bdry^{(2)}:=\{(x,y)\mid x,y\in\bdry, x\neq y\}\]
and denote $(\bdry^{(2)}\times\R)$ by $\cflow$. Moreover, for all $p_\pm\in\bdry$ and $p_0,t\in\R$, we define
\begin{align*}
    \phi_t:\cflow&\rightarrow\cflow\\
    p:=(p_-,p_+,p_0)&\mapsto (p_-,p_+,p_0+t).
\end{align*}
We recall a result of Gromov \cite{Gromov} to note that there exists a proper cocompact action of $\Gamma$ on $\cflow$ such that
\begin{enumerate}
    \item $\phi_t(\gamma p)=\gamma(\phi_tp)$ for all $t\in\R$ and $p\in\cflow$,
    \item for all $(p_-,p_+)\in\bdry^{(2)}$ and $\gamma\in\Gamma$ there exists $s_\gamma(p_-,p_+)\in\R$ such that $\gamma(p_-,p_+,0)=(\gamma p_-,\gamma p_+, s_\gamma(p_-,p_+)$. 
\end{enumerate}
Moreover, there exists a metric $d$ on $\cflow$, which is unique only up to H\"older equivalence, such that the action of $\Gamma$ is isometric with respect to $d$, the action of the flow is Lipschitz and every orbit of $\phi$ gives a quasi-isometric embedding. We denote the quotient of $\cflow$ under the action of $\Gamma$ by $\flow$. We abuse notation and denote the flow on $\flow$, coming from the flow $\phi$ on $\cflow$, by $\phi$.

Let $\sG$ be a real semisimple Lie group, let $\sP_\pm$ be a pair of opposite parabolic subgroups of $\sG$ and let
\[\cX:=\{(g\sP_-,g\sP_+)\mid g\in\sG\}\subset \sG/\sP_-\times\sG/\sP_+.\]
We note that $\cX$ is open inside $\sG/\sP_-\times\sG/\sP_+$ and it can be identified with $\sG/(\sP_-\cap\sP_+)$. Hence, we have
\[\sT_{(g\sP_-,g\sP_+)}\cX=\sT_{g\sP_-}\sG/\sP_-\oplus\sT_{g\sP_+}\sG/\sP_+.\]
\begin{definition}\label{def.anosov}
Let $\rho:\Gamma\to\sG$ be an injective homomorphism. Then $\rho$ is called Anosov with respect to $\sP_\pm$ if and only if
\begin{enumerate}
    \item there exist continuous, injective maps $\xi^\pm:\bdry\to\sG/\sP_\pm$ such that for all $\gamma\in\Gamma$, $x\in\bdry$ we have $\xi^\pm(\gamma x)=\rho(\gamma)\xi^\pm(x)$ and for all $p\in\cflow$ we have
    \[\xi(p):=(\xi^-(p_-),\xi^+(p_+))\in\cX,\]
    \item there exist positive constants $c,C$ and a continuous collection of Euclidean metrics
    \[\left\{\|\cdot\|_p:\sT_{\xi(p)}\cX\to\R\mid p\in\cflow\right\}\] such that for all $\gamma\in\Gamma$, $v\in\sT_{\xi(p)}\cX$ we have $\|\rho(\gamma)v\|_{\gamma p}=\|v\|_p$ and for all $v^\pm\in\sT_{\xi^\pm(p_\pm)}\sG/\sP_\pm$, $t\geq0$ we have
    \[\|v^\pm\|_{\phi_{\pm t}p}\leq C\exp(-ct)\|v^\pm\|_p.\]
\end{enumerate}
\end{definition}
\begin{theorem}\label{thm.ano}
Let $\sG$ be a connected real split semisimple algebraic Lie group with trivial center and without compact factors. Let $\Gamma$ be a finitely generated word hyperbolic group with a finite symmetric generating set $\sS \subset \Gamma$ and let $\cS$ be as in Proposition \ref{prop.linfinite}. Then, for $m=\max\{M_{\aleph(\sY_\C)\cap\sZ_\cP}\mid\sY\in\cS\}$ and two representations $\rho_1,\rho_2: \Gamma \to \sG$ which satisfy the following:
\begin{enumerate}
    \item $\rho_i(\Gamma)$ is Zariski dense in $\sG$,
    \item $\rho_i(\Gamma)$ is Anosov with respect to minimal parabolic subgroups,
    \item $\lambda(\rho_1(\gamma))=\lambda(\rho_2(\gamma))$ for all $\gamma \in \sB_\sS(m)$,
\end{enumerate}
are equivalent; in other words, there exists an automorphism $\sigma:\sG \to \sG$ such that we have $\sigma \circ \rho_1=\rho_2$.
\end{theorem}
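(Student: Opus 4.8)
The plan is to deduce the statement from Theorem \ref{thm2}. Hypotheses (1) and (3) here are verbatim hypotheses (1) and (3) of Theorem \ref{thm2}; the collection $\cS$ and the constant $m=\max\{M_{\aleph(\sY_\C)\cap\sZ_\cP}\mid\sY\in\cS\}$ are the same (both depend only on $\sG$, by Proposition \ref{prop.linfinite}); and $\sG$ is again split semisimple with trivial center and without compact factors. Hence the only thing missing is hypothesis (2) of Theorem \ref{thm2}, namely that $\rho_i(\gamma)$ is loxodromic for every $\gamma\in\Gamma$ with $\rho_i(\gamma)\neq e$ — the identity is harmless, as $\aleph(e,e)\in\sZ_\cP$ trivially. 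Granting that, Theorem \ref{thm2} applies and yields the automorphism $\sigma$.

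The core of the argument is therefore the claim that an Anosov representation with respect to minimal parabolic subgroups maps every infinite order element of $\Gamma$ to a loxodromic element of $\sG$. I would establish this as follows. Let $\gamma\in\Gamma$ have infinite order; since $\Gamma$ is word hyperbolic, $\gamma$ acts on $\bdry$ with north--south dynamics and fixed points $\gamma^\pm$. By Definition \ref{def.anosov}(1) the equivariant limit maps produce $\rho_i(\gamma)$-fixed flags $\xi^+(\gamma^+)\in\sG/\sP_+$ and $\xi^-(\gamma^-)\in\sG/\sP_-$ with $(\xi^-(\gamma^-),\xi^+(\gamma^+))\in\cX$, i.e. transverse. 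Feeding the periodic $\phi$-orbit through $(\gamma^-,\gamma^+,0)$ into the contraction estimate of Definition \ref{def.anosov}(2) shows that $\rho_i(\gamma)$ strictly contracts $\sT_{\xi^+(\gamma^+)}\sG/\sP_+$ and strictly expands $\sT_{\xi^-(\gamma^-)}\sG/\sP_-$, so $\rho_i(\gamma)$ is proximal with respect to $\sP_\pm$. As $\sP_\pm$ are minimal, this gives $\alpha(\lambda(\rho_i(\gamma)))>0$ for every simple restricted root $\alpha$, i.e. $\lambda(\rho_i(\gamma))\in\fa^{++}$ — exactly loxodromicity. (More briefly: by the Anosov characterizations of \cite{Labourie,GW2,KLPmain,BPS,KaPo}, being Anosov with respect to minimal parabolics amounts to a uniform linear gap between all consecutive coordinates of the Cartan projection; passing to powers, where $\kappa(\rho_i(\gamma^n))/n\to\lambda(\rho_i(\gamma))$, and using the positive stable translation length of an infinite order element, one again gets $\alpha(\lambda(\rho_i(\gamma)))>0$ for all simple $\alpha$.)

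This handles all $\gamma$ of infinite order; the finite order elements need a separate remark, since then $\rho_i(\gamma)$ is elliptic and $\ttr(\rho_1(\gamma)^2)=\ttr(\rho_2(\gamma)^2)$ need not hold. If $\Gamma$ is torsion free — the case of the applications of interest, notably surface groups and hence Hitchin representations — this does not arise and Theorem \ref{thm2} applies directly. For general word hyperbolic $\Gamma$ I would use Selberg's Lemma (see \cite{Raghu}) to pass to a finite index normal torsion free subgroup $\Gamma_0\trianglelefteq\Gamma$, obtain $\sigma$ with $\sigma\circ\rho_1=\rho_2$ on $\Gamma_0$ from the torsion free case (both restrictions are still Anosov and, $\sG$ being connected, still Zariski dense), and then extend: for $\gamma\in\Gamma$ and $\gamma_0\in\Gamma_0$, normality gives $\rho_2(\gamma)\rho_2(\gamma_0)\rho_2(\gamma)^{-1}=\sigma\rho_1(\gamma)\,\rho_2(\gamma_0)\,\sigma\rho_1(\gamma)^{-1}$, so $\rho_2(\gamma)$ and $\sigma\rho_1(\gamma)$ act by the same conjugation on the Zariski dense subgroup $\rho_2(\Gamma_0)$, hence on $\sG$, and triviality of the center forces $\rho_2(\gamma)=\sigma\rho_1(\gamma)$. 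I will not keep track of the (slightly enlarged) word radius that this reduction costs.

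The main obstacle is the second paragraph: everything rests on the fact that the Anosov hypothesis relative to \emph{minimal} parabolics is strong enough to push every infinite order element into the open Weyl chamber $\fa^{++}$, upgrading ``proximal with respect to each maximal parabolic $\sP_i$'' to genuine loxodromicity. This is exactly where the transversality of the limit maps and the minimality of $\sP_\pm$ — equivalently, the gap characterizations — are used. The reduction to Theorem \ref{thm2} and the torsion bookkeeping are routine.
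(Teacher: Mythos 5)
Your reduction to Theorem \ref{thm2} via loxodromicity of the images of infinite-order elements is exactly the route the paper takes: its one-line proof invokes Corollary 6.6 of \cite{KaPo}, which supplies the loxodromicity, together with Theorem \ref{thm2}. Your second paragraph correctly unfolds the content of that citation: north--south dynamics on $\bdry$, transverse $\rho_i(\gamma)$-fixed flags, and the uniform contraction of Definition \ref{def.anosov}(2) along the periodic orbit yield $\alpha(\lambda(\rho_i(\gamma)))>0$ for every simple restricted root $\alpha$, hence $\lambda(\rho_i(\gamma))\in\fa^{++}$, for every infinite-order $\gamma$. Your alternative phrasing via the eigenvalue-gap characterization and $\kappa(\rho_i(\gamma^n))/n\to\lambda(\rho_i(\gamma))$ is an equally valid way to land the same conclusion.

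Your third paragraph raises a point that the paper's proof, as written, does not address. If $\Gamma$ has torsion, a finite-order $\gamma$ maps under a faithful $\rho_i$ to an elliptic element; then $\lambda(\rho_i(\gamma))=0$ carries no information and the inclusion $\aleph(\rho_1(\gamma),\rho_2(\gamma))\in\sZ_\cP$, which the proof of Theorem \ref{thm2} extracts from Lemma \ref{lem.eqjor}, is not available at such $\gamma$. Your Selberg-lemma workaround is sound in outline: $\Gamma$ is linear via the faithful $\rho_1$, hence virtually torsion-free; both restrictions to a finite-index torsion-free normal $\Gamma_0$ remain Anosov and remain Zariski dense since $\sG$ is connected; and your extension of $\sigma$ from $\Gamma_0$ to $\Gamma$ using normality, Zariski density of $\rho_2(\Gamma_0)$, and triviality of the center is correct. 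However, as you acknowledge, this reduction does not produce the theorem with the advertised constant $m=\max\{M_{\aleph(\sY_\C)\cap\sZ_\cP}\mid\sY\in\cS\}$, since controlling a ball in $\Gamma_0$ requires controlling a strictly larger ball in $\Gamma$. So for word hyperbolic $\Gamma$ with torsion you prove a version with an enlarged constant, not the stated one; for torsion-free $\Gamma$ --- which is all the paper's subsequent applications (Hitchin representations of closed surface groups, free groups for Margulis--Smilga spacetimes) actually use --- your first two paragraphs already close the argument with the stated constant, matching the paper.
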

\begin{proof}
We use Corollary 6.6 of \cite{KaPo} and Theorem \ref{thm2} to conclude our result.
\end{proof}

%%%%%%%%%%%%%%%%%%%%%%%%%%%%%%%%%%%%%%%%%%%%%%%%%%%%%%%%%%%%%%%%%%%%%%%%%%%%%%%%%%%%%%%%%
\subsection{Hitchin representations}
%%%%%%%%%%%%%%%%%%%%%%%%%%%%%%%%%%%%%%%%%%%%%%%%%%%%%%%%%%%%%%%%%%%%%%%%%%%%%%%%%%%%%%%%%

In this subsection we define the notion of a Hitchin representation and show that finite step rigidity of Jordan-Lyapunov spectra hold for these representations too.

\begin{definition}
Let $\Gamma$ be the fundamental group of a compact surface, let $\rho_0:\Gamma\to\sPSL(2,\R)$ be an injective homomorphism and let $\tF_n:\sPSL(2,\R)\to\sPSL(n,\R)$ be the irreducible representation. Then $\tF_n\circ\rho_0$ is called an $n$-Fuchsian representation.
\end{definition}
\begin{definition}
Let $\Gamma$ be the fundamental group of a compact surface and let $\sHom_{\sR}(\Gamma,\sPSL(n,\R))$ denote the space of reducible injective homomorphisms of $\Gamma$ inside $\sPSL(n,\R)$. Then connected components of the representation space $\sHom_{\sR}(\Gamma,\sPSL(n,\R))/\sPSL(n,\R)$ which contain $n$-Fuchsian representations are called Hitchin components. Any representation in a Hitchin component is called a Hitchin representation.
\end{definition}
\begin{remark}\label{rem.zclo}
Let $\Gamma$ be the fundamental group of a compact surface. We denote the space of Hitchin representations of $\Gamma$ in $\sPSL(n,\R)$ by $\cH_n(\Gamma)$. We note that by Lemma 10.1 of \cite{Labourie} all Hitchin representations are irreducible. Moreover, for $\rho_i\in\cH_n(\Gamma)$ we denote the Zariski closure of $\rho_i(\Gamma)$ inside $\sPSL(n,\R)$ by $\sG_i$ and the Zariski closure of $(\rho_i,\rho_j)(\Gamma)$ inside $\sPSL(n,\R)\times\sPSL(n,\R)$ by $\sG(\rho_i,\rho_j)$.
\end{remark}

\begin{theorem}\label{thm.hit1}
Let $\Gamma$ be the fundamental group of a compact surface, let $\sS\subset\Gamma$ be a finite symmetric generating set and let $\cS$ be as in Proposition \ref{prop.linfinite} for $\sG=\sPSL(n,\R)$. Then, for $m=\max\{M_{\aleph(\sY_\C)\cap\sZ_\cP}\mid\sY\in\cS\}$ and two representations $\rho_1,\rho_2\in\cH_n(\Gamma)$ which satisfy the following:
\begin{enumerate}
    \item $\rho_i(\Gamma)$ is Zariski dense in $\sPSL(n,\R)$,
    \item $\lambda(\rho_1(\gamma))=\lambda(\rho_2(\gamma))$ for all $\gamma \in \sB_\sS(m)$,
\end{enumerate}
are equivalent; in other words, there exists an automorphism $\sigma:\sG \to \sG$ such that we have $\sigma \circ \rho_1=\rho_2$.
\end{theorem}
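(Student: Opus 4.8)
The plan is to reduce Theorem \ref{thm.hit1} to the already-established finite step rigidity for Jordan-Lyapunov spectra of Anosov representations, i.e. to Theorem \ref{thm.ano} (equivalently Theorem \ref{thm2}). First I would observe that $\sG=\sPSL(n,\R)$ is a connected real split semisimple algebraic Lie group with trivial center and without compact factors, so the standing hypotheses of Section \ref{sec.fsr} are in force for this $\sG$; in particular the collection $\cS$ of Proposition \ref{prop.linfinite} depends only on $\sPSL(n,\R)$, and consequently the integer $m=\max\{M_{\aleph(\sY_\C)\cap\sZ_\cP}\mid\sY\in\cS\}$ appearing in the statement is literally the constant furnished by Theorem \ref{thm.ano} for $\sG=\sPSL(n,\R)$.

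Next I would invoke the geometric characterization of Hitchin representations: by Labourie \cite{Labourie} (together with Guichard \cite{Gui1}), every $\rho\in\cH_n(\Gamma)$ is Anosov with respect to a minimal parabolic (Borel) subgroup of $\sPSL(n,\R)$. This verifies hypothesis (2) of Theorem \ref{thm.ano}. Hypothesis (1), Zariski density, is exactly hypothesis (1) of Theorem \ref{thm.hit1}; and hypothesis (3), agreement of the Jordan-Lyapunov projections on $\sB_\sS(m)$, is hypothesis (2) of Theorem \ref{thm.hit1}. With all three hypotheses of Theorem \ref{thm.ano} verified, that theorem directly yields an automorphism $\sigma\colon\sG\to\sG$ with $\sigma\circ\rho_1=\rho_2$, which is precisely the assertion. (If one prefers to appeal to Theorem \ref{thm2} rather than Theorem \ref{thm.ano}, the one extra point to check is that a representation Anosov with respect to a minimal parabolic is purely loxodromic on infinite-order elements, i.e. the Jordan-Lyapunov projection of $\rho_i(\gamma)$ lies in $\fa^{++}$ for every $\gamma\neq e$; this together with torsion-freeness of the surface group $\Gamma$ gives condition (2) of Theorem \ref{thm2}.)

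\textbf{Main obstacle.} There is essentially no analytic or combinatorial content left here beyond what Theorems \ref{thm2} and \ref{thm.ano} already supply; the only thing requiring genuine care is bookkeeping of the constant. One must confirm that the value of $m$ in Theorem \ref{thm.hit1} truly coincides with the constant produced in the proof of Theorem \ref{thm.ano} — this is where the fact that $\cS$ in Proposition \ref{prop.linfinite} is a function of $\sG$ alone (and not of $\Gamma$, of the genus, or of the particular Hitchin representations chosen) is used — so that no representation-dependent data enters the bound beyond $n$. Once this identification is made, the proof is the single sentence "apply Theorem \ref{thm.ano}''.
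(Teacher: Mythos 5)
Your proof is correct and is essentially the same as the paper's. The paper's proof is a one-liner citing Labourie's Theorem 1.5 (to verify that Hitchin representations have loxodromic images, hence hypothesis (2) of Theorem \ref{thm2}) and then applying Theorem \ref{thm2} directly; you route through Theorem \ref{thm.ano}, which is itself just Theorem \ref{thm2} combined with the loxodromicity of Anosov representations (via Corollary 6.6 of Kassel--Potrie), so the content is identical and the constant $m$ matches in both cases.
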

\begin{proof}
Our result follows from Theorem 1.5 of \cite{Labourie} and Theorem \ref{thm2}.
\end{proof}

In fact, one can drop the Zariski density assumption from Theorem \ref{thm.hit1}. We will prove it below after the following result:
\begin{proposition}
Let $\Gamma$ be the fundamental group of a compact surface. Then 
$\max\{M_{\aleph(\sG(\rho,\eta)_\C)\cap\sZ_\cP}\mid \rho,\eta\in\cH_n(\Gamma)\}$ exists and is finite.
\end{proposition}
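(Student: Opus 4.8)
The plan is to reduce the claimed finiteness to a statement about a \emph{finite} list of possible Zariski closures $\sG(\rho,\eta)$, and then apply the finiteness part of Proposition \ref{prop.linfinite} (its second item). First I would recall, as noted in Remark \ref{rem.zclo}, that every Hitchin representation is irreducible. I would then argue that the Zariski closure $\sG_i$ of $\rho_i(\Gamma)$ inside $\sPSL(n,\R)$ lies in a \emph{finite} list of possibilities: by Labourie's characterization (Theorem 1.5 of \cite{Labourie}), a Hitchin representation is Anosov with respect to minimal parabolics, so $\sG_i$ is a connected semisimple real algebraic subgroup acting irreducibly on $\R^n$; up to conjugacy there are only finitely many such subgroups of $\sPSL(n,\R)$ (this is a classical fact from the theory of algebraic subgroups acting irreducibly — see e.g. the references in the excerpt's discussion around \cite{Kazar, DeKe}, or directly the finiteness of closed reductive subgroups up to conjugacy). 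Crucially, since $M_\sX$ depends only on $\irr(\sX)$, $\mdeg(\sX)$ and $d(\sX)$, and all of these are conjugation invariant (invertible linear transformations preserve them, exactly as used in the proof of Proposition \ref{prop.linfinite}), the quantity $M_{\aleph(\sG(\rho,\eta)_\C)\cap\sZ_\cP}$ depends only on the \emph{conjugacy classes} of $\sG_\rho$, $\sG_\eta$ and the group $\sG(\rho,\eta)$.

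Next I would control $\sG(\rho,\eta)$ itself. Fixing the conjugacy classes of $\sG_\rho$ and $\sG_\eta$, the group $\sG(\rho,\eta)\subset\sPSL(n,\R)\times\sPSL(n,\R)$ is a subgroup projecting onto $\sG_\rho$ in the first factor and onto $\sG_\eta$ in the second; by Goursat's Lemma it is the fiber product over a common quotient via an isomorphism between quotients of $\sG_\rho$ and $\sG_\eta$. Since $\sG_\rho$ and $\sG_\eta$ are connected semisimple with finitely many normal subgroups (Theorem 4.3 of \cite{Milne}) and only finitely many isomorphisms between the relevant quotients up to the action on Lie algebras (as in the proof of Proposition \ref{prop.linfinite}, using $\mathsf{Isom}(\sG_1,\sG_2)\subset\sHom(\fg_1,\fg_2)$), there are only finitely many possibilities for $\sG(\rho,\eta)$ up to conjugacy. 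Combining this with the previous paragraph, as $\rho,\eta$ range over $\cH_n(\Gamma)$ the varieties $\aleph(\sG(\rho,\eta)_\C)\cap\sZ_\cP$ fall into finitely many classes each having a fixed triple $(\irr,\mdeg,d)$, hence the set $\{M_{\aleph(\sG(\rho,\eta)_\C)\cap\sZ_\cP}\mid\rho,\eta\in\cH_n(\Gamma)\}$ is finite; therefore its maximum exists and is finite.

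I expect the main obstacle to be the claim that the Zariski closure of a Hitchin representation lies in only finitely many conjugacy classes of subgroups of $\sPSL(n,\R)$. This needs the finiteness of closed connected reductive subgroups of $\sPSL(n,\R)$ acting irreducibly on $\R^n$ up to conjugacy — a statement which is standard (it follows, for instance, from the finiteness of conjugacy classes of connected semisimple subgroups together with the fact that an irreducible $n$-dimensional representation pins down the isomorphism type of the subgroup and only finitely many embeddings realize it) but which must be cited carefully; one convenient route is to observe that since all these subgroups contain a principal $\sPSL(2,\R)$ and are constrained by acting irreducibly in dimension $n$, the list is explicitly finite (indeed it is classified). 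Alternatively, one sidesteps the issue entirely: one can first invoke Proposition \ref{prop.linfinite}(2) applied with $\sG=\sPSL(n,\R)$ to get a finite bound $m_0=\max\{M_\sY\mid\sY\in\cS\}$ for the \emph{ambient} case, and then note that each $\sG(\rho,\eta)_\C$ is cut out inside $\sPSL(n,\C)\times\sPSL(n,\C)$, so $\aleph(\sG(\rho,\eta)_\C)\cap\sZ_\cP$ has degree and number of components bounded in terms of the ambient group and $\sZ_\cP$ alone (via the Generalized Bézout Theorem used in Lemma \ref{lem:ineq}), giving a uniform bound on $M$ without classifying the subgroups; I would present this Bézout-based uniform bound as the clean argument and relegate the classification remark to a side comment.
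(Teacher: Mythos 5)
Your main argument (the first two paragraphs) follows essentially the same route as the paper: reduce to finitely many conjugacy classes for $\sG_\rho,\sG_\eta$, then apply Goursat's Lemma together with the finiteness of normal subgroups and the Lie-algebra rigidity of smooth isomorphisms to bound the possible $\sG(\rho,\eta)$ up to a conjugation-invariant equivalence, then observe that $M_\sX$ depends only on $(\irr,\mdeg,d)$ and these are conjugation invariants. The paper pins down the missing finiteness step by citing that Hitchin Zariski closures are semisimple with trivial center (Lemma~2.19 of \cite{BCLS}) and contain a principal $\tF_n(\sPSL(2,\R))$ (Guichard \cite{Gui}, also Corollary~1.5 of \cite{SambaZar}), which gives the explicit finite list; your ``side comment'' route via the principal $\sPSL(2,\R)$ is exactly this. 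You are vaguer about the semisimplicity of the closure (which does not follow directly from Anosovness without the \cite{BCLS} input), but the skeleton is correct.

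However, the ``clean'' Bézout-based shortcut you propose as the preferred final version is not valid. The Generalized Bézout Theorem bounds $\mdeg$ and $\irr$ of an intersection $A\cap B$ in terms of the degrees of $A$ and $B$; it says nothing about bounding the degree of $A=\aleph(\sG(\rho,\eta)_\C)$ itself merely because it sits inside the fixed ambient group $\sPSL(n,\C)\times\sPSL(n,\C)$. Algebraic subgroups of a fixed linear algebraic group can have arbitrarily large degree (already one-parameter tori $\{(t,t^k)\}\subset(\C^\times)^2$ have unbounded degree as $k$ grows), so there is no uniform bound on $\deg\bigl(\sG(\rho,\eta)_\C\bigr)$ coming from the ambient group alone. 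Also, Proposition~\ref{prop.linfinite}(2) is formulated for graphs of isomorphisms between quotients of the \emph{ambient} $\sG$, i.e.\ for the Zariski-dense case, and does not cover the proper subgroups $\sG(\rho,\eta)\subsetneq\sPSL(n,\R)\times\sPSL(n,\R)$ that arise when $\rho$ or $\eta$ is not Zariski dense. The classification step (finitely many possible $\sG_\rho$, $\sG_\eta$ up to conjugacy, via Guichard or an equivalent) is therefore genuinely needed and cannot be replaced by a degree argument; it is what makes $\{\deg\sG(\rho,\eta)_\C\}$ finite in the first place.
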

\begin{proof}
Let $\sG_\rho$, $\sG_\eta$ and $\sG(\rho,\eta)$ be as in Remark \ref{rem.zclo}. We use Lemma 2.19 of \cite{BCLS} and Theorem 1.5 of \cite{Labourie} to conclude that $\sG_\rho,\sG_\eta\subset\sPSL(n,\R)$ are semisimple Lie groups with trivial center. Moreover, by a recent result by Guichard \cite{Gui} we know that both $\sG_\rho$ and $\sG_\eta$ contain $\tF_n(\sPSL(2,\R))$ and hence there are finitely many options available for $\sG_\rho$ and $\sG_\eta$ (see also Corollary 1.5 of \cite{SambaZar}).

Let $\pi_i: \sG_\rho\times\sG_\eta\to\sG_i$ denote the natural projection map for $i\in\{\rho,\eta\}$. Hence, $\pi_i(\sG(\rho,\eta))=\sG_i$ for all $i\in\{\rho,\eta\}$. We denote the kernels of $\left.\pi_\rho\right|_{\sG(\rho,\eta)}$, $\left.\pi_\eta\right|_{\sG(\rho,\eta)}$ by $\sN_\rho$ and $\sN_\eta$ respectively. We observe that 
\begin{align*}
    \sN_\rho&=\sG(\rho,\eta)\cap(\{e\}\times\sG_\eta),\\
    \sN_\eta&=\sG(\rho,\eta)\cap(\sG_\rho\times\{e\}).
\end{align*}
It follows that $\sN_\rho$ is normal inside $\{e\}\times\sG_\eta$ and $\sN_\eta$ is normal inside $\sG_\rho\times\{e\}$. Hence, using Goursat's Lemma \cite{Gour} we obtain that the image of $\sG(\rho,\eta)$ inside $(\sG_\rho\times\sG_\eta)/\sN_\rho\sN_\eta$ is the graph of an isomorphism $\sigma:\sG_\rho/\sN_\eta\to\sG_\eta/\sN_\rho$. Moreover, as $\pi_\rho$, $\pi_\eta$ are smooth and $\sG(\rho,\eta)$ is an algebraic group, we deduce that $\sigma$ is smooth. As $\sG_\rho$ and $\sG_\eta$ are semisimple Lie groups with finite center and without compact factors (see Lemma 2.19 of \cite{BCLS}) we obtain that the number of possible options for $\sN_\rho$ and $\sN_\eta$ are finite. Furthermore, we note that any smooth isomorphism between two connected Lie groups is linear and any invertible linear transformation does not change the number of irreducible components, the degrees and the dimensions of an algebraic variety. Hence $\{M_{\aleph(\sG(\rho,\eta)_\C)\cap\sZ_\cP}\mid \rho,\eta\in\cH_n(\Gamma)\}$ is a finite set and we conclude that $\max\{M_{\aleph(\sG(\rho,\eta)_\C)\cap\sZ_\cP}\mid \rho,\eta\in\cH_n(\Gamma)\}$ exists and is finite.
\end{proof}

\begin{theorem}
Let $\Gamma$ be the fundamental group of a compact surface and let $\sS\subset\Gamma$ be a finite symmetric generating set. Then, for $m=\max\{M_{\sG(\rho,\eta)}\mid \rho,\eta\in\cH_n(\Gamma)\}$ and two representations $\rho_1,\rho_2\in\cH_n(\Gamma)$ which satisfy the following: $\lambda(\rho_1(\gamma))=\lambda(\rho_2(\gamma))$ for all $\gamma \in \sB_\sS(m)$, are equivalent; in other words, there exists an automorphism $\sigma:\sG \to \sG$ such that we have $\sigma \circ \rho_1=\rho_2$.
\end{theorem}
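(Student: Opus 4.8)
The plan is to repeat the escape-from-subvarieties argument of Theorems~\ref{thm2} and~\ref{thm.hit1} essentially verbatim; the point is that once the Zariski density hypothesis is dropped, the Zariski closures entering the argument are no longer all of $\sPSL(n,\R)$ but range over a finite collection that the preceding Proposition was tailored to control.

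First I would assemble the structural input. Given $\rho_1,\rho_2\in\cH_n(\Gamma)$, write $\sG_1,\sG_2\subset\sPSL(n,\R)$ for the Zariski closures of $\rho_1(\Gamma),\rho_2(\Gamma)$ and $\sX:=\sG(\rho_1,\rho_2)$ for the Zariski closure of $(\rho_1,\rho_2)(\Gamma)$, as in Remark~\ref{rem.zclo}. By Lemma~10.1 of \cite{Labourie}, Lemma~2.19 of \cite{BCLS}, and Guichard's classification \cite{Gui}, each of $\sG_1,\sG_2$ is a connected split semisimple algebraic subgroup of $\sPSL(n,\R)$ with trivial center and without compact factors, and only finitely many such subgroups arise. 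The Goursat analysis already carried out in the preceding Proposition supplies normal subgroups $\sN_1\trianglelefteq\sG_1$, $\sN_2\trianglelefteq\sG_2$ and a smooth, hence linear, isomorphism $\sigma\colon\sG_1/\sN_1\to\sG_2/\sN_2$ with
\[\sX=\{(g,h)\mid h\sN_2=\sigma(g\sN_1)\}.\]
Since the surface has negative Euler characteristic, $\Gamma$ is torsion free, and since Hitchin representations are Anosov with respect to the minimal parabolic of $\sPSL(n,\R)$, every nontrivial $\rho_i(\gamma)$ is loxodromic in $\sPSL(n,\R)$. Finally, writing $m$ for the finite constant $\max\{M_{\aleph(\sG(\rho,\eta)_\C)\cap\sZ_\cP}\mid\rho,\eta\in\cH_n(\Gamma)\}$ furnished by the preceding Proposition, we have $M_{\aleph(\sX_\C)\cap\sZ_\cP}\le m$, where $\sX_\C$ denotes the complexification of $\sX$.

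Next I would run the escape step exactly as in Theorem~\ref{thm2}. Because $\sPSL(n,\R)$ is itself split with trivial center, Lemma~\ref{lem.eqjor} applies with $\sG=\sPSL(n,\R)$, and together with loxodromicity the hypothesis $\lambda(\rho_1(\gamma))=\lambda(\rho_2(\gamma))$ for all $\gamma\in\sB_\sS(m)$ yields $\aleph(\rho_1(\gamma),\rho_2(\gamma))\in\sZ_\cP\cap\aleph(\sX_\C)$ for all such $\gamma$, in particular for all $\gamma\in\sB_\sS(M_{\aleph(\sX_\C)\cap\sZ_\cP})$. By Proposition~\ref{prop.main} this forces $\sZ_\cP\cap\aleph(\sX_\C)$ not to be a proper subvariety of $\aleph(\sX_\C)$, i.e. $\aleph(\sX_\C)\subseteq\sZ_\cP$. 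I would then rerun the proof of Lemma~\ref{lem.nonempjor} in this slightly more general setting: if $\sN_1$ were nontrivial, then, being a nontrivial normal subgroup of a semisimple group without compact factors, it would contain a hyperbolic element $g$ with $\cP(\aleph(g,e))\ne0$, contradicting $(g,e)\in\sX\subseteq\sZ_\cP$; symmetrically for $\sN_2$. Hence $\sN_1$ and $\sN_2$ are trivial, so $\sigma\colon\sG_1\to\sG_2$ is an isomorphism of algebraic groups with $\rho_2=\sigma\circ\rho_1$.

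The remaining step, which I expect to be the only genuinely delicate one, is to promote $\sigma$ to an automorphism of $\sG=\sPSL(n,\R)$. Here one uses that $\sG_1$ and $\sG_2$, being abstractly isomorphic Zariski closures of Hitchin representations, are the images of irreducible representations taken from the finite list of Guichard's theorem and both contain the principal $\sPSL(2,\R)$; this forces $\sG_1$ and $\sG_2$ to be conjugate inside $\sPSL(n,\R)$, and since each proper member of that list has no outer automorphisms, $\sigma$ is in fact realized by an inner automorphism of $\sPSL(n,\R)$ (and, in the Zariski dense case, $\sigma$ is already an automorphism of $\sPSL(n,\R)$). This yields the desired automorphism with $\sigma\circ\rho_1=\rho_2$, so $\rho_1$ and $\rho_2$ are equivalent. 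Apart from this last point, and the routine verification that the finitely many possible Zariski closures really are split semisimple with trivial center and no compact factors, the argument is word for word that of Theorem~\ref{thm.hit1}, with the uniform bound coming from the preceding Proposition in place of Proposition~\ref{prop.linfinite}.
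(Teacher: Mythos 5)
Your argument follows the paper's proof essentially step for step on the core chain: use Proposition~\ref{prop.main} with the uniform bound from the preceding proposition to force $\aleph(\sG(\rho_1,\rho_2)_\C)\subset\sZ_\cP$, apply the Goursat decomposition, and kill the kernels $\sN_\rho,\sN_\eta$ by exhibiting an element of a putative nontrivial normal subgroup whose square has adjoint trace different from $n^2-1$. The only cosmetic difference in those steps is that the paper performs the Goursat analysis on the complexified groups $(\sG_\rho)_\C,(\sG_\eta)_\C$ whereas you run it on the real ones; and you use a hyperbolic element where the paper uses a semisimple one with $\ttr(g^2)\neq n^2-1$, which is the same trick.

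Where you diverge is the final paragraph. The paper's own proof stops at the Goursat isomorphism $\sigma\colon\sG_\rho\to\sG_\eta$ between the Zariski closures, closing with a brief invocation of Lemma~2.19 of \cite{BCLS} and Goursat's lemma, and does not carry out the promotion to an automorphism of the ambient $\sPSL(n,\R)$; you are right that this is the genuinely delicate point, and you attempt to fill it. However, the justification you give is incorrect as stated: it is not true that every proper Zariski closure in Guichard's list has trivial outer automorphism group. The principal $\sPSL(2,\R)$ has $\mathrm{Out}\cong\Z/2\Z$ (conjugation by an element of $\mathrm{PGL}(2,\R)\setminus\sPSL(2,\R)$ is outer), and the even split orthogonal groups admit the $D_m$ diagram involution. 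What one actually needs is that the specific isomorphism $\sigma\colon\sG_\rho\to\sG_\eta$ coming from Goursat is realized by an automorphism of $\sPSL(n,\R)$ — which requires checking, case by case over Guichard's list, that the relevant outer automorphisms of the closure extend to $\sPSL(n,\R)$ compatibly with the embedding — rather than the blanket claim you make. Apart from this point, your argument and the paper's coincide.
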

\begin{proof}
Let $\sG_\rho$, $\sG_\eta$ and $\sG(\rho,\eta)$ be as in Remark \ref{rem.zclo}. As $\aleph$ is a regular embedding, we observe that $\sZ_\cP\cap\aleph(\sG(\rho,\eta)_\C)$ is a subvariety of $\sGL(2n^2-2,\C)$. Moreover, we have $M_{\aleph(\sG(\rho,\eta)_\C)\cap\sZ_\cP}\leq m$. We use Lemma \ref{lem.eqjor} and observe that $\aleph(\rho(\gamma),\eta(\gamma))\in\sZ_\cP\cap\aleph(\sG(\rho,\eta)_\C)$ for all $\gamma\in\sB_\sS(m)$. Hence $\aleph(\rho(\gamma),\eta(\gamma))\in\sZ_\cP\cap\aleph(\sG(\rho,\eta)_\C)$ for all $\gamma\in\sB_\sS(M_{\aleph(\sG(\rho,\eta)_\C)\cap\sZ_\cP})$. Therefore, by Proposition \ref{prop.main} we obtain that $\sZ_\cP\cap\aleph(\sG(\rho,\eta)_\C)$ is not a proper subvariety of $\aleph(\sG(\rho,\eta)_\C)$. It follows that $\aleph(\sG(\rho,\eta)_\C)\subset\sZ_\cP$. 

We observe that $\pi_i(\sG(\rho,\eta)_\C)=(\sG_i)_\C$ and consider $\sN_i:=\ker(\left.\pi_i\right|_{\sG(\rho,\eta)_\C})$ for $i\in\{\rho,\eta\}$. It follows that $\sN_\rho$ is normal inside
$\{e\}\times(\sG_\eta)_\C$ and $\sN_\eta$ is normal inside
$\{e\}\times(\sG_\rho)_\C$. Moreover, we deduce that the image of $\sG(\rho,\eta)_\C$ inside $((\sG_\rho)_\C\times(\sG_\eta)_\C)/\sN_\rho\sN_\eta$ is the graph of a smooth isomorphism \[\sigma:(\sG_\rho)_\C/\sN_\eta\to(\sG_\eta)_\C/\sN_\rho.\] 

We claim that both $\sN_\rho$ and $\sN_\eta$ are trivial. Indeed, if not then either $\sN_\rho$ or $\sN_\eta$ is non trivial. Hence, either there exists a semisimple element $g\in\sN_\rho$ such that $\ttr(g^2)\neq(n^2-1)$ or there exists a semisimple element $h\in\sN_\eta$ such that $\ttr(h^2)\neq(n^2-1)$. It follows that either $\aleph(e,h)\notin\sZ_\cP$ or $\aleph(g,e)\notin\sZ_\cP$, a contradiction to the fact that $\aleph(\sG(\rho,\eta)_\C)\subset\sZ_\cP$.

Finally, we use Lemma 2.19 of \cite{BCLS} and Goursat's lemma \cite{Gour} to conclude the proof.
\end{proof}

%%%%%%%%%%%%%%%%%%%%%%%%%%%%%%%%%%%%%%%%%%%%%%%%%%%%%%%%%%%%%%%%%%%%%%%%%%%%%%%%%%%%%%%%%
\subsection{Margulis-Smilga spacetimes}
%%%%%%%%%%%%%%%%%%%%%%%%%%%%%%%%%%%%%%%%%%%%%%%%%%%%%%%%%%%%%%%%%%%%%%%%%%%%%%%%%%%%%%%%%

In this subsection we define the notion of a Margulis-Smilga spacetime and show that finite step rigidity of Margulis-Smilga invariant spectra hold for these representations when the Margulis-Smilga spacetime is coming from an irreducible representation of a split semisimple Lie group of noncompact type and with trivial center.

Let $\Gamma$ be a hyperbolic group, let $\sG$ be a semisimple Lie group of non compact type, let $\sV$ be a finite dimensional vector space, let $\tR:\sG\to\sGL(\sV)$ be a reducible representation. We denote the affine group coming from this representation by $\sG\ltimes_\tR\sV$.

\begin{definition}
Let $\rho:\Gamma\to\sG\ltimes_\tR\sV$ be an injective homomorphism such that $\rho(\gamma)$ is loxodromic for all non-identity element $\gamma\in\Gamma$. Then $\rho$ is called a Margulis-Smilga spacetime if and only if $\rho(\Gamma)$ acts properly on $\sV$.
\end{definition}

\begin{remark}\label{rem.nonzero}
We observe that, if $\tM(\rho(\gamma))=0$ for some $\gamma\in\Gamma$, then there exists $X_n\in\sV$ such that $X_n$ converge to some point $X\in\sV$ and $\rho(\gamma^n)X_n$ converge to some point $Y\in\sV$ as $n\to\infty$. Hence the Margulis-Smilga invariant spectrum of a Margulis-Smilga spacetime never vanishes.
\end{remark}

We denote the space of all Margulis-Smilga spacetimes as follows:
\[\cM(\Gamma,\sG\ltimes_\tR\sV):=\{\rho:\Gamma\to\sG\ltimes_\tR\sV\mid\rho(\Gamma) \text{ acts properly on } \sV \}/(\sG\ltimes_\tR\sV).\]
Moreover, we denote the subspace of $\cM(\Gamma,\sG\ltimes_\tR\sV)$, consisting of representations whose linear part is Zariski dense in $\sG$, by $\cM^{zdl}(\Gamma,\sG\ltimes_\tR\sV)$ and the subspace consisting of representations, whose linear part is inside the conjugacy class $[\varrho]$ for some representation $\varrho:\Gamma\to\sG$, by $\cM_{[\varrho]}(\Gamma,\sG\ltimes_\tR\sV)$.

\begin{theorem}
Let $\sG$ be a connected real split semisimple algebraic Lie group with trivial center and without compact factors. Let $\Gamma$ be a finitely generated group with a finite symmetric generating set $\sS \subset \Gamma$. Then, for $m=M_{\sH_\C\cap\sZ_\cR}$ and two representations $\rho_1,\rho_2\in\cM_{[\varrho]}(\Gamma,\sG\ltimes_\tR\sV)$ which satisfy the following:
\begin{enumerate}
    \item $\varrho(\Gamma)$ is Zariski dense in $\sG$,
    \item $\tM(\rho_1(\gamma))=\tM(\rho_2(\gamma))$ for all $\gamma \in \sB_\sS(m)$,
\end{enumerate}
are equivalent; in other words, there exists $X\in\sV$ such that we have $(g,X)^{-1}\rho_1(g,X)=\rho_2$.
\end{theorem}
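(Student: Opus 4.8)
The plan is to reduce this statement directly to Theorem~\ref{thm3}. The key observation is that $\cM_{[\varrho]}(\Gamma,\sG\ltimes_\tR\sV)$ consists precisely of those Margulis-Smilga spacetimes whose linear part lies in the conjugacy class of $\varrho$, and a Margulis-Smilga spacetime is by definition an injective homomorphism $\rho:\Gamma\to\sG\ltimes_\tR\sV$ with $\rho(\gamma)$ loxodromic for every non-identity $\gamma\in\Gamma$ such that $\rho(\Gamma)$ acts properly on $\sV$. So I first unpack the definitions: given $\rho_1,\rho_2\in\cM_{[\varrho]}(\Gamma,\sG\ltimes_\tR\sV)$, their linear parts $\tL_{\rho_1}$ and $\tL_{\rho_2}$ are both conjugate to $\varrho$, hence conjugate to each other; that is, $\tL_{\rho_1}=g\,\tL_{\rho_2}\,g^{-1}$ for some $g\in\sG$. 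This verifies hypothesis (1) of Theorem~\ref{thm3}.

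Next I would check the remaining hypotheses of Theorem~\ref{thm3}. Hypothesis (2), that $\tL_{\rho_i}(\Gamma)$ is Zariski dense in $\sG$, follows immediately from assumption (1) of the present theorem, namely that $\varrho(\Gamma)$ is Zariski dense in $\sG$, together with the fact that Zariski density is invariant under conjugation. Hypothesis (3), that $\tL_{\rho_i}(\gamma)$ is loxodromic for all $\gamma\in\Gamma$, is part of the definition of a Margulis-Smilga spacetime: since $\rho_i(\gamma)$ is loxodromic in $\sG\ltimes_\tR\sV$, its linear part $\tL_{\rho_i}(\gamma)$ is loxodromic in $\sG$ (this is exactly the sense in which loxodromicity of an affine element is defined via its linear part in Subsection~\ref{subsec.marglin}). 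Hypothesis (4), the matching of Margulis-Smilga invariants $\tM(\rho_1(\gamma))=\tM(\rho_2(\gamma))$ on $\sB_\sS(m)$, is assumption (2) of the present theorem, and the value $m=M_{\sH_\C\cap\sZ_\cR}$ is identical in both statements.

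Having verified all four hypotheses, Theorem~\ref{thm3} applies and yields an element $X\in\sV$ with $(g,X)^{-1}\rho_1(g,X)=\rho_2$, where $g$ is the conjugating element of the linear parts found in the first step. This is exactly the conclusion required. I would also remark that assumption (4) of Theorem~\ref{thm3} is genuinely satisfied here rather than vacuous: by Remark~\ref{rem.nonzero} the Margulis-Smilga invariant spectrum of a Margulis-Smilga spacetime never vanishes, so there is real content in comparing the two spectra on the finite ball $\sB_\sS(m)$.

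I do not expect a serious obstacle: the proof is essentially a bookkeeping argument translating the intrinsic definition of $\cM_{[\varrho]}(\Gamma,\sG\ltimes_\tR\sV)$ into the list of hypotheses of Theorem~\ref{thm3}. If there is any subtlety, it lies in confirming that ``linear part loxodromic'' as used in the Jordan-decomposition sense of Subsection~\ref{subsec.marglin} is the condition actually imposed in the definition of a Margulis-Smilga spacetime, and that the conjugating element $g\in\sG$ produced at the level of linear parts is the same $g$ that enters Theorem~\ref{thm3}; both are straightforward once the definitions are laid side by side. Thus the proof will be short, amounting to: extract $g$ from the conjugacy of linear parts, invoke Theorem~\ref{thm3}, and conclude.
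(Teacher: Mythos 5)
Your proposal matches the paper's own proof, which is precisely the one-line reduction: the paper states that the result follows from Remark~\ref{rem.nonzero} and Theorem~\ref{thm3}. Your verification of the four hypotheses of Theorem~\ref{thm3} — extracting $g$ from the conjugacy of linear parts, inheriting Zariski density and loxodromicity from the definition of a Margulis-Smilga spacetime, and noting the invariant spectrum is nonvanishing by Remark~\ref{rem.nonzero} — is exactly the bookkeeping the paper leaves implicit.
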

\begin{proof}
Our result follows from Remark \ref{rem.nonzero} and Theorem \ref{thm3}.
\end{proof}

\begin{theorem}\label{thm.marg}
Let $\sG$ be a connected real split semisimple algebraic Lie group with trivial center and without compact factors, let $\tR$ preserves a norm $\|\cdot\|_\tR$ on $\sV$. Let $\Gamma$ be a finitely generated group with a finite symmetric generating set $\sS \subset \Gamma$ and let $\cS$ be as in Proposition \ref{prop.affinite}. Then, for $m=\max\{M_{\aleph(\sY_\C)\cap\sZ_\cE}\mid\sY\in\cS\}$ and two representations $\rho_1,\rho_2\in\cM^{zdl}(\Gamma,\sG\ltimes_\tR\sV)$ which satisfy the following: $\|\tM(\rho_1(\gamma))\|_\tR=\|\tM(\rho_2(\gamma))\|_\tR$ for all $\gamma \in \sB_\sS(m)$, are equivalent, i.e. there exists $(\tA,X)\in\sGL(\sV)\ltimes\sV$ such that we have $(\tA,X)^{-1}\rho_1(\tA,X)=\rho_2$.
\end{theorem}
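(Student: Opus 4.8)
The plan is to reduce this statement directly to Theorem \ref{thm4}, which already does almost all of the work; the only genuine issue is ruling out the first alternative in the conclusion of Theorem \ref{thm4}, namely that $\rho_1$ and $\rho_2$ are Zariski dense inside some conjugate of $\sG$ rather than being conjugate to one another. First I would observe that the hypotheses match: by definition a representation in $\cM^{zdl}(\Gamma,\sG\ltimes_\tR\sV)$ is an injective homomorphism with loxodromic image whose linear part $\tL_{\rho_i}(\Gamma)$ is Zariski dense in $\sG$, so conditions (1) and (2) of Theorem \ref{thm4} hold automatically, and condition (3) is exactly the hypothesis $\|\tM(\rho_1(\gamma))\|_\tR=\|\tM(\rho_2(\gamma))\|_\tR$ for all $\gamma\in\sB_\sS(m)$ with the same constant $m=\max\{M_{\aleph(\sY_\C)\cap\sZ_\cE}\mid\sY\in\cS\}\cup\{M_{\sH_\C\cap\sZ_{\|\cR\|^2}}\}$. (One should note that the $m$ in Theorem \ref{thm.marg} as stated omits the $M_{\sH_\C\cap\sZ_{\|\cR\|^2}}$ term, so either the statement should be corrected to include it, or one argues that for a genuine Margulis-Smilga spacetime this term is irrelevant.)

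The key step is then to eliminate the degenerate alternative. By Remark \ref{rem.nonzero}, for any Margulis-Smilga spacetime $\rho$ the Margulis-Smilga invariant spectrum never vanishes: there is some $\gamma_0\in\Gamma$ with $\tM(\rho_1(\gamma_0))\neq 0$. The subtlety is that $\gamma_0$ need not lie in $\sB_\sS(m)$, so I would argue as follows. If $\rho_1(\Gamma)$ were Zariski dense inside a conjugate of $\sG$ — equivalently, if the Zariski closure $\sX_1$ of $\rho_1(\Gamma)$ in $\sH$ were conjugate to $\sG$ — then, writing $\sX_1=(\tI,X)(\sG,0)(\tI,X)^{-1}$ via Whitehead's Lemma as in the proof of Theorem \ref{thm4}, every element of $\rho_1(\Gamma)$ would be conjugate in $\sG\ltimes_\tR\sV$ to an element of $\sG$, forcing $\tM(\rho_1(\gamma))=0$ for all $\gamma\in\Gamma$, contradicting Remark \ref{rem.nonzero}. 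Hence this alternative cannot occur for Margulis-Smilga spacetimes, and Theorem \ref{thm4} yields $(\tA,X)\in\sGL(\sV)\ltimes\sV$ with $(\tA,X)^{-1}\rho_1(\tA,X)=\rho_2$.

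The main obstacle I anticipate is precisely the bookkeeping around the constant $m$ and the first bullet of Theorem \ref{thm4}'s conclusion: one must be careful that when $\|\tM(\rho_1(\gamma))\|_\tR=0$ for all $\gamma\in\sB_\sS(m)$, the argument of the first part of the proof of Theorem \ref{thm4} (the one needing $M_{\sH_\C\cap\sZ_{\|\cR\|^2}}$) is what produces the conjugate-to-$\sG$ conclusion, and this is exactly the case excluded by Remark \ref{rem.nonzero}. So the cleanest writeup applies Remark \ref{rem.nonzero} up front to assert $\|\tM(\rho_1(\gamma))\|_\tR\neq 0$ for some $\gamma\in\Gamma$, notes that were the common value zero on all of $\sB_\sS(m)$ we would be in the degenerate case of Theorem \ref{thm4} and hence $\rho_1(\Gamma)$ conjugate to a subgroup of $\sG$ with vanishing Margulis-Smilga spectrum — a contradiction — and therefore we are in the second case of Theorem \ref{thm4}, which gives the desired conjugacy. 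Everything else is a direct invocation of Theorem \ref{thm4} and Proposition \ref{prop.affinite}.
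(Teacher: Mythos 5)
Your proposal is correct and follows the same route as the paper, which proves Theorem~\ref{thm.marg} by citing exactly Remark~\ref{rem.nonzero} and Theorem~\ref{thm4}. You also correctly flag that the constant $m$ in Theorem~\ref{thm.marg} omits the term $M_{\sH_\C\cap\sZ_{\|\cR\|^2}}$ that appears in the constant of Theorem~\ref{thm4}, and you correctly resolve that this omission is harmless for genuine Margulis--Smilga spacetimes.

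One simplification worth noting: the ``subtlety'' you isolate (that $\gamma_0$ with $\tM(\rho_1(\gamma_0))\neq 0$ need not lie in $\sB_\sS(m)$) is not actually there. Remark~\ref{rem.nonzero} says the spectrum \emph{never} vanishes, i.e.\ $\tM(\rho_1(\gamma))\neq 0$ for every nontrivial $\gamma\in\Gamma$; in particular $\tM(\rho_1(s))\neq 0$ for any generator $s\in\sS\subset\sB_\sS(m)$. So the degenerate case $\|\tM(\rho_1(\gamma))\|_\tR = 0$ for all $\gamma\in\sB_\sS(m)$ simply cannot arise, and one lands directly in the second part of the proof of Theorem~\ref{thm4}, which only requires $m\ge\max\{M_{\aleph(\sY_\C)\cap\sZ_\cE}\mid\sY\in\cS\}$ — exactly the constant stated. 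Your contradiction route via Whitehead's Lemma, besides being unnecessary, is also slightly circular with the stated constant: deducing ``$\rho_1(\Gamma)$ conjugate to a subgroup of $\sG$'' from vanishing on $\sB_\sS(m)$ is the first part of the proof of Theorem~\ref{thm4}, which does require the $M_{\sH_\C\cap\sZ_{\|\cR\|^2}}$ bound. The direct observation above sidesteps this entirely and justifies the smaller constant cleanly.
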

\begin{proof}
Our result follows from Remark \ref{rem.nonzero} and Theorem \ref{thm4}.
\end{proof}
\begin{corollary}
Let $\sG$ be a connected real split semisimple algebraic Lie group with trivial center and without compact factors, let $\tR$ preserves a norm $\|\cdot\|_\tR$ on $\sV$. Let $\Gamma$ be a finitely generated group with a finite symmetric generating set $\sS \subset \Gamma$ and let $\cS$ be as in Proposition \ref{prop.affinite}. Then, for $m=\max\{M_{\aleph(\sY_\C)\cap\sZ_\cE}\mid\sY\in\cS\}$ and two representations $\rho_1,\rho_2\in\cM^{zdl}(\Gamma,\sG\ltimes_\tR\sV)$ which satisfy the following: $\tM(\rho_1(\gamma))=\tM(\rho_2(\gamma))$ for all $\gamma \in \sB_\sS(m)$, are equivalent, i.e. there exists $(\tA,X)\in\sGL(\sV)\ltimes\sV$ such that we have $(\tA,X)^{-1}\rho_1(\tA,X)=\rho_2$.
\end{corollary}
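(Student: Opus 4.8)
The plan is to obtain this corollary as an immediate consequence of Theorem \ref{thm.marg}. The statements are identical except that here the hypothesis on the Margulis-Smilga invariants is the equality of vectors $\tM(\rho_1(\gamma))=\tM(\rho_2(\gamma))$ rather than the a priori weaker equality of norms $\|\tM(\rho_1(\gamma))\|_\tR=\|\tM(\rho_2(\gamma))\|_\tR$, and the constant $m=\max\{M_{\aleph(\sY_\C)\cap\sZ_\cE}\mid\sY\in\cS\}$ is the same in both.

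First I would record the trivial observation that if two vectors of $\sV$ coincide then so do their $\|\cdot\|_\tR$-norms. Applying this to $\tM(\rho_1(\gamma))$ and $\tM(\rho_2(\gamma))$ for each $\gamma\in\sB_\sS(m)$, the hypothesis $\tM(\rho_1(\gamma))=\tM(\rho_2(\gamma))$ forces $\|\tM(\rho_1(\gamma))\|_\tR=\|\tM(\rho_2(\gamma))\|_\tR$ for all $\gamma\in\sB_\sS(m)$. Thus $\rho_1,\rho_2\in\cM^{zdl}(\Gamma,\sG\ltimes_\tR\sV)$ satisfy every hypothesis of Theorem \ref{thm.marg}: their linear parts are Zariski dense in $\sG$ by membership in $\cM^{zdl}$, the representation $\tR$ preserves the norm $\|\cdot\|_\tR$, and the normed Margulis-Smilga invariants agree on the ball of radius $m$. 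Theorem \ref{thm.marg} then produces $(\tA,X)\in\sGL(\sV)\ltimes\sV$ with $(\tA,X)^{-1}\rho_1(\tA,X)=\rho_2$, which is precisely the asserted conclusion.

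There is no real obstacle here; the corollary is a formal strengthening-of-hypothesis (equivalently, weakening-of-conclusion-free) consequence of Theorem \ref{thm.marg}. If one instead wanted a direct argument not routed through the normed version, one would mimic the proof of Theorem \ref{thm4}: since $\rho_i\in\cM^{zdl}$ has Zariski dense linear part, Remark \ref{rem.nonzero} rules out the vanishing case, so it suffices to replace $\sZ_\cE$ by the zero set of the $\C^n$-valued algebraic map sending $[\tB_{11},\tB_{12};\tB_{21},\tB_{22}]$ to $\tP_{\tA_{11}}(0)\tP_{\tA_{22}}(\tI-\tA_{22})X_{22}-\tP_{\tA_{22}}(0)\tP_{\tA_{11}}(\tI-\tA_{11})X_{11}$, run the escape-from-subvarieties argument of Proposition \ref{prop.main} with this variety, and finish via Proposition \ref{prop.affinite} and Proposition 8.5 of \cite{Ghosh5} exactly as in Theorem \ref{thm4}. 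But since this merely reproves what Theorem \ref{thm.marg} already supplies, the two-line deduction above is the cleaner route.
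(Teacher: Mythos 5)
Your proposal is correct and matches the paper's own argument: both observe that equality of the vectors $\tM(\rho_1(\gamma))=\tM(\rho_2(\gamma))$ on $\sB_\sS(m)$ immediately yields equality of their $\|\cdot\|_\tR$-norms there, and then invoke Theorem \ref{thm.marg} to produce the conjugating element $(\tA,X)$. The optional direct route you sketch is unnecessary but harmless; the two-line deduction is exactly what the paper does.
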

\begin{proof}
We observe that $\tM(\rho_1(\gamma))=\tM(\rho_2(\gamma))$ for all $\gamma \in \sB_\sS(m)$ imply that $\|\tM(\rho_1(\gamma))\|_\tR=\|\tM(\rho_2(\gamma))\|_\tR$ for all $\gamma \in \sB_\sS(m)$. Hence, our result follows from Theorem \ref{thm.marg}.
\end{proof}

\bibliography{Library.bib}
\bibliographystyle{alpha}

\end{document}